\documentclass[11pt]{article}
\usepackage[T1]{fontenc}
\usepackage{graphicx}
\usepackage{amsmath}
\usepackage{amsthm}
\usepackage{amssymb}
\usepackage{stmaryrd}
\usepackage{todonotes}
\usepackage[curve]{xypic}
\usepackage[left=3.2cm,right=3.2cm,top=3cm,bottom=3cm]{geometry} 
\usepackage{tikz}
\usetikzlibrary{calc, decorations.markings, fit, external, chains, graphs, intersections,through,backgrounds, calc}
\usepackage{tikz-cd}
\usepackage{wrapfig}
\usepackage{enumerate}
\usepackage[pagebackref]{hyperref}

\definecolor{dark-red}{rgb}{0.5,0.15,0.15}
\definecolor{dark-blue}{rgb}{0.15,0.15,0.6}
\definecolor{dark-green}{rgb}{0.15,0.6,0.15}
\hypersetup{
	colorlinks, linkcolor=dark-red,
	citecolor=dark-blue, urlcolor=dark-green
}
\usepackage[nameinlink,capitalise,noabbrev]{cleveref}

\pagestyle{myheadings} \markright{\hspace{5cm}}
\numberwithin{equation}{section}
\newtheorem{thm}[equation]{Theorem}

\newtheorem*{thm*}{Theorem}
\newtheorem*{mthm*}{Main Theorem}

\newtheorem{lemma}[equation]{Lemma}

\newtheorem{cor}[equation]{Corollary}

\newtheorem{prop}[equation]{Proposition}

\newtheorem*{conjecture*}{Conjecture}

\newtheorem*{question*}{Question}

\theoremstyle{definition}
\newtheorem{defi}[equation]{Definition}

\newtheorem{notation}[equation]{Notation}

  \newtheorem*{example*}{Example}

  \newtheorem*{exe*}{Exercise}

\theoremstyle{remark}

\newtheorem{remark}[equation]{Remark}

\newtheorem{rmk}[equation]{Remark}
\newtheorem{construction}[equation]{Construction}

\newcommand{\G}{\mathbb{G}}
\newcommand{\Z}{\mathbb{Z}}
\newcommand{\F}{\mathbb{F}}

\newcommand{\Q}{\mathbb{Q}}

\newcommand{\R}{\mathbb{R}}
\newcommand{\C}{\mathbb{C}}

\renewcommand{\S}{\mathbb{S}}

\newcommand{\CP}{\mathbb{CP}}

\newcommand{\CC}{\mathcal{C}}

\newcommand{\EE}{\mathcal{E}}

\newcommand{\LL}{\mathcal{L}}

\newcommand{\XX}{\mathcal{X}}

\newcommand{\OO}{\mathcal{O}}

\newcommand{\MM}{\mathcal{M}}

\newcommand{\MMb}{\overline{\mathcal{M}}}

\newcommand{\tensor}{\otimes}

\DeclareMathOperator{\pt}{pt}

\DeclareMathOperator{\Sp}{Sp}

\DeclareMathOperator{\Spf}{Spf}

\DeclareMathOperator{\Ab}{Ab}

\DeclareMathOperator{\id}{id}

\DeclareMathOperator{\pr}{pr}

\DeclareMathOperator{\Tor}{Tor}

\DeclareMathOperator{\Ext}{Ext}
\DeclareMathOperator{\Hom}{Hom}
\DeclareMathOperator{\Spec}{Spec}
\DeclareMathOperator{\fib}{fib}

\DeclareMathOperator{\QCoh}{QCoh}

\DeclareMathOperator{\Mod}{Mod}

\DeclareMathOperator{\res}{res}

\DeclareMathOperator{\Sch}{Sch}

\DeclareMathOperator{\Map}{Map}
\DeclareMathOperator{\CAlg}{CAlg}

\renewcommand{\P}{\mathbb{P}}
\renewcommand{\top}{\mathrm{top}}

\DeclareMathOperator{\SL}{SL}
\DeclareMathOperator{\tmf}{tmf}
\DeclareMathOperator{\Tmf}{Tmf}
\DeclareMathOperator{\TMF}{TMF}

\begin{document}
\title{Connective Models for Topological Modular Forms of Level $n$}
\author{Lennart Meier}
\maketitle

\begin{abstract}
	The goal of this article is to construct and study connective versions of topological modular forms of higher level like $\tmf_1(n)$. In particular, we use them to realize Hirzebruch's level-$n$ genus as a map of ring spectra. 
\end{abstract}

\tableofcontents

\section{Introduction}
The basic tenet of Waldhausen's philosophy of \emph{brave new algebra} is to replace known notions for commutative rings by corresponding notions for $E_\infty$-ring spectra. These days replacing the integers by the sphere spectrum is actually no longer so brave and new, but rather a well-established principle. In extension, we might want to find and study $E_\infty$-analogues of other prominent rings as well. The aim of the present paper is to do this for rings of holomorphic modular forms with respect to congruence subgroups of $\SL_2(\Z)$.

Topological analogues of modular forms for $\SL_2(\Z)$ itself were already introduced about twenty years ago. Indeed, Goerss, Hopkins and Miller introduced three spectra $\TMF$, $\Tmf$ and $\tmf$ of topological modular forms. Recall that the rings $M_*(\SL_2(\Z);\Z)$ and $\widetilde{M}_*(\SL_2(\Z);\Z)$ of holomorphic and meromorphic integral modular forms can be defined as the global sections $H^0(\MMb_{ell}; \omega^{\tensor *})$ and $H^0(\MM_{ell}; \omega^{\tensor *})$ of powers of a certain line bundle $\omega$ on the compactified and uncompactified moduli stack of elliptic curves, respectively.\footnote{The terms \emph{meromorphic} and \emph{holomorphic} come from the corresponding analytic definitions, where one demands that the given function on the upper half plane can be continued meromorphically and holomorphically, respectively, to the cusp(s). The former kind of modular forms is also sometimes called \emph{weakly holomorphic}.} In analogy, $\TMF$ is defined as the global sections of a sheaf $\OO^{top}$ of $E_{\infty}$-ring spectra on $\MM_{ell}$ with $\pi_{2k}\OO^{top} \cong \omega^{\tensor k}$ and $\Tmf$ as the global sections of an analogous sheaf on $\MMb_{ell}$. The edge maps of the resulting descent spectral sequences take the form of homomorphisms
\begin{align*}
\pi_{2*}\TMF &\to \widetilde{M}_*(\SL_2(\Z); \Z) \text{ and}\\
\pi_{2*}\Tmf &\to M_*(\SL_2(\Z); \Z).
\end{align*}
The former morphism is an isomorphism after base change to $\Z[\frac16]$ (while taking higher cohomology of $\omega^{\tensor *}$ into account at the primes $2$ and $3$) and thus $\TMF$ can be really seen as the rightful analogue of $\widetilde{M}(\SL_2(\Z); \Z)$. In contrast, $\pi_*\Tmf$ has torsionfree summands in negative degree, whereas $M_*(\SL_2(\Z), \Z)$ is concentrated in non-negative degrees. The solution is to define $\tmf$ simply as the connective cover $\tau_{\geq 0}\Tmf$ and one can show that indeed $\pi_{2*}\tmf[\frac16]$ is isomorphic to $M_*(\SL_2(\Z), \Z[\frac16])$. We mention that one of the motivations for constructing $\tmf$ was lifting the Witten genus to a map of $E_{\infty}$-ring spectra $MString \to \tmf$ as achieved in \cite{AHR10}. For applications to the stable homotopy groups of spheres and exotic spheres see e.g.\ \cite{H-M98}, \cite{BHHM}, \cite{W-X17} and \cite{MoreStableStems}. 

In number theory, it is very common not only to consider modular forms with respect to $\SL_2(\Z)$, but also to congruence subgroups of these; the most important being $\Gamma = \Gamma_0(n)$, $\Gamma_1(n)$ or $\Gamma(n)$. Algebro-geometrically, such modular forms can be defined as sections of the pullback of $\omega^{\tensor *}$ to compactifications $\MMb(\Gamma)$ of stacks classifying generalized elliptic curves with certain level structures (see e.g.\ \cite{D-R73}, \cite{D-I95}, \cite{Con07}, \cite{MeiDecMod}); for example, $\MMb(\Gamma_1(n))$ classifies generalized elliptic curves with a chosen point of order $n$ whose multiples intersect every irreducible component of every geometric fiber. Hill and Lawson \cite{HL13} defined sheaves of $E_{\infty}$-ring spectra on these stacks and obtained spectra $\Tmf(\Gamma)$, as their global sections, and $\TMF(\Gamma)$, by restriction to the loci of smooth elliptic curves. The latter spectra are good topological analogues of the rings $\widetilde{M}(\Gamma; \Z[\frac1n])$ of meromorphic modular forms in the sense that $\pi_*\TMF(\Gamma)$ is isomorphic to this ring if $\Gamma$ is $\Gamma_1(n)$ or $\Gamma(n)$ (with $n\geq 2$) and, if we invert $6$, also in the case $\Gamma = \Gamma_0(n)$.

In contrast, neither $\Tmf(\Gamma)$ nor its connective cover $\tau_{\geq 0}\Tmf(\Gamma)$ are in general good analogues of the ring of holomorphic modular forms $M(\Gamma; \Z[\frac1n])$, even in the nice case of $\Gamma = \Gamma_1(n)$ and $n\geq 2$. Writing $\Tmf_1(n)$ for $\Tmf(\Gamma_1(n))$, the reason is that $H^1(\MMb(\Gamma_1(n)); \omega)$ and thus $\pi_1\Tmf_1(n)$ is non-trivial in general (with $n=23$ being the first example), while this contribution does not occur in $M(\Gamma; \Z[\frac1n])$. Following an idea of Lawson, we define a connective version $\tmf_1(n)$ by ``artificially'' removing $\pi_1$, while still retaining the $E_{\infty}$-structure on $\tmf_1(n)$. The following will be proven as \cref{thm:tmfGamma} and \cref{prop:tmf1(n)C2}.
\begin{thm}\label{MainTheorem1}
		There is an essentially unique connective $E_{\infty}$-ring spectrum $\tmf_1(n)$ with an $E_{\infty}$-ring map $\tmf_1(n) \to \Tmf_1(n)$ that identifies the homotopy groups of the source with $M(\Gamma_1(n);\Z[\frac1n])$. 
		
		Moreover, the involution of $\MMb(\Gamma_1(n))$ sending a point of order $n$ on the universal elliptic curve to its negative defines on $\tmf_1(n)$ the structure of a genuine $C_2$-spectrum. Its slices in the sense of \cite{HHR} are trivial in odd degrees and can be explicitly identified in even degrees. 
\end{thm}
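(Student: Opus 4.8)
Since $\MMb(\Gamma_1(n))_{\Z[1/n]}$ is proper of relative dimension one over the affine base $\Spec\Z[\tfrac1n]$, quasi-coherent cohomology on it vanishes above degree one, so the descent spectral sequence for $\Tmf_1(n)=\Gamma(\MMb(\Gamma_1(n));\OO^{top})$ has only the columns $H^0$ and $H^1$ of $\omega^{\tensor *}$ and collapses. As in each total degree only one filtration piece survives, we read off $\pi_{2k}\Tmf_1(n)\cong H^0(\MMb(\Gamma_1(n));\omega^{\tensor k})=M_k(\Gamma_1(n);\Z[\tfrac1n])$ and $\pi_{2k-1}\Tmf_1(n)\cong H^1(\MMb(\Gamma_1(n));\omega^{\tensor k})$. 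The Kodaira--Spencer isomorphism identifies the dualizing sheaf of $\MMb(\Gamma_1(n))$ with $\omega^{\tensor 2}(-\cusps)$, so Serre duality forces $H^1(\MMb(\Gamma_1(n));\omega^{\tensor k})=0$ for $k\geq 2$ ($\omega$ is ample, there is a cusp). Hence $\tau_{\geq 0}\Tmf_1(n)$ is connective with homotopy $M(\Gamma_1(n);\Z[\tfrac1n])$ in even degrees and a single spurious group $\pi_1=H^1(\MMb(\Gamma_1(n));\omega)$ in odd degrees.

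\textbf{Step 2: $\tmf_1(n)$ and its uniqueness.} Put $T:=\tau_{\geq 0}\Tmf_1(n)$, $V:=\pi_1 T$. The first Postnikov stage $\tau_{\leq 1}T\to H\Z[\tfrac1n]$ is a square-zero extension by $\Sigma HV$, classified by $\kappa\in\pi_0\Map_{H\Z[1/n]}(L_{H\Z[1/n]/\mathbb{S}},\Sigma^2 HV)$. A transitivity argument (through $H\F_p$) shows the topological cotangent complex $L_{H\Z[1/n]/\mathbb{S}}$ is $1$-connected with $\pi_2$ a $2$-group, so $\kappa$ lies in $\Hom_{\Z[1/n]}(\pi_2 L_{H\Z[1/n]/\mathbb{S}},V)$ and vanishes once $V=H^1(\MMb(\Gamma_1(n));\omega)$ is checked to be $2$-torsion-free. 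Thus $\tau_{\leq 1}T\simeq H\Z[\tfrac1n]\oplus\Sigma HV$ in $\CAlg$, and we define
\[
\tmf_1(n)\ :=\ T\times_{\tau_{\leq 1}T}H\Z[\tfrac1n],
\]
the pullback along the Postnikov map and the summand inclusion. A Mayer--Vietoris computation yields $\pi_*\tmf_1(n)\cong M(\Gamma_1(n);\Z[\tfrac1n])$, and $\tmf_1(n)\to T\to\Tmf_1(n)$ is the required $E_\infty$-ring map. Conversely, any connective $E_\infty$-ring $R$ equipped with such a map to $\Tmf_1(n)$ factors through $T$; as $\pi_1 R=0$, the induced map to $\tau_{\leq 1}T\simeq H\Z[\tfrac1n]\oplus\Sigma HV$ has trivial $\Sigma HV$-component (the relevant derivation vanishes because $L_{R/\mathbb{S}}$ is $1$-connected while $\Sigma HV$ is $1$-truncated), so $R$ factors through $\tmf_1(n)$, and the factorisation is an equivalence by comparing homotopy. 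This gives the essential uniqueness.

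\textbf{Step 3: the genuine $C_2$-structure.} The involution $(E,P)\mapsto(E,-P)$ is naturally isomorphic to the identity via $[-1]_E$, so on the base $\MMb(\Gamma_1(n))$ the $C_2$-action is trivializable, but $[-1]_E$ acts on $\omega^{\tensor k}\cong\pi_{2k}\OO^{top}$ by $(-1)^k$; the resulting $C_2$-action on $\Tmf_1(n)$ is therefore of ``Real'' type, the weight-parity action, precisely as complex conjugation acts on $KU$. To upgrade this Borel action to a genuine $C_2$-spectrum, the plan is to produce $\OO^{top}$ as a sheaf of genuine $C_2$-$E_\infty$-ring spectra: locally $\OO^{top}$ is an even-periodic elliptic spectrum carrying a Real orientation ($[-1]$ on the elliptic curve playing the role of conjugation on the formal multiplicative group), which lets one run the Goerss--Hopkins--Miller obstruction theory internally to genuine $C_2$-$E_\infty$-rings and take global sections. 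Connective cover, first Postnikov truncation, and pullbacks in $\CAlg$ are all defined and functorial for genuine $C_2$-spectra, so $\tmf_1(n)$ inherits the structure.

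\textbf{Step 4: slices, and the main obstacle.} For the slice tower of \cite{HHR} I would first treat $\Tmf_1(n)$: the local Real model is a form of $KR$ (or of $KR$-modules), whose even slices are $\Sigma^{k\rho}$-suspensions of Eilenberg--MacLane $C_2$-spectra and whose odd slices vanish. Propagating this through the $C_2$-descent, the slices of $\Tmf_1(n)$ are $\rho$-twisted Eilenberg--MacLane spectra on Mackey functors assembled from $H^0(\omega^{\tensor *})$ in even degrees and from $H^1(\omega^{\tensor *})$ in odd degrees; after $\tau_{\geq 0}$ only the $\pi_1$-contribution of the latter remains. Passing to $\tmf_1(n)$ kills exactly $\pi_1$, hence all odd slices, leaving $P^{2k}_{2k}\tmf_1(n)\simeq\Sigma^{k\rho}H\,\underline{M_k}$ for $k\geq 0$, with $\underline{M_k}$ the Mackey functor attached to $M_k(\Gamma_1(n);\Z[\tfrac1n])$ and its weight-parity $C_2$-action. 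The hardest part, I expect, is Step~3 together with this computation --- carrying out the Goerss--Hopkins--Miller construction Real-equivariantly and then tracking the Mackey-functor data honestly through the descent spectral sequence --- since Steps~1--2, granted the cotangent-complex and torsion-freeness inputs, are comparatively formal.
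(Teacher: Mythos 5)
Your Step 2 has a genuine gap at exactly the point the theorem is designed to handle. The Postnikov $k$-invariant you call $\kappa$ lives, by your own connectivity analysis, in $\Hom(\pi_2 L_{H\Z[\frac1n]/\S},V)$ with $\pi_2 L_{H\Z[\frac1n]/\S}$ a $2$-group; concretely this group is the $2$-torsion $V[2]$ of $V=H^1(\MMb(\Gamma_1(n));\omega)$, and the class itself is precisely $\eta\cdot 1\in\pi_1\tau_{\geq 0}\Tmf_1(n)$. So your vanishing argument works only under the extra hypothesis that $H^1(\MMb(\Gamma_1(n));\omega)$ has no $2$-torsion, and that hypothesis is false in general: as recalled in the paper (via \cite{MeiDecMod}, Appendix C), weight-$1$ forms mod $2$ for $\Gamma_1(65)$ do not all lift, equivalently $\pi_1\Tmf_1(65)$ has $2$-torsion. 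The theorem is asserted for all $n$, and the actual input needed is that the image of $\eta$ in $H^1(\MMb(\Gamma_1(n));\omega)$ vanishes even when that group has $2$-torsion; this is \cref{lem:eta} (resting on \cite{MeiDecMod}, Proposition 2.16), and it is then exploited not through a cotangent-complex computation but by mapping out of the $E_\infty$-cone $C^{\S}(\eta)$, identifying $\tau_{\leq 1}C^{\S}(\eta)\simeq H\Z$, and invoking \'etale rigidity to produce and rigidify the map $H\pi_0R\to\tau_{\leq 1}R$; the uniqueness also comes from the contractibility of the space of nullhomotopies of $\eta$ in $\tau_{\leq 1}R$. Without the $\eta$-vanishing input your construction of the splitting $H\Z[\frac1n]\to\tau_{\leq 1}T$, and hence of $\tmf_1(n)$, simply does not exist to be built for such $n$ by your route.

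Steps 3–4 are a plan rather than a proof, and the plan is both harder than necessary and not the paper's route. You do not need (and the paper does not attempt) a Goerss--Hopkins--Miller obstruction theory internal to genuine $C_2$-$E_\infty$-rings: the paper takes the Borel action induced by the involution $t$ of $\MMb(\Gamma_1(n))$ (see \cref{rem:C2actions}), equips $\Tmf_1(n)\simeq\Tmf_1(n)^{E(C_2)_+}$ with the \emph{cofree} genuine structure, and then produces the genuine content by a $C_2$-equivariant refinement of the killing construction: the Real Hopf map $\overline{\eta}\in\pi_{\sigma}^{C_2}\S$ vanishes in $\pi_\sigma^{C_2}\Tmf_1(n)$, $\tau_{\leq 1}C\overline{\eta}\simeq H\underline{\Z}$ in the slice filtration, a slice Hurewicz lemma and a truncation-adjoint lemma feed into \cref{prop:Killingpi1C2}, and strong evenness (odd slices zero, even slices $S^{k\rho}\tensor H\underline{A}$) is obtained from \cite{MeiTopLevel} (Theorem 6.16) $2$-locally together with homotopy fixed point spectral sequence arguments and the criteria of \cite{H-M17}, as in \cref{prop:tmf1(n)C2}. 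Your heuristic that the local Real model is ``a form of $KR$'' is also off (locally $\OO^{top}$ is a height-$\leq 2$ elliptic spectrum), and the slice identification for $\tmf_1(n)$ is asserted rather than derived; so the equivariant half of the statement is not established by the proposal.
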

The analogous theorem also works to define $\tmf(n)$, but $\tmf_0(n)$ we define only in certain cases since in the general case it is not yet clear what the ``correct'' definition is. The spectrum $\tmf(n)$ has been further investigated in \cite[Theorem 3.14]{H-RRamification}, where a criterion for the non-vanishing of its Tate spectrum is proven. 

One of the principal motivations for the consideration of $\tmf_1(n)$ is its connection to the Hirzebruch level-$n$ genera $MU_* \to M(\Gamma_1(n); \Z[\frac1n])$. They specialize for $n=2$ to the classic Ochanine elliptic genus and have similar rigidity properties in general \cite{HBJ}. We will prove the following as \cref{thm:HirzebruchRealization}.

\begin{thm}
	For every $n\geq 2$, there is a ring map $MU \to \tmf_1(n)$ realizing on homotopy groups the Hirzebruch level-$n$-genus. Moreover, this map refines to a map $MU_{\R} \to \tmf_1(n)$ of $C_2$-spectra. 
\end{thm}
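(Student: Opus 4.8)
The plan is to construct the map $MU \to \tmf_1(n)$ by obstruction theory along a Postnikov-type filtration of the target, exploiting that $MU$ is, in a suitable sense, ``free'' so that $E_\infty$-ring maps out of it are controlled by their effect on homotopy. First I would recall that the Hirzebruch level-$n$ genus is a genus, hence classified by a one-dimensional formal group law over $M(\Gamma_1(n);\Z[\tfrac1n])$, namely the formal group of the universal elliptic curve with $\Gamma_1(n)$-structure, based at the chosen point of order $n$ (rather than the origin); concretely this is the formal group law attached to the logarithm built from the Eisenstein-type series appearing in \cite{HBJ}. Since $\tmf_1(n)$ is connective with $\pi_{2*}\tmf_1(n) = M(\Gamma_1(n);\Z[\tfrac1n])$ concentrated in even degrees (by \cref{MainTheorem1}), this formal group law over $\pi_*\tmf_1(n)$ is the data needed to build a \emph{complex orientation}, i.e. a map of homotopy-commutative ring spectra $MU \to \tmf_1(n)$ inducing it; at the level of $E_\infty$-rings I would instead invoke the Quillen-type universal property / Thom-spectrum perspective, writing $MU$ as the Thom spectrum of $BU \to \mathrm{Pic}(\Sb)$ and producing the $E_\infty$-lift by specifying a nullhomotopy of the composite $BU \to \mathrm{Pic}(\Sb) \to \mathrm{Pic}(\tmf_1(n))$, or by appealing directly to the Baker–Richter / Hopkins–Miller machinery that realizes Landweber-exact-style data over even connective $E_\infty$-rings. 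In either guise the key input is that the relevant formal group law is classified by the universal even-degree homotopy ring, and that $\tmf_1(n)$ is even and connective.

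The main obstacle is the $E_\infty$-refinement, not the mere existence of a ring map: one must check that the obstruction groups to lifting the multiplicative orientation through the tower of approximations to $\tmf_1(n)$ vanish, or alternatively that the space of $E_\infty$-orientations is nonempty. Here I would use the connectivity of $\tmf_1(n)$ together with the explicit identification of its homotopy with a ring of holomorphic modular forms to run the relevant spectral sequence (the André–Quillen / topological-André–Quillen obstruction spectral sequence, or the descent spectral sequence computing $\mathrm{Pic}$), reducing the obstructions to cohomology of modular-forms rings which can be controlled after inverting $n$. A cleaner route, which I would pursue in parallel, is functoriality: the Hill–Lawson sheaf $\OO^{top}$ on $\MMb(\Gamma_1(n))$ receives an $E_\infty$-map from the corresponding sheaf-level orientation (the elliptic genus is already realized on $\Tmf_1(n)$ or at least on $\TMF_1(n)$ by Ando–Hopkins–Rezk–type arguments applied fiberwise), and then $\tmf_1(n) \to \Tmf_1(n)$ being the connective-cover-type map from \cref{MainTheorem1}, one lifts the orientation along it using that $MU$ is connective: a map from a connective spectrum into $\Tmf_1(n)$ whose image lies in non-negative homotopy factors essentially uniquely through $\tmf_1(n)$, because the latter was defined precisely to capture the non-negative part while retaining $E_\infty$-structure.

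For the $C_2$-equivariant refinement $MU_\R \to \tmf_1(n)$, the plan is to repeat the construction genuinely over $C_2$. By \cref{MainTheorem1}, $\tmf_1(n)$ carries a genuine $C_2$-action coming from the $[-1]$-involution on $\MMb(\Gamma_1(n))$, and $MU_\R$ is the real bordism spectrum with its Real orientation; the universal property of $MU_\R$ (as the Thom $C_2$-spectrum of $BU_\R \to \mathrm{Pic}$, or via Real formal group laws in the sense of Hu–Kriz) says that $C_2$-ring maps out of it correspond to \emph{Real orientations} of the target. So it suffices to exhibit a Real orientation of $\tmf_1(n)$, i.e. a class in $\widetilde{\tmf_1(n)}^{\rho}_{C_2}(\CP^\infty_\R)$ restricting to the chosen complex orientation underlying — and here the compatibility is automatic because the $C_2$-action on $\tmf_1(n)$ is induced from an action on the moduli stack fixing the formal-group data up to the sign on the order-$n$ point, and the elliptic genus is compatible with this sign (the Ochanine/Hirzebruch logarithm is odd in the appropriate variable). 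The hard part will again be the obstruction theory, now in genuine $C_2$-spectra, where one runs the slice or the $RO(C_2)$-graded descent spectral sequence; the explicit slice computation promised in \cref{MainTheorem1} is exactly what makes these obstruction groups accessible, and after inverting $n$ the potentially troublesome $2$-local phenomena are tame enough that the lift exists and, by the same connectivity argument as above, is essentially unique. I expect the cleanest writeup to package the non-equivariant and equivariant cases uniformly by working with the genuine $C_2$-spectrum throughout and only taking underlying spectra at the end.
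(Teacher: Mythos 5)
Your plan spends most of its effort on an $E_\infty$-refinement that the statement does not claim: \cref{thm:HirzebruchRealization} only asserts a homotopy ring map (a complex orientation) and its Real refinement; the $E_\infty$-statement is genuinely harder and is Senger's later theorem \cite{SengerObstruction}, not part of this proof. Meanwhile the step you actually need has a gap. A formal group law over $\pi_*\tmf_1(n)$ does not by itself yield a ring map $MU \to \tmf_1(n)$ inducing it: for an even ring spectrum $E$, orientations correspond to coordinates on the specific formal group $\Spf E^{2*}(\CP^\infty)$ with the canonical invariant differential, and only formal group laws strictly isomorphic to that one are realized (e.g.\ the multiplicative formal group law over $\pi_*H\Z$ comes from no orientation of $H\Z$). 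So one must (i) identify $\Spf \tmf_1(n)^{2*}(\CP^\infty)$ with the formal group $\widehat{\CC}$ of the universal generalized elliptic curve over $\MMb_1^1(n)$ --- in the paper this uses that $\tmf_1(n)[c_4^{-1}]$ and $\tmf_1(n)[\Delta^{-1}]$ agree with the corresponding localizations of $\Tmf_1(n)$, are Landweber-exact elliptic theories, and that the non-vanishing loci of $c_4$ and $\Delta$ cover $\MMb_1^1(n)$ (\cref{lem:cxor}) --- and (ii) show that the Hirzebruch coordinate is a coordinate of $\widehat{\CC}$ \emph{over $\Z[\frac1n]$}, homogeneous of degree $1$ with the canonical differential. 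Point (ii) is the real content of the paper's Section~\ref{sec:genus}: one constructs the meromorphic function with divisor $n(P)-n(e)$ normalized by $\lambda^{n}$ and extracts an $n$-th root on a degree-$n$ \'etale $\mu_n$-cover; your appeal to the analytic Eisenstein-type logarithm of \cite{HBJ} only produces the coordinate over $\C$ (or rationally) and does not address this integrality. (Also, the relevant formal group is the completion at the origin; the point of order $n$ enters only through the choice of coordinate, not as a new base point.) Your fallback route through $\Tmf_1(n)$ is likewise problematic: $\Tmf_1(n)$ is not even (it has nonzero $\pi_1$ in general and negative homotopy), so its complex orientability is not clear, and a map from a connective spectrum into $\tau_{\geq 0}\Tmf_1(n)$ does \emph{not} factor through $\tmf_1(n)$ for free, since the construction kills $\pi_1$; the factorization needs an argument (for $MU$ it works because $MU$ is even, but you do not give this).

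For the $C_2$-statement your framework agrees with the paper's (by \cite[Theorem 2.25]{H-K01}, $C_2$-ring maps $MU_{\R}\to\tmf_1(n)$ are Real orientations), but no slice or $RO(C_2)$-graded obstruction spectral sequence is needed and there are no ``troublesome $2$-local phenomena'' to tame: since $\CP^\infty$ with its conjugation action is built from cells of dimension $k\rho$ and $\tmf_1(n)$ is strongly even (\cref{prop:tmf1(n)C2}), the forgetful map $\tmf_1(n)^{\rho}_{C_2}(\CP^{\infty}) \to \tmf_1(n)^{2}(\CP^{\infty})$ is an isomorphism, so the chosen complex orientation lifts uniquely to a Real orientation. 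Your sketch names the right ingredients (the slices, Hu--Kriz) but replaces this one-line argument with unverified claims that the obstruction groups are ``accessible,'' which as written is not a proof.
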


We have two further classes of results on the spectra $\tmf_1(n)$ and their cousins. The first is the following compactness result, contained in \cref{thm:tmfperfect} and \cref{cor:fp}.
\begin{thm}
	The $\tmf[\frac1n]$-modules $\tmf_0(n)$, $\tmf_1(n)$ and $\tmf(n)$ are perfect, i.e.\ they are compact objects in the module category, in the cases they are defined. In particular, their $\F_p$-cohomologies are finitely presented over the Steenrod algebra and thus their $p$-completions are fp-spectra in the sense of \cite{M-R99}.
\end{thm}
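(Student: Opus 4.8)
The plan is to reduce the perfectness statement to finite Tor-amplitude and then verify the latter prime by prime. We explain the argument for $\tmf_1(n)$; the cases of $\tmf(n)$ and of $\tmf_0(n)$ (where the latter is defined) are the same, with $\Gamma_1(n)$ replaced by $\Gamma(n)$, resp.\ $\Gamma_0(n)$. Throughout we use that $\tmf[\frac1n]$ is a connective $E_\infty$-ring with $\pi_0$ a regular Noetherian ring ($\Z[\frac1n]$, or $\Z[\frac1n,\zeta_n]$ for $\tmf(n)$) of Krull dimension one and with each $\pi_i\tmf[\frac1n]$ finitely generated over $\pi_0$, the latter because $\MMb_{ell}$ is proper over $\Z$; and that over such a ring a connective module is perfect if and only if it is almost perfect and of finite Tor-amplitude. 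By \cref{MainTheorem1} the homotopy of $\tmf_1(n)$ is $M(\Gamma_1(n);\Z[\frac1n])$, concentrated in even degrees, and $M_k(\Gamma_1(n);\Z[\frac1n])=H^0(\MMb(\Gamma_1(n));\omega^{\otimes k})$ is a finitely generated $\Z[\frac1n]$-module because $\MMb(\Gamma_1(n))$ is proper over $\Z[\frac1n]$; hence $\tmf_1(n)$ is almost perfect over $\tmf[\frac1n]$, and the whole content is finite Tor-amplitude. Since $\pi_0$ has Krull dimension one, this may be tested after the base changes to $\tmf[\frac1n]_{(p)}=\tmf_{(p)}$ for the primes $p\nmid n$ and to $\tmf[\frac1n]_\Q=\tmf_\Q$, with uniform bounds; equivalently, it suffices to show that $\tmf_1(n)$ is a perfect module over each of these rings.

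For $p\geq 5$ and rationally this is straightforward. There $\pi_*\tmf_{(p)}=\Z_{(p)}[c_4,c_6]$ (respectively $\Q[c_4,c_6]$) is a polynomial ring, and $\pi_*\tmf_1(n)_{(p)}=M(\Gamma_1(n);\Z_{(p)})$ is module-finite over it because the finite map $q\colon\MMb(\Gamma_1(n))\to\MMb_{ell}$ makes the ring of modular forms of level $\Gamma_1(n)$ module-finite over $M_*(\SL_2(\Z);\Z_{(p)})=\Z_{(p)}[c_4,c_6]$. Feeding this into the Künneth spectral sequence $\Tor^{\pi_*\tmf_{(p)}}_{s,t}\bigl(\pi_*\tmf_1(n)_{(p)},\Z_{(p)}\bigr)\Rightarrow\pi_{t-s}\bigl(\tmf_1(n)_{(p)}\otimes_{\tmf_{(p)}}H\Z_{(p)}\bigr)$, which has a horizontal vanishing line because $(c_4,c_6)$ is a regular sequence cutting out $\pi_0$ and whose columns are finitely generated over $\Z_{(p)}$ and hence bounded in internal degree, we conclude that $\tmf_1(n)_{(p)}\otimes_{\tmf_{(p)}}H\Z_{(p)}$ is bounded above. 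Therefore $\tmf_1(n)_{(p)}$ has finite Tor-amplitude over the regular ring $\tmf_{(p)}$, so it is perfect there.

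The primes $p\in\{2,3\}$ (not dividing $n$) are the real obstacle, since for them $\pi_*\tmf_{(p)}$ is not regular — it contains nilpotent torsion classes such as $\eta$ — and the previous paragraph breaks down. The strategy is to descend to an auxiliary connective model of topological modular forms with a level structure prime to $p$ whose homotopy ring \emph{is} regular: for $p=2$ take $A=\tmf_1(3)_{(2)}$, with $\pi_*\tmf_1(3)=\Z[\frac13][a_1,a_3]$ polynomial ($|a_1|=2$, $|a_3|=6$), and for $p=3$ take $A=\tmf_0(2)_{(3)}$, with $\pi_*\tmf_0(2)=\Z[\frac12][b_2,b_4]$ polynomial. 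One then argues: (i) the ring map $\tmf_{(p)}\to A$ is descendable — this follows from the descent and affineness results for topological modular forms with level structure; (ii) the base change $\tmf_1(n)_{(p)}\otimes_{\tmf_{(p)}}A$ is (a possibly mildly modified version of) the connective modular forms spectrum of the fibre-product level structure $\Gamma_1(n)\cap\Gamma_1(3)$ (resp.\ $\Gamma_1(n)\cap\Gamma_0(2)$), so in particular its homotopy is concentrated in even degrees and module-finite over the polynomial ring $\pi_*A$; (iii) by the computation of the previous paragraph now carried out over $\pi_*A$, this base change is perfect over $A$; and (iv) perfectness of a module descends along a descendable extension, so $\tmf_1(n)_{(p)}$ is perfect over $\tmf_{(p)}$. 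Combining over all primes and the rational place yields finite Tor-amplitude — hence perfectness — of $\tmf_1(n)$ over $\tmf[\frac1n]$. I expect (i) and (ii) to require the genuine work: one must exhibit a descendable cover by a level structure prime to $p$ and control the base change precisely enough — in particular understand how the ``$\pi_1$-removal'' defining $\tmf_1(n)$ interacts with $-\otimes_{\tmf_{(p)}}A$ — to pin down the homotopy of the base change as a finitely generated $\pi_*A$-module.

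Finally, the assertion about $\F_p$-cohomology is formal. For $p\nmid n$ we have $\tmf[\frac1n]^\wedge_p\simeq\tmf^\wedge_p$, and $H^*(\tmf;\F_p)$ is finitely presented over the Steenrod algebra $\mathcal{A}$ — classically $\tmf$ is an fp-spectrum; e.g.\ $H^*(\tmf;\F_2)\cong\mathcal{A}/\!\!/\mathcal{A}(2)$. As $\tmf_1(n)$ is a perfect $\tmf[\frac1n]$-module, $\tmf_1(n)^\wedge_p$ is a retract of a finite cell $\tmf^\wedge_p$-module, so $H^*(\tmf_1(n);\F_p)$ is obtained from finitely many shifted copies of $H^*(\tmf;\F_p)$ by finitely many cofibre sequences and a retract; since finitely presented $\mathcal{A}$-modules are closed under suspension, extension and retract, $H^*(\tmf_1(n);\F_p)$ is finitely presented over $\mathcal{A}$. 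By \cite{M-R99} this makes $\tmf_1(n)^\wedge_p$ — and likewise $\tmf_0(n)^\wedge_p$ and $\tmf(n)^\wedge_p$, in the cases they are defined — an fp-spectrum.
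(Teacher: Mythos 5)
Your overall strategy (almost perfect $+$ finite Tor-amplitude, fracture over the primes, K\"unneth at the regular places, descent along $\tmf_{(2)}\to\tmf_1(3)_{(2)}$ resp.\ $\tmf_{(3)}\to\tmf_0(2)_{(3)}$ at $p=2,3$) is coherent, and several pieces are fine: almost perfectness is immediate from properness of $\MMb(\Gamma)$, the argument at $p\geq 5$ and rationally works, step (i) is not really an issue (by \cite[Theorem 4.10 resp.\ 4.13]{Mathom} the ring $A$ is a \emph{perfect and faithful} $\tmf_{(p)}$-module, which gives descendability; and since $A$ is perfect over $\tmf_{(p)}$, step (iv) even simplifies: $M$ lies in the thick subcategory generated by $M\otimes_{\tmf_{(p)}}A$, which is perfect over $A$ and hence over $\tmf_{(p)}$). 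The genuine gap is exactly the step you flag as (ii): at $p=2,3$ you never establish that $\pi_*\bigl(\tmf_1(n)_{(p)}\otimes_{\tmf_{(p)}}A\bigr)$ is even and module-finite over the polynomial ring $\pi_*A$, and this is the heart of the matter, not a technicality. The proposed identification with ``the connective modular forms spectrum of the fibre-product level structure'' is itself problematic: base change over $\Tmf$ does not commute with compactification in a naive way (the fibre product of compactified moduli stacks need not be the compactified moduli of the intersected level structure), and it is even less clear how the $\pi_1$-removal defining $\tmf_1(n)$ interacts with $-\otimes_{\tmf_{(p)}}A$. Nor can one bypass the identification by general nonsense: the K\"unneth spectral sequence over $\pi_*\tmf_{(p)}$ has unbounded Tor (that ring is not regular), so finite generation of the abutment over $\pi_*A$ does not follow formally, and without it your step (iii) cannot run. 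So as written the argument is incomplete precisely at the primes where the statement is hard.

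The paper takes a different route that avoids any base-change identification. It imports from \cite[Proposition 2.12]{MeiTopLevel} that $\Tmf(\Gamma)$ is already a perfect $\Tmf[\frac1n]$-module, and proves a general transfer result (\cref{prop:regnoet}): if $\pi_0R$ is regular noetherian, all $\pi_nR$ are finitely generated over $\pi_0R$, and $H\pi_0R$ is perfect over $\tau_{\geq 0}R$, then $\tau_{\geq k}$ of any perfect $R$-module is perfect over $\tau_{\geq 0}R$. The only nontrivial input is then that $H\Z[\frac1n]$ is a perfect $\tmf[\frac1n]$-module, which is proved with the same finite complexes you invoke, but used much more softly: $\tmf[\frac1n]\otimes X\simeq\tmf_1(3)[\frac1n]$ (resp.\ $\tmf_1(2)[\frac1n]$) has polynomial homotopy, killing $a_1,a_3$ (resp.\ $b_2,b_4$) yields $H\Z[\frac1n]$, and an arithmetic fracture argument handles general $n$. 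Finally, the difference between $\tmf(\Gamma)$ and $\tau_{\geq 0}\Tmf(\Gamma)$ is a piece with finitely many finitely generated homotopy groups (an $H\pi_1\Tmf(\Gamma)$, or $\Sigma^{\sigma}HM$ in the index-two case), which is perfect by \cref{lem:finitely}. If you want to salvage your approach, the cleanest fix is to replace your steps (ii)--(iii) by exactly this input: prove $H\Z_{(p)}$ (or $H\F_p$) is perfect over $\tmf_{(p)}$ via the finite-complex trick and deduce finite Tor-amplitude from that, rather than trying to control the level-structure base change. Your final paragraph on fp-spectra is fine, though the paper deliberately avoids quoting $H^*(\tmf;\F_2)\cong\mathcal{A}/\!\!/\mathcal{A}(2)$ and instead gives a less computational proof that $\tmf^\wedge_p$ is fp using regularity of $(p,v_1,v_2)$ on $\pi_*\tmf_1(3)$ resp.\ $\pi_*\tmf_1(2)$.
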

By a result of Kuhn \cite[Theorem 1.7]{KuhnHurewicz} this implies for example that the Hurewicz image of $\pi_*\tmf(\Gamma) \cong \pi_*\Omega^{\infty}\tmf(\Gamma)$ in $H_*(\Omega^{\infty}\tmf(\Gamma); \F_p)$ is finite dimensional, where $\tmf(\Gamma)$ denotes either $\tmf_0(n)$, $\tmf_1(n)$ or $\tmf(n)$. We also note that in contrast to the theorem, $\tmf_1(n)$ will not be a perfect $\tmf_0(n)$-module in general. We also show that $\tmf_0(n)$, $\tmf_1(n)$ and $\tmf(n)$ are faithful as $\tmf[\frac1n]$-modules, answering a question of H{\"o}ning and Richter \cite[p.21]{H-RRamification}.

The second result is a variant of the decomposition results of \cite{MeiTopLevel}, which we state in this introduction only at the prime $2$ and for $\tmf_1(n)$, and which will be proven as \cref{thm:splittingC2}.
\begin{thm}
Let $n>1$ be odd. If one can lift every weight $1$-modular form for $\Gamma_1(n)$ over $\F_2$ to a form of the same weight and level over $\Z_{(2)}$, we have a $C_2$-equivariant splitting
\[\tmf_1(n)_{(2)} \simeq \bigoplus_i \Sigma^{n_i\rho}\tmf_1(3)_{(2)},\]
where $\rho$ denotes the real regular representation of $C_2$. 
\end{thm}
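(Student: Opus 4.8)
The plan is to run the decomposition machinery of \cite{MeiTopLevel} after localizing at $2$, carrying the $C_2$-action along throughout. Since $n$ is odd we have $\Z[\frac1n]_{(2)}=\Z_{(2)}$, so $\tmf_{(2)}$, $\tmf_1(3)_{(2)}$, $\tmf_1(n)_{(2)}$ and the stacks $\MMb_{ell,(2)}$, $\MMb(\Gamma_1(3))_{(2)}$, $\MMb(\Gamma_1(n))_{(2)}$ with their sheaf $\OO^{top}$ are all defined over $\Z_{(2)}$, and the forgetful maps to $\MMb_{ell}$ exhibit $\tmf_1(3)_{(2)}$ and $\tmf_1(n)_{(2)}$ as $\tmf_{(2)}$-modules. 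First I would recall from \cref{thm:tmfGamma}, \cref{prop:tmf1(n)C2} and the Hill--Lawson framework the precise shape of the genuine $C_2$-structure: the involution negates the chosen $n$-torsion (resp.\ $3$-torsion) point, it acts on $\omega$ through the sign character, and so the polynomial generators $a_1,a_3$ of $\pi_*\tmf_1(3)_{(2)}$ refine to classes in degrees $\rho$ and $3\rho$ of the genuine $C_2$-spectrum $\tmf_1(3)_{(2)}$; this is the structural reason the allowed shifts are multiples of $\rho$, and it pins down the meaning of each $\Sigma^{n_i\rho}\tmf_1(3)_{(2)}$.

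Second, I would reduce the $C_2$-equivariant splitting to an algebraic decomposition on the moduli side, as in the non-equivariant argument of \cite{MeiTopLevel}. By the characterisations in \cref{MainTheorem1}, $\tmf_1(n)_{(2)}$ is controlled by moduli-theoretic data over $\MMb_{ell,(2)}$ with homotopy sheaves the vector bundles $f_*\omega^{\otimes *}$ for $f\colon\MMb(\Gamma_1(n))_{(2)}\to\MMb_{ell,(2)}$, together with the auxiliary input that cuts down to the connective model; since all the homotopy sheaves in sight are concentrated in even internal degrees, a splitting of them lifts to a splitting of the corresponding $\OO^{top}$-modules. Hence it suffices to produce, compatibly with the involution, a splitting of the graded quasi-coherent sheaf $f_*^{\Gamma_1(n)}\omega^{\otimes *}$ into shifted copies of $f_*^{\Gamma_1(3)}\omega^{\otimes *}$, a shift by $n_i\rho$ corresponding to a twist by the $n_i$-th power of $\omega$ with the sign character. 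Equivalently, passing to the quotient stacks $\MMb(\Gamma_1(n))_{(2)}/\!/C_2=\MMb(\langle\Gamma_1(n),-I\rangle)_{(2)}$ and $\MMb(\Gamma_1(3))_{(2)}/\!/C_2\simeq\MMb_0(3)_{(2)}$, this is a single decomposition of $M(\Gamma_1(n);\Z_{(2)})$, with its involution, into shifted copies of $M(\Gamma_1(3);\Z_{(2)})=\Z_{(2)}[a_1,a_3]$ over $M(\SL_2(\Z);\Z_{(2)})$, from which the fixed-point decomposition is obtained by restricting to the involution-invariants.

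Third --- and this is where the hypothesis enters --- I would establish the algebraic decomposition by reduction modulo $2$. Over $\F_2$ the splitting holds \emph{unconditionally}: one uses that $\MMb(\Gamma_1(3))$ is a weighted projective line, over which every vector bundle is a sum of line bundles, together with the explicit mod-$2$ presentations of the stacks and of the sign action, both to split $f_*\omega^{\otimes *}/2$ and to read off the integers $n_i$. The obstruction to lifting this splitting from $\F_2$ to $\Z_{(2)}$ is a $2$-torsion class in the coherent cohomology $H^1(\MMb(\Gamma_1(n))_{(2)};\omega)$, and the hypothesis --- that $M_1(\Gamma_1(n);\Z_{(2)})\to M_1(\Gamma_1(n);\F_2)$ is surjective --- says precisely, via the long exact sequence of $0\to\omega\xrightarrow{2}\omega\to\omega/2\to0$, that this $H^1$ is torsion-free; an induction on the weight, using multiplication by $a_1$ and $a_3$ to descend from weight $1$ (the genus-zero geometry of the $\Gamma_1(3)$-building block), shows that no higher obstruction survives. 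A $2$-local lifting argument --- all homotopy groups being finitely generated over $\Z_{(2)}$ --- then promotes the mod-$2$ splitting to the stated splitting, and tracking the involution throughout keeps it $C_2$-equivariant.

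The step I expect to be the main obstacle is not the underlying non-equivariant splitting, which is close to \cite{MeiTopLevel}, but verifying that the $C_2$-refinement is \emph{genuine} rather than merely Borel. One must check that the algebraic splitting lifts to an equivalence of genuine $C_2$-$\OO^{top}$-modules compatibly across the isotropy-separation square: simultaneously on underlying spectra, on categorical fixed points $\tmf_1(n)_{(2)}^{C_2}$ (where it becomes the corresponding decomposition into the fixed points of the $\Sigma^{n_i\rho}\tmf_1(3)_{(2)}$, themselves twisted shifts of $\tmf_0(3)_{(2)}=\tmf_1(3)_{(2)}^{C_2}$), and on geometric fixed points (governed by the $C_2$-fixed data of $\MMb(\Gamma_1(n))$, through which the automorphism $[-1]$ of the universal curve twists $\omega$ by $-1$). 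Arranging the moduli-theoretic input so that all of these decompositions are produced at once --- with the sign twists correct and the same multiset $\{n_i\}$ appearing in every piece --- is the delicate part of the argument.
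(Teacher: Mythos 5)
Your proposal spends nearly all of its effort re-deriving the non-connective $C_2$-equivariant splitting $\Tmf_1(n)_{(2)}\simeq\bigoplus_i\Sigma^{n_i\rho}\Tmf_1(3)_{(2)}$, which the paper simply quotes from \cite[Theorem 6.19]{MeiTopLevel}; that is not where the content of this theorem lies. The genuinely new step is the passage from $\tau_{\geq 0}\Tmf_1(n)$ to the connective model $\tmf_1(n)$, which is \emph{not} the naive (slice-)connective cover exactly in the cases the theorem is interesting for (e.g.\ $n=23$, where $\pi_1\Tmf_1(n)\neq 0$): one must show that the composite
\[\bigoplus_i\Sigma^{n_i\rho}\tmf_1(3)\;\to\;\tau_{\geq 0}\Tmf_1(n)\;\to\;\Sigma^{\sigma}H\underline{A},\qquad A=H^1(\MMb_1(n);\omega),\]
is null, so that the map factors through the fiber $\tmf_1(n)$. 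Your proposal never addresses this factorization; it implicitly treats a splitting of the homotopy sheaves (equivalently, of $\Tmf_1(n)$, ``together with the auxiliary input that cuts down to the connective model'') as if it automatically yielded a splitting of $\tmf_1(n)$. In the paper this is precisely the delicate point: the composite factors through the slice truncation $H\underline{\Z}$ of the unique weight-$0$ summand, and by \cref{lem:EM} the obstruction is the class $r(\Phi(1))\in A\tensor\Z/2$, which need \emph{not} vanish for an arbitrary choice of equivalence $\Phi$; the proof has to modify $\Phi$ on the $n_i=0$ summand (replacing $e_0$ by the image of an integral class with the same underlying restriction) to kill it. Note also that the $2$-torsion-freeness of $A$ --- i.e.\ your lifting hypothesis --- is used a second time right here, in \cref{lem:EM}, to identify $[H\underline{\Z},\Sigma^{\sigma}H\underline{A}]^{C_2}$ with $A\tensor\Z/2$; it is not only an input to the algebraic mod-$2$-to-$\Z_{(2)}$ lifting you describe.

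Relatedly, the step you single out as the main obstacle --- verifying genuineness of the equivalence via the isotropy-separation square, with separate control of categorical and geometric fixed points --- is not where the difficulty sits, and handling it that way is unnecessary. Once a map $\bigoplus_i\Sigma^{n_i\rho}\tmf_1(3)\to\tmf_1(n)$ exists and induces an isomorphism on underlying homotopy groups, it is a genuine $C_2$-equivalence immediately from the strong evenness of source and target (\cref{prop:tmf1(n)C2}) together with \cite[Lemma 3.4]{H-M17}. So the fixed-point bookkeeping you anticipate dissolves, while the factorization through the connective model --- the step your sketch passes over --- is the one that actually requires an argument and a correction of the chosen splitting.
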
 
In \cite[Appendix C]{MeiDecMod}, it is shown that for $1<n<65$ odd indeed every weight-$1$-modular form for $\Gamma_1(n)$ over $\F_2$ lifts to a form of the same weight and level over $\Z_{(2)}$, while for $n=65$ it does not. See also \cite[Remark 3.14]{MeiDecMod} for a further discussion of this condition.

\subsection*{Acknowledgments}
I want to thank Tyler Lawson for explaining to me the idea how to construct a connective model for $\TMF_1(n)$ and for the sketch of an argument that $\tmf_1(3)$ is not perfect as a $\tmf_0(3)$-module. It is also a pleasure to acknowledge the influence and encouragement of Mike Hill. Furthermore I want to thank Eva H{\"o}ning and Birgit Richter for their interest and remarks on a preliminary version and the anonymous referee for their extensive comments. 

Finally, I want to thank the Hausdorff Institute for hospitality in 2015 when part of this work was undertaken. Apologies for the subsequent delay in publication.

\subsection*{Conventions and notation}
All notions are to be understood suitably derived or $\infty$-categorical. This means that pushout means either a pushout in the respective $\infty$-category or a homotopy pushout in the underlying model category. We will use $\otimes$ for the (derived) smash product. Note that this coincides with the coproduct in the $\infty$-category $\CAlg$ of $E_{\infty}$-ring spectra. 

When we use $G$-spectra, we will always mean genuine $G$-spectra. The notations $\tau_{\leq k}$ and $\tau_{\geq k}$ denote the $k$-(co)connective cover of a spectrum  and we use the same notation for the slice-(co)connective covers of a $G$-spectrum. Furthermore, we denote by $\S$ the sphere ($G$-)spectrum. In some parts of this article, we have the opportunity to use $RO(C_2)$-graded homotopy groups of $C_2$-spectra. We will use the notation $\sigma$ for the sign representation and $\rho$ or $\C$ for the regular representation of $C_2$. 

We will use the notations $\TMF_1(n)$ and $\TMF(\Gamma_1(n))$ interchangeably and similarly in related contexts. 

\section{The construction of connective topological modular forms}\label{sec:constr}
The aim of this section is to construct connective spectra $\tmf(\Gamma)$ of topological modular forms and thereby prove \cref{MainTheorem1}. Here $\Gamma$ denotes a congruence subgroup $\Gamma$ in the following sense, which is a bit more restrictive than the standard definition. 
\begin{defi}
	We call $\Gamma\subset SL_2(\Z)$ a \emph{congruence subgroup of level $n$} if $\Gamma = \Gamma(n)$ or $\Gamma_1(n) \subset \Gamma \subset \Gamma_0(n)$.\footnote{We refer to \cite{D-S05} for background on the congruence subgroups $\Gamma_1(n)$, $\Gamma(n)$ and $\Gamma_0(n)$ and their relationship to moduli of elliptic curves. This material is though barely necessary for the present paper as we use the congruence subgroups primarily as notation.}
\end{defi}
As explained in \cite{HL13} and \cite[Section 2.1]{MeiTopLevel}, we can associate with every such $\Gamma$ a (non-connective and non-periodic) $E_{\infty}$-ring spectrum $\Tmf(\Gamma)$. (See also \cite[Theorem 5.2]{Sto12} for the case of $\Gamma(n)$.) These arise as global sections of sheaves of $E_{\infty}$-ring spectra $\OO^{top}$ on stacks $\MMb(\Gamma)$ classifying generalized elliptic curves with certain level structures; the details will not be important for the purposes of this article, but see e.g.\ \cite{D-R73}, \cite{Con07}, \cite{Ces17}, \cite{MeiDecMod}. Our goal in this section is to construct a nice connective version $\tmf(\Gamma)$ for $\Tmf(\Gamma)$. For this, we will fix a localization $\Z_S$ of the integers and restrict mostly to tame congruence subgroups.
\begin{defi}We say that a congruence subgroup $\Gamma$ of level $n$ is \emph{tame} with respect to $\Z_S$ if $n\geq 2$ and $n$ is invertible in $\Z_S$; in the case $\Gamma_1(n)\subset \Gamma\subset \Gamma_0(n)$ we demand additionally that $\gcd(6,[\Gamma:\Gamma_1(n)])$ is invertible in $\Z_S$.\footnote{As the quotient $\Gamma_0(n)/\Gamma_1(n)$ is $(\Z/n)^\times$, the latter condition reduces to $\gcd(6, \varphi(n))$ being invertible in the case $\Gamma = \Gamma_0(n)$. Thus we require that $2$ is invertible and also $3$ if $n$ is divisible by a prime of the form $3k+1$ or by $9$.} \end{defi}
The definition ensures that the order of every automorphism of a point in $\MMb(\Gamma)$ is invertible and thus the stack is of cohomological dimension $1$. As explained in \cite[Section 2.1]{MeiTopLevel}, in this case $\pi_* \tau_{\geq 0}\Tmf(\Gamma)$ is concentrated in even degrees except for $\pi_1 \Tmf(\Gamma)$, which might be nonzero. (The smallest $n$ for which this happens is $23$.) Moreover, the even homotopy groups of $\Tmf(\Gamma)$ are precisely isomorphic to the ring of holomorphic modular forms $M(\Gamma; \Z[\frac1n])$. 

Following the lead of \cite[Proposition 11.1]{Law15} (and additional explanations by its author), we will first describe a general procedure to kill $\pi_1$ for $E_{\infty}$-rings that applies to $\tau_{\geq 0}\Tmf(\Gamma)$ for $\Gamma$ tame. We will then present a $C_2$-equivariant refinement that helps to define a nice version of $\tmf(\Gamma)$ also in some non-tame case (see \cref{constr:Gamma'}). We note that our techniques are only necessary if $\pi_1\Tmf(\Gamma)$ is non-trivial as else the usual connective cover defines a perfectly good version of $\tmf(\Gamma)$. 

\subsection{The non-equivariant argument}
Let $R$ be a connective $E_\infty$-ring spectrum with $\pi_0R$ an \'etale extension of $\Z_{S}$, a localization of $\Z$, and $\eta\cdot 1 = 0$; here, $\eta \in \pi_1\mathbb{S}$ is the Hopf element and $1\in \pi_0R$ the unit. (The relevant example for us is $R = \tau_{\geq 0}\Tmf(\Gamma)_S$ with $\pi_0R = \Z_S$ if $\Gamma_1(n)\subset \Gamma \subset \Gamma_0(n)$ and $\pi_0R = \Z_S[\zeta_n]$ if $\Gamma = \Gamma(n)$.) We want to construct a map $R' \to R$ of $E_\infty$-ring spectra, which is injective on $\pi_*$ and with cokernel $\pi_1R$. In the following, we localize everything implicitly at the set $S$. In particular, $\Z$ really means $\Z_S$ etc. 

Let $A$ first be a general $E_\infty$-ring spectrum. For an $A$-module $M$, we denote by 
\[\P_A(M) \simeq A \oplus M \oplus (M^{\tensor_A 2})_{h\Sigma_2} \oplus \cdots\]
the free unital $E_{\infty}$-$A$-algebra on $M$ (cf.\ \cite[3.1.3.14]{HA}). 
\begin{defi}
	Let $x\colon \Sigma^kA \to A$ be an $A$-linear map. We define its \emph{$E_\infty$-cone} $C^A(x)$ as the pushout $A\tensor_{\P_A(\Sigma^kA)} A$ of $E_{\infty}$-ring spectra. Here, the first map $\P_A(\Sigma^kA) \to A$ is the free $E_{\infty}$-map on $x$, while the second arises from applying $\P_A$ to the unique map $\Sigma^kA \to 0$. 
\end{defi}
Note that if $B$ is an $E_{\infty}$-$A$-algebra, we have $C^A(x) \tensor_A B \simeq C^B(x)$. Writing the usual cone $C(x)$ as the pushout $A \sqcup_{\Sigma^kA \oplus A} A$ in $A$-modules
  produces a map $C(x) \to C^A(x)$ via the inclusion $A \oplus \Sigma^kA \to \P^A(\Sigma^kA)$ of the first two summands and the identity $\id_A$. 
  \begin{lemma}\label{lem:splitinjective}
  	If $x=0$, the canonical map $C(x) \to C^A(x)$ is split as a map of $A$-modules. 
  \end{lemma}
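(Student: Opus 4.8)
Assume $x=0$ and put $P:=\P_A(\Sigma^kA)$; the plan is to write down an explicit $A$-module retraction of the canonical map $C(x)\to C^A(x)$. When $x=0$ the free $E_\infty$-map $P\to A$ appearing in both legs of the pushout square for $C^A(x)$ is the augmentation $\epsilon$, and $\epsilon$ has the unit $u\colon A\to P$ as a section. Hence there is a ``fold'' map $m\colon C^A(x)=A\tensor_P A\to A$ corepresented by the pair $(\id_A,\id_A)$, which retracts either canonical map $A\to C^A(x)$; and applying the exact functor $A\tensor_P(-)$ to the fibre sequence of $P$-modules $I\to P\xrightarrow{\epsilon}A$, with $I:=\fib(\epsilon)$ the augmentation ideal, produces a cofibre sequence $N\to A\xrightarrow{\kappa}C^A(x)$ where $N:=A\tensor_P I$. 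Together $m$ and $\kappa$ exhibit a splitting $C^A(x)\simeq A\oplus\Sigma N$ of $A$-modules, while on the other side $C(x)=\cofib(\Sigma^kA\xrightarrow{0}A)\simeq A\oplus\Sigma^{k+1}A$.

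The first thing to verify is that the canonical map $C(x)\to C^A(x)$ is compatible with these splittings: it is $\kappa$ on the $A$-summand, and on $\Sigma^{k+1}A=\Sigma(\Sigma^kA)$ it is $\Sigma$ of the composite $\Sigma^kA\hookrightarrow I\xrightarrow{\epsilon_*}N$, where $\Sigma^kA$ is the weight-$1$ summand of $I=\bigoplus_{n\geq1}\bigl((\Sigma^kA)^{\tensor_A n}\bigr)_{h\Sigma_n}$ and $\epsilon_*$ is reduction of scalars, $I=P\tensor_P I\to A\tensor_P I=N$. I would see this through the two-sided bar construction $C^A(x)=|\mathrm{Bar}_\bullet(A,P,A)|$: its $1$-skeleton is $A\oplus\Sigma I$ (the attaching map, which is $d_0-d_1$ on the normalised piece $I$, equals $\epsilon-\epsilon=0$), it is canonically the pushout $A\sqcup_P A$ of $A$-modules (along the span $A\xleftarrow{\epsilon}P\xrightarrow{\epsilon}A$) through which — by its very construction — the map $C(x)\to C^A(x)$ factors, and under $A\oplus\Sigma^kA\hookrightarrow P$ the cone $C(x)=A\sqcup_{A\oplus\Sigma^kA}A$ is carried into it by $\id_A$ plus the suspension of the weight-$1$ inclusion.

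To build the other half of the retraction I would use the weight filtration of $I$. The weight-$\geq2$ part $I':=\bigoplus_{n\geq2}\bigl((\Sigma^kA)^{\tensor_A n}\bigr)_{h\Sigma_n}$ is a $P$-submodule of $I$ (the multiplication $I\tensor_A I\to I$ lands in $I'$), and the quotient $I/I'=\Sigma^kA$ carries the module structure restricted along $\epsilon$. Applying $A\tensor_P(-)$ to $I'\to I\to\epsilon^*(\Sigma^kA)$ and invoking the projection formula $A\tensor_P\epsilon^*(\Sigma^kA)\simeq\Sigma^kA\tensor_A(A\tensor_P A)=\Sigma^kC^A(x)$ gives a natural map $q\colon N\to\Sigma^kC^A(x)$. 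Because $q=A\tensor_P(I\twoheadrightarrow I/I')$, because $\epsilon_*$ is natural, and because $\Sigma^kA\hookrightarrow I\twoheadrightarrow I/I'$ is the identity of $\Sigma^kA$, the composite of the first three arrows in
\[
\Sigma^kA\ \hookrightarrow\ I\ \xrightarrow{\ \epsilon_*\ }\ N\ \xrightarrow{\ q\ }\ \Sigma^kC^A(x)\ \xrightarrow{\ \Sigma^km\ }\ \Sigma^kA
\]
is $\Sigma^k\kappa$, so the whole composite is $\Sigma^k(m\circ\kappa)=\id$. Hence the $A$-module map $r\colon C^A(x)\to C(x)$ with components $m\colon C^A(x)\to A$ and $C^A(x)\xrightarrow{\partial}\Sigma N\xrightarrow{\Sigma q}\Sigma^{k+1}C^A(x)\xrightarrow{\Sigma^{k+1}m}\Sigma^{k+1}A$ — $\partial$ being the connecting map of $N\to A\to C^A(x)$ — retracts the canonical map: on the $A$-summand because $m\kappa=\id$ and $\partial\kappa=0$, and on the $\Sigma^{k+1}A$-summand by the second paragraph together with the display above.

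The step I expect to be the main obstacle is precisely that identification in the second paragraph: the canonical map $C(x)\to C^A(x)$ is manufactured purely at the level of $A$-modules from the inclusion $A\oplus\Sigma^kA\hookrightarrow\P_A(\Sigma^kA)$, and one has to match it carefully against the skeletal and weight filtrations of the bar construction, keeping straight throughout the difference between pushouts taken in $A$-modules and pushouts taken in $E_\infty$-$A$-algebras.
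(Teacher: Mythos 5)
Your argument is correct in substance, but it takes a genuinely different and much heavier route than the paper. The paper's proof is a single functorial observation: for $x=0$ the $A$-module pushout square \eqref{eq:sqzero} presenting $C(0)\simeq A\oplus\Sigma^{k+1}A$ is the image of the trivial square \eqref{eq:trivial} under the square-zero extension functor $\Mod_A\to\CAlg_A$, while the square defining $C^A(0)$ is the image of the same trivial square under $\P_A$; the counit $\P_A(\Sigma^kA)\to A\oplus\Sigma^kA$ (your ``augmentation plus weight-one projection'') therefore induces a map of squares and hence, by the universal property of the $E_\infty$-pushout, a map $C^A(0)\to C(0)$, and the composite $C(0)\to C^A(0)\to C(0)$ is the identity by the universal property of \eqref{eq:sqzero}, because $A\oplus\Sigma^kA\to\P_A(\Sigma^kA)\to A\oplus\Sigma^kA$ is the identity. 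Unwound, the paper's retraction is essentially your $(m,\ \Sigma^{k+1}m\circ\Sigma q\circ\partial)$, but it is produced without any bar-construction or weight-filtration bookkeeping; in particular the step you rightly flag as the main obstacle --- matching the module-pushout definition of $C(0)\to C^A(0)$ against the skeletal filtration of $\mathrm{Bar}_\bullet(A,P,A)$ and the cofibre sequence $A\tensor_P I\to A\xrightarrow{\kappa}A\tensor_P A$ --- simply never arises. That step does go through (compare the cofibres of the coprojections $A\to A\sqcup_P A$ and $A\xrightarrow{\kappa}A\tensor_P A$; the induced map on cofibres is the base-change unit $\Sigma I\to\Sigma(A\tensor_P I)$), so your route yields a proof, and it even gives a bit more than the lemma asks: the explicit splitting $C^A(0)\simeq A\oplus\Sigma(A\tensor_P I)$ together with the identification of the canonical map within it. Two small points to watch: keep consistent track of which coprojection $A\to A\tensor_P A$ your $\kappa$ and your base-change units are (both are retracted by $m$, so either choice works, but they must match throughout); and your composite $r\circ\mathrm{can}$ is a priori only triangular with identity diagonal entries --- you have not shown that the component $\Sigma^{k+1}A\to A$ vanishes --- so it is an equivalence rather than the identity, which still exhibits the splitting after replacing $r$ by $(r\circ\mathrm{can})^{-1}\circ r$, but this should be said explicitly.
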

\begin{proof}
	The pushout square
	\begin{align} \label{eq:sqzero}
	\xymatrix{A \oplus \Sigma^kA \ar[r]\ar[d] & A \ar[d] \\
		A \ar[r] & C(0) \simeq A \oplus \Sigma^{k+1}A
	}
	\end{align}
	arises from the pushout square
	\begin{align}\label{eq:trivial}
	\xymatrix{\Sigma^kA \ar[r]\ar[d] & 0 \ar[d] \\
	0 \ar[r] & \Sigma^{k+1}A}
	\end{align}
	via the functor $\Mod_A \to \CAlg_A$ of square-zero extension. In particular, it is a diagram of $E_{\infty}$-$A$-algebras. As the $E_{\infty}$-pushout square (P) defining $C^A(0)$ arises from \eqref{eq:trivial} as well, but via $\P_A$, we see that the square \eqref{eq:sqzero} receives a map from the square (P). The resulting map $C^A(0) \to C(0)$ defines a splitting of $C(0) \to C^{A}(0)$ by the universal property of the pushout square \eqref{eq:sqzero}. 
\end{proof}
   
We will apply our general consideration to the connective $E_{\infty}$-ring spectrum $R$ we have fixed. As $\eta$ is zero in $\pi_*R$, we obtain an $E_\infty$-map $C^{\S}(\eta) \to R$. This induces an $E_\infty$-map $\tau_{\leq 1} C^{\S}(\eta) \to \tau_{\leq 1}R$ (see \cite[Proposition 4.35]{HHR}). 

\begin{lemma}\label{lem:connectivity}
The $1$-coconnective cover $\tau_{\leq 1}C^{\S}(\eta)$ is equivalent to $H\Z$. 
\end{lemma}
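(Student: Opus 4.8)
The plan is to read off $\pi_0$ and $\pi_1$ of $C^{\S}(\eta)$ directly: these will turn out to be $\Z$ and $0$, and since $C^{\S}(\eta)$ is a connective $E_{\infty}$-ring, its $1$-truncation then agrees with its $0$-truncation, which is the Eilenberg--MacLane ring $H\pi_0 C^{\S}(\eta)=H\Z$.

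To access $\pi_{\leq 1}$, I would present the defining $E_{\infty}$-pushout $\S\otimes_{\P_{\S}(\Sigma\S)}\S$ as the realization of the two-sided bar construction $\mathrm{Bar}_\bullet(\S,P,\S)$, where $P=\P_{\S}(\Sigma\S)$, the left copy of $\S$ is a $P$-module via the free $E_{\infty}$-map $\phi_\eta$ classifying $\eta\in\pi_1\S$, and the right copy via the augmentation $\epsilon\colon P\to\S$. Splitting off the augmentation gives $P\simeq\S\oplus\bar P$ with $\bar P\simeq\bigoplus_{n\geq 1}(\Sigma\S)^{\otimes n}_{h\Sigma_n}$; this summand has $\pi_0\bar P=0$ and $\pi_1\bar P\cong\pi_1\Sigma\S\cong\Z$, generated by the suspension $y$ of the unit, and $\phi_\eta$ carries $y$ to $\eta$ by construction. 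The skeletal filtration of the realization has $\mathrm{sk}_0\simeq\S$ and $\mathrm{sk}_n/\mathrm{sk}_{n-1}\simeq\Sigma^n\bar P^{\otimes n}$ (the normalized bar filtration, independent of the two module structures); since $\bar P$ has vanishing $\pi_{\leq 0}$, this is $(2n-1)$-connected, hence $3$-connected for every $n\geq 2$, so the inclusion $\mathrm{sk}_1\hookrightarrow C^{\S}(\eta)$ is an equivalence on $\pi_j$ for $j\leq 3$ — in particular on $\pi_0$ and $\pi_1$.

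It then remains to analyze $\mathrm{sk}_1$. Its cofiber over $\mathrm{sk}_0=\S$ is $\Sigma\bar P$, with attaching map the alternating sum of the two face maps $\bar P\rightrightarrows\S$, namely $\phi_\eta|_{\bar P}-\epsilon|_{\bar P}=\phi_\eta|_{\bar P}$, since $\epsilon$ annihilates $\bar P$; thus $\mathrm{sk}_1\simeq\cofib(\bar P\xrightarrow{\phi_\eta}\S)$. The long exact sequence then yields $\pi_0\mathrm{sk}_1\cong\Z$ (because $\pi_0\bar P=0$) and $\pi_1\mathrm{sk}_1\cong\coker(\pi_1\bar P\xrightarrow{\phi_\eta}\pi_1\S)$; as $\phi_\eta$ sends $y$ to $\eta$ and $\pi_1\S=\Z/2\{\eta\}$, this cokernel vanishes. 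Combining, $\pi_0 C^{\S}(\eta)\cong\Z$ and $\pi_1 C^{\S}(\eta)=0$, so $\tau_{\leq 1}C^{\S}(\eta)\simeq H\Z$ as $E_{\infty}$-rings.

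The step I would be most careful with is the identification of that attaching map with $\phi_\eta|_{\bar P}$: it is exactly the surjectivity of $\pi_1\bar P\xrightarrow{\phi_\eta}\pi_1\S$ that forces $\pi_1$ to die, and this is precisely the place where coning off $\eta$ differs from coning off $0$ — for $x=0$ the relevant cokernel would be all of $\Z/2$ and one would merely recover $\tau_{\leq 1}\S$. The remaining ingredients (the bar presentation of the pushout, the associated graded of its skeletal filtration, and the connectivity bound for $\bar P^{\otimes n}$) are standard.
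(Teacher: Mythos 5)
Your proof is correct, but it takes a genuinely different route from the paper. The paper never computes homotopy groups of $C^{\S}(\eta)$ from a filtration of the pushout; instead it studies the canonical map $C(\eta)\to C^{\S}(\eta)$ from the ordinary cone and shows it is $2$-connected: by the Hurewicz theorem this may be tested after tensoring with $H\Z$, where $\eta$ becomes null, so \cref{lem:splitinjective} makes $C(\eta\otimes H\Z)\to C^{H\Z}(\eta\otimes H\Z)\simeq \P^{H\Z}(\Sigma^2 H\Z)$ a split inclusion of the first two summands, hence an isomorphism in low degrees; the conclusion then follows from $\pi_0 C(\eta)\cong\Z$, $\pi_1 C(\eta)=0$. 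You instead present $C^{\S}(\eta)=\S\otimes_{\P_{\S}(\Sigma\S)}\S$ via the two-sided bar construction, use the normalized skeletal filtration with graded pieces $\Sigma^n\bar P^{\otimes n}$ (which are $(2n-1)$-connected since $\bar P$ is $0$-connected), identify $\mathrm{sk}_1\simeq\cofib(\bar P\xrightarrow{\phi_\eta}\S)$, and read off $\pi_0\cong\Z$, $\pi_1=0$ from the surjectivity of $\phi_\eta$ on $\pi_1$. The two arguments rest on the same underlying connectivity estimate for the higher (symmetric-power) terms, but the paper's version reuses its splitting lemma and the Hurewicz reduction to $H\Z$, avoiding simplicial bookkeeping such as the identification of the attaching map, whereas yours is more self-contained and makes explicit exactly where $\eta\neq 0$ enters (namely that $\eta$ generates $\pi_1\S$, so the cokernel on $\pi_1$ vanishes) — the same fact the paper uses implicitly through $\pi_1 C(\eta)=0$. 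Your closing remark that $\tau_{\leq 1}$ of a connective $E_\infty$-ring with $\pi_0=\Z$, $\pi_1=0$ is $H\Z$ multiplicatively is also consistent with how the paper uses the lemma afterwards.
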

\begin{proof}
We claim that the canonical map $C(\eta) \to C^{\S}(\eta)$ is $2$-connected. By the Hurewicz theorem, we can test this after tensoring with $H\Z$ and thus it suffices
 to show that the resulting map $C(\eta \tensor H\Z) \to C^{H\Z}(\eta\tensor H\Z)$ is $2$-connected. 
 But $\eta \tensor H\Z$ agrees with the $0$-map $\Sigma H\Z \to H\Z$. Thus, we have to show that
\[H\Z \oplus \Sigma^2H\Z \to C^{H\Z}(\eta \tensor H\Z) \simeq \P^{H\Z}{\Sigma^2H\Z} \simeq H\Z \oplus \Sigma^2H\Z \oplus (\Sigma^4H\Z)_{hC_2}\oplus \cdots \]
is $2$-connected. As noted above, the map is split injective and thus must be indeed an isomorphism on $\pi_i$ even for $i\leq 3$. 
\end{proof}

%
%
%
The inclusion of $1$-truncated connective $E_{\infty}$-ring spectra into all connective $E_{\infty}$-ring spectra admits a left adjoint by \cite[Proposition 5.5.6.18]{HTT} and \cite[Proposition 7.1.3.14]{HA}. By \cite[Proposition 7.1.3.15(3)]{HA} it agrees with $\tau_{\leq 1}$ on underlying spectra. 

By \cite[Theorem 7.5.0.6]{HA}, we can extend the $E_{\infty}$-map $H\Z = \tau_{\leq 1}C^{\mathbb{S}}(\eta) \to \tau_{\leq 1}R$ to an $E_{\infty}$-map $H\pi_0R \to \tau_{\leq 1}R$ since the map $\Z \to \pi_0R$ is \'etale. Define now $R'$ via the homotopy pullback square
\begin{equation}\label{eq:square}\xymatrix{R'\ar[r]\ar[d] & H\pi_0R \ar[d] \\
R \ar[r] & \tau_{\leq 1}R} 
\end{equation}
This construction provides the existence part of the following proposition. 

\begin{prop}\label{prop:killingpi1}
	Let $R$ be a connective $E_{\infty}$-ring spectrum such that $\pi_0R$ is an \'etale extension of a localization $\Z_S$ of the integers and $\eta\cdot 1 = 0$ in $\pi_1R$. Then there exists a morphism $R' \to R$ of $E_{\infty}$-ring spectra inducing an isomorphism on $\pi_i$ for $i\neq 1$ and satisfying $\pi_1R' = 0$. Moreover, for every other $R'' \to R$ with these properties, there is an equivalence $R''\to R'$ of $E_{\infty}$-ring spectra over $R$. 
\end{prop}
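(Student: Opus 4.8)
The plan is to verify in turn that the $R'$ defined by the pullback square \eqref{eq:square} has the stated homotopy groups, and then to establish the universal property. For the first part, I would run the Mayer--Vietoris (Milnor) long exact sequence attached to the homotopy pullback square \eqref{eq:square}. Since $R$ and $H\pi_0R$ are connective and the map $R \to \tau_{\leq 1}R$ is an isomorphism on $\pi_0$ and $\pi_1$, while $H\pi_0R \to \tau_{\leq 1}R$ is an isomorphism on $\pi_0$ and has zero target in degree $1$, the Mayer--Vietoris sequence immediately gives $\pi_iR' \xrightarrow{\sim} \pi_iR$ for $i \neq 0,1$. In degree $1$ the relevant segment reads $\pi_1 H\pi_0 R = 0 \to \pi_1\tau_{\leq 1}R \to \pi_0 R' \to \pi_0 R \oplus \pi_0 H\pi_0 R \to \pi_0\tau_{\leq 1}R$; here $\pi_1\tau_{\leq 1}R \cong \pi_1 R$ and both maps $\pi_0 R \to \pi_0\tau_{\leq 1}R$, $\pi_0 H\pi_0R \to \pi_0\tau_{\leq 1}R$ are isomorphisms, so the difference map is surjective with kernel $\pi_0R$, forcing $\pi_1\tau_{\leq 1}R \to \pi_0R'$ to be the zero map and hence $\pi_0R' \xrightarrow{\sim} \pi_0R$, together with $\pi_1 R' = 0$. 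Finally, one checks $\pi_i R' = 0$ for $i<0$ since all three corners are connective, so $R'$ is connective with the advertised homotopy groups and $R' \to R$ has the required properties.

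For the uniqueness part, suppose $R'' \to R$ is any $E_\infty$-ring map that is an isomorphism on $\pi_i$ for $i \neq 1$ and has $\pi_1 R'' = 0$. Since $R''$ is connective and $1$-truncated in the appropriate sense — more precisely, since $\pi_1 R'' = 0$, the natural map $R'' \to \tau_{\leq 1}R''$ factors through $\tau_{\leq 0}R'' = H\pi_0R''$, and $\pi_0R'' \xrightarrow{\sim} \pi_0R$ — the composite $R'' \to R \to \tau_{\leq 1}R$ factors through $H\pi_0R'' \simeq H\pi_0R$. I would produce this factorization as an $E_\infty$-map: applying the left adjoint $\tau_{\leq 1}$ to $R'' \to R$ gives $\tau_{\leq 1}R'' \to \tau_{\leq 1}R$, and since $\pi_1R'' = 0$ we have $\tau_{\leq 1}R'' \simeq \tau_{\leq 0}R'' = H\pi_0R''$, yielding an $E_\infty$-ring map $H\pi_0R \simeq H\pi_0R'' \to \tau_{\leq 1}R$; this must agree with the map $H\pi_0R \to \tau_{\leq 1}R$ chosen in the construction because both are $E_\infty$-sections of $\tau_{\leq 1}R$ over $H\pi_0R = \tau_{\leq 1}(H\pi_0 R)$ extending the structure map — the space of such sections is (weakly) contractible by the étale-ness input and \cite[Theorem 7.5.0.6]{HA}, which is exactly what pins the factorization down. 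Granting that, the commuting square
\begin{equation*}
\xymatrix{R'' \ar[r] \ar[d] & H\pi_0R \ar[d] \\ R \ar[r] & \tau_{\leq 1}R}
\end{equation*}
induces an $E_\infty$-map $R'' \to R'$ over $R$ by the universal property of the pullback \eqref{eq:square}, and the Mayer--Vietoris comparison shows it is an isomorphism on all homotopy groups, hence an equivalence.

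The main obstacle is the uniqueness clause, and within it the subtle point is rigidifying everything at the level of $E_\infty$-ring maps rather than merely spectra: one must know that the $E_\infty$-map $H\pi_0R \to \tau_{\leq 1}R$ used in the construction is \emph{canonical} (so that any competitor $R''$ sees the same map), and that the factorization $R'' \to H\pi_0R$ exists in $\CAlg$ and is compatible. Both rest on the étale rigidity statement \cite[Theorem 7.5.0.6]{HA}: the space of $E_\infty$-algebra maps out of $H$ of an étale $\Z_S$-algebra is discrete (in fact, computed by the algebraic $\Hom$), which forces uniqueness of the relevant sections and makes the comparison map canonical. Once the maps are known to be essentially unique, the equivalence $R'' \simeq R'$ over $R$ follows formally from the pullback property together with the homotopy-group computation above. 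I would also remark that this is where the hypothesis on $\pi_0R$ being étale over $\Z_S$ is genuinely used, beyond merely extending $H\Z \to \tau_{\leq 1}R$ to $H\pi_0R \to \tau_{\leq 1}R$ in the existence part.
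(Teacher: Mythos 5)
Your overall architecture matches the paper's: define $R'$ by the pullback square \eqref{eq:square}, read off its homotopy groups, and deduce uniqueness from a canonicity statement for the map $H\pi_0R \to \tau_{\leq 1}R$ via \'etale rigidity. The homotopy-group computation is fine (one small slip: the vanishing of the boundary map $\pi_1\tau_{\leq 1}R \to \pi_0R'$ follows from surjectivity of $\pi_1R\oplus\pi_1H\pi_0R \to \pi_1\tau_{\leq 1}R$, not from surjectivity of the degree-zero difference map as you argue), and your device of identifying $H\pi_0R''\simeq H\pi_0R$ along $R''\to R$ so that both candidate maps induce the identity on $\pi_0$ is a legitimate way to sidestep the paper's discussion of a possibly nontrivial automorphism $f$ of $\pi_0R$.

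However, there is a genuine gap at the crux of the uniqueness argument. You assert that ``the space of $E_\infty$-algebra maps out of $H$ of an \'etale $\Z_S$-algebra is discrete, computed by the algebraic $\Hom$,'' citing only \cite[Theorem 7.5.0.6]{HA}. That theorem is a \emph{relative} statement: it identifies the space of $H\Z$-algebra maps $H\pi_0R \to \tau_{\leq 1}R$ with $\Hom_{\Z}(\pi_0R,\pi_0(\tau_{\leq 1}R))$, and it can only compare your two candidate maps after they are known to be maps of $H\Z$-algebras \emph{for the same $H\Z$-algebra structure} on $\tau_{\leq 1}R$. A priori, the map built in the construction (which factors through $\tau_{\leq 1}C^{\S}(\eta)\simeq H\Z$) and the map coming from $R''$ (restricted along $H\Z \to H\pi_0R''$) could endow $\tau_{\leq 1}R$ with non-homotopic $H\Z$-algebra structures, and then \'etale rigidity says nothing. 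What is missing is precisely the statement $\Map_{\CAlg}(H\Z,\tau_{\leq 1}R)\simeq \ast$, which is not formal: the paper proves it by the chain $\Map_{\CAlg}(H\Z,\tau_{\leq 1}R)\simeq\Map_{\CAlg}(C^{\S}(\eta),\tau_{\leq 1}R)\simeq\{\text{nullhomotopies of }\eta\text{ in }\tau_{\leq 1}R\}\simeq\Map_{\Sp}(\Sigma^2\S,\tau_{\leq 1}R)\simeq\ast$, using the hypothesis $\eta\cdot 1=0$, the $1$-truncatedness of the target, and the identification $\tau_{\leq 1}C^{\S}(\eta)\simeq H\Z$ of \cref{lem:connectivity} (this is the raison d'\^etre of the $E_\infty$-cone construction in this section). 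Once you insert this step, the rest of your argument — rigidity over $H\Z$ pinning down $H\pi_0R\to\tau_{\leq 1}R$, the induced map $R''\to R'$ from the pullback property, and the Mayer--Vietoris comparison showing it is an equivalence — goes through and agrees with the paper's proof.
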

\begin{proof}
	It remains to show uniqueness. We localize again everything implicitly at $S$. We first note that the map $H\Z \to \tau_{\leq 1} R$ constructed above is actually the unique $E_{\infty}$-map with this source and target. Indeed: For connectivity reasons, we have an equivalence of mapping spaces $\Map_{\CAlg}(H\Z, \tau_{\leq 1} R) \simeq \Map_{\CAlg}(C^{\S}(\eta), \tau_{\leq 1} R)$. The latter is equivalent to the space of nullhomotopies of $\eta$ in $\tau_{\leq 1}R$, i.e.\ $\Map_{\Sp}(\Sigma^2\mathbb{S}, \tau_{\leq 1}R) \simeq \ast$. Using that thus $\tau_{\leq 1} R$ has an essentially unique structure of an $H\Z$-$E_{\infty}$-algebra, we deduce again from \cite[Theorem 7.5.0.6]{HA} that the space of $E_{\infty}$-maps from $H\pi_0R$ to $\tau_{\leq 1}R$ is equivalent to the set of ring homomorphisms $\pi_0R \to \pi_0R$.
	
	Given now $R'' \to R$ as in the proposition, we obtain a map $R'' \to \tau_{\leq 1}R'' \simeq H\pi_0R \to \tau_{\leq 1}R$. We see that $R''$ arises as a pullback of a diagram of the same shape as \eqref{eq:square}, but possibly with a map $H\pi_0R \to \tau_{\leq 1}R$ inducing a different isomorphism $f$ on $\pi_0$ than the identity. The paragraph above implies that using the map $f$ on $H\pi_0R$ we obtain an equivalence between the cospans constructing $R'$ and $R''$ and thus between $R'$ and $R''$ over $R$.
\end{proof}

To apply this to topological modular forms, we need the following two lemmas.
\begin{lemma}\label{lem:eta}
	Let $\Gamma$ be a tame congruence subgroup with respect to a localization $\Z_S$. Then $\eta$ is zero in $\pi_1\Tmf(\Gamma)_S$.
\end{lemma}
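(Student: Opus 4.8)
The plan is to identify the class $\eta\cdot 1$ in $\pi_1\Tmf(\Gamma)_S$ with the obstruction to lifting the Hasse invariant modulo $2$, using the collapsing descent spectral sequence together with the Bockstein.

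First, since $\eta$ is $2$-torsion it suffices to treat the statement after localizing at $2$; we may thus assume $2$ is not invertible in $\Z_S$, for otherwise $\Tmf(\Gamma)_{(2)}\simeq 0$. As $\Gamma$ is tame, $\MMb(\Gamma)$, and hence $\MMb(\Gamma)_{(2)}$, has cohomological dimension $1$, so by \cite[Section 2.1]{MeiTopLevel} the descent spectral sequence for $\Tmf(\Gamma)_{(2)}$ collapses with $E_2=E_\infty$ concentrated in filtrations $0$ and $1$, and the edge homomorphism is an isomorphism $\pi_1\Tmf(\Gamma)_{(2)}\cong H^1(\MMb(\Gamma)_{(2)};\omega)$. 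The element $\eta\cdot 1$ is the image of $\eta\in\pi_1\Tmf$ under the map $\Tmf\to\Tmf(\Gamma)$ induced by the forgetful morphism $f\colon\MMb(\Gamma)\to\MMb_{ell}$, and on $E_\infty$-pages this map is the pullback $f^*$ on $H^1(-;\omega)$.

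Next one needs $\eta\in\pi_1\Tmf_{(2)}$ cohomologically. Here $\pi_1\Tmf_{(2)}\cong\Z/2$ is generated by $\delta(A)$, where $A\in H^0(\MMb_{ell,\F_2};\omega)=M_1(\SL_2(\Z);\F_2)$ is the Hasse invariant — the $a_1$-coefficient of the universal Weierstrass curve in characteristic $2$ — and $\delta$ is the connecting map of the Bockstein sequence $0\to\omega\xrightarrow{2}\omega\to\omega/2\to 0$; that $\eta$ sits in descent filtration exactly $1$, detected by $\delta(A)$, can be seen after $K(1)$-local completion at $p=2$, where $\Tmf$ becomes a form of real $K$-theory and $\eta$ is detected in filtration $1$. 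By naturality of the Bockstein, the image of $\eta$ in $\pi_1\Tmf(\Gamma)_{(2)}\cong H^1(\MMb(\Gamma)_{(2)};\omega)$ is $\delta(f^*A)$, with $f^*A\in M_1(\Gamma;\F_2)$ the Hasse invariant of $\MMb(\Gamma)_{\F_2}$. By exactness of the Bockstein long exact sequence on $\MMb(\Gamma)_{(2)}$, this class vanishes exactly when $f^*A$ lifts to a weight-$1$ modular form for $\Gamma$ over $\Z_S$. Producing such a lift — equivalently, a weight-$1$ form for $\Gamma$ over $\Z_S$ whose $q$-expansions at all cusps are $\equiv 1\pmod 2$ — is the main obstacle; for level $n\ge 3$ (which covers precisely the cases with $2$ not invertible, the remaining tame cases such as $\Gamma_0(n)$, $\Gamma(2)$, $\Gamma_1(2)$ having $2$ invertible) it is classical, via a suitable weight-$1$ Eisenstein series or the theta series of an imaginary quadratic order, whose nonconstant coefficients are even (compare the discussion of weight-$1$ forms in \cite{MeiDecMod}). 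With that lifting in hand the rest of the argument is formal.
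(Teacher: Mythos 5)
Your reduction is fine as far as it goes: the collapsing descent spectral sequence identifies $\pi_1\Tmf(\Gamma)_{(2)}$ with $H^1(\MMb(\Gamma)_{(2)};\omega)$, the image of $\eta$ is the pullback of the generator of $H^1(\MMb_{ell,\Z_{(2)}};\omega)$, that generator is the Bockstein of the mod-$2$ Hasse invariant $A$, and so by the Bockstein exact sequence the lemma is \emph{equivalent} to the mod-$2$, weight-$1$ Hasse invariant lifting to a holomorphic weight-$1$ form for $\Gamma$ over $\Z_{(2)}$. The problem is that this last step is the entire mathematical content of the lemma, and your justification for it is an unsupported appeal to ``classical'' facts. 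Theta series of imaginary quadratic orders only exist at particular levels (essentially the discriminant, with nebentypus) and are only visibly $\equiv 1 \pmod 2$ at the cusp $\infty$, whereas a lift of the Hasse invariant must have odd constant term at \emph{every} cusp; weight-$1$ Eisenstein series for $\Gamma_1(n)$ carry nebentypus, have cyclotomic coefficients, and have constant terms governed by $B_{1,\chi}$, whose $2$-adic integrality and parity are delicate, so extracting from them a $\Z_{(2)}$-form reducing to $A$ at all cusps for arbitrary odd $n$ requires a genuine argument. That weight-$1$ lifting questions mod $2$ are subtle is illustrated in the paper itself: the reduction $M_1(\Gamma_1(n);\Z_{(2)})\to M_1(\Gamma_1(n);\F_2)$ fails to be surjective already for $n=65$. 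The Hasse invariant does lift in the tame cases, but that is precisely what the paper obtains by citing \cite[Proposition 2.16]{MeiDecMod}; asserting it without proof leaves the crux of the lemma unestablished.

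Two smaller points. First, the intermediate case $\Gamma_1(n)\subsetneq\Gamma\subsetneq\Gamma_0(n)$ needs separate treatment: the paper uses tameness (the group $G=\Gamma/\Gamma_1(n)$ has order invertible at $2$) to identify $H^1(\MMb(\Gamma)_{(2)};\omega)$ with the $G$-invariants of $H^1(\MMb(\Gamma_1(n))_{(2)};\omega)$ and deduce the vanishing from the $\Gamma_1(n)$-case; in your setup you would instead have to produce a lift at level $\Gamma$, e.g.\ by averaging a $\Gamma_1(n)$-lift over the odd-order group $G$, which you do not address. Second, when $2$ is invertible in $\Z_S$ the spectrum $\Tmf(\Gamma)_S$ localized at $2$ is rational, not zero; your reduction to the $2$-local case still works, but only because the image of $\eta$ is a torsion class in a torsion-free group, not for the reason you state.
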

\begin{proof}
According to \cite[Proposition 2.5]{MeiTopLevel}, the descent spectral sequence 
\[H^s(\MMb(\Gamma); \omega^{\tensor t}) \Rightarrow \pi_{2t-s}\Tmf(\Gamma) \]
for $\Tmf(\Gamma)_S$ is concentrated in lines $0$ and $1$. Thus $\pi_1\Tmf(\Gamma)_S \cong H^1(\MMb(\Gamma)_S; \omega)$ and it suffices to show that the image of $\eta$ in $H^1(\MMb(\Gamma)_S; \omega)$ is trivial. This is the content of \cite[Proposition 2.16]{MeiDecMod} unless $\Gamma_1(n) \subsetneq \Gamma \subsetneq \Gamma_0(n)$. For the reminder of the proof, assume that we are in this case and set $G = \Gamma/\Gamma_1(n)$.

We will argue that the map 
\[H^1(\MMb(\Gamma)_{(2)}; \omega) \to H^1(\MMb(\Gamma_1(n)_{(2)}; \omega)\]
is isomorphic to the inclusion of $G$-invariants. As $\eta$ vanishes in the target, this will imply the vanishing of $\eta$ in the source. 

The map $\MMb(\Gamma_1(n))_{(2)} \to \MMb(\Gamma)_{(2)}$ induces a map $c\colon \XX = [\MMb(\Gamma_1(n))_{(2)}/G] \to \MMb(\Gamma)_{(2)}$ from the stack quotient. Denote the pullback of $\omega$ to $\XX$ also by $\omega$. By \cite[Lemma A.2]{MeiTopLevel}, the induced map $H^1(\MMb(\Gamma)_{(2)};\omega) \to H^1(\XX; \omega)$ is an isomorphism. Moreover, the descent spectral sequence
\[H^i(G; H^j(\MMb(\Gamma_1(n))_{(2)}; \omega)) \Rightarrow H^{i+j}(\XX; \omega) \]
is concentrated in the zero-line since the order of $G$ is invertible in $\Z_{(2)}$ by the tameness of $\Gamma$. Thus, 
\[H^1(\MMb(\Gamma)_{(2)}; \omega)\cong H^1(\XX; \omega) \to H^1(\MMb(\Gamma_1(n)_{(2)}; \omega)\]
is indeed the inclusion of $G$-invariants. 
\end{proof}

\begin{lemma}
Let $\Gamma$ be a tame congruence subgroup with respect to a localization $\Z_S$. Then $\pi_0\Tmf(\Gamma) \cong \Z_S$ if $\Gamma_1(n) \subset \Gamma \subset \Gamma_0(n)$ and $\pi_0\Tmf(\Gamma) \cong \Z_S[\zeta_n]$ if $\Gamma = \Gamma(n)$.
\end{lemma}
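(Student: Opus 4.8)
The plan is to reduce the assertion to a statement about global functions on the compactified moduli stack. By \cite[Proposition 2.5]{MeiTopLevel}, the input also used in \cref{lem:eta}, the descent spectral sequence $H^s(\MMb(\Gamma)_S;\omega^{\tensor t})\Rightarrow\pi_{2t-s}\Tmf(\Gamma)_S$ for a tame $\Gamma$ is concentrated in the lines $s=0,1$. In the $0$-stem the only possibly nonzero entry is therefore $E_2^{0,0}=H^0(\MMb(\Gamma)_S;\OO)$ (an entry on the $1$-line would require $t=\tfrac12$), and for degree reasons no differential can enter or leave it. Hence $\pi_0\Tmf(\Gamma)_S\cong H^0(\MMb(\Gamma)_S;\OO)$, and it remains to compute this ring of functions. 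For that I would use the standard base-change principle: if $f\colon\XX\to\Spec A$ is a proper, finitely presented, flat morphism of Deligne--Mumford stacks over a Noetherian ring $A$ all of whose geometric fibres are connected and reduced, then the unit $A\to H^0(\XX;\OO)$ is an isomorphism (by cohomology and base change $f_*\OO$ is a line bundle whose formation commutes with base change, and it coincides with $\OO_{\Spec A}$ on every fibre, since a proper, geometrically connected and geometrically reduced fibre has $H^0(\OO)$ equal to its base field, hence it is an isomorphism).

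In the case $\Gamma_1(n)\subset\Gamma\subset\Gamma_0(n)$ I apply this with $\XX=\MMb(\Gamma)_S$ and $A=\Z_S$. Properness and flatness of $\MMb(\Gamma)_S$ over $\Z_S$ are part of the construction of the compactified moduli stacks \cite{D-R73,Con07}; since $n$ is invertible in $\Z_S$, the stack $\MMb(\Gamma)_S$ is moreover smooth over $\Z_S$ as a Deligne--Mumford stack (étale-locally it looks like the smooth stack $\MMb(\Gamma_1(n))_S$), so its geometric fibres are smooth, in particular reduced. Geometric connectedness of the fibres follows from the classical irreducibility theorem for modular curves (Igusa; Deligne--Rapoport \cite{D-R73}): the geometric fibres of $\MMb(\Gamma_1(n))_S$ are connected, and since the forgetful morphism $\MMb(\Gamma_1(n))_S\to\MMb(\Gamma)_S$ is finite and surjective, so are those of $\MMb(\Gamma)_S$. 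The principle then gives $H^0(\MMb(\Gamma)_S;\OO)\cong\Z_S$. For $\Gamma=\Gamma(n)$ the stack $\MMb(\Gamma(n))_S$ is no longer geometrically connected over $\Z_S$, but the Weil pairing of the chosen basis of the $n$-torsion defines a morphism $w\colon\MMb(\Gamma(n))_S\to\Spec\Z_S[\zeta_n]$ onto the scheme of primitive $n$-th roots of unity; note that $\Z_S[\zeta_n]$ is étale over $\Z_S$ because $n$ is invertible in $\Z_S$. By \cite{D-R73} (see also \cite[Theorem 5.2]{Sto12}), $w$ is proper, flat and has geometrically connected reduced fibres, so the principle applied with $A=\Z_S[\zeta_n]$ yields $H^0(\MMb(\Gamma(n))_S;\OO)\cong\Z_S[\zeta_n]$.

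The step I expect to be the main obstacle to write out carefully is the geometric connectedness of the fibres of these moduli stacks in \emph{every} residue characteristic coprime to $n$ — equivalently, the classical irreducibility of the modular curves $X_1(n)$ and $X(n)$ over $\Fpb$ for $p\nmid n$, with the $\varphi(n)$ geometric components of $X(n)$ matched by the $\varphi(n)$ geometric points of $\Spec\Z_S[\zeta_n]$. Everything else — the reduction via the descent spectral sequence, the cohomology-and-base-change argument, and the structural properties of $\MMb(\Gamma)_S$ over $\Z_S$ — is formal and can be extracted from \cite{D-R73,Con07}.
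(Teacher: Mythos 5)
Your reduction of $\pi_0\Tmf(\Gamma)_S$ to $H^0(\MMb(\Gamma)_S;\OO)$ via the two-line descent spectral sequence is exactly the paper's first step. Where you genuinely diverge is in computing the ring of global functions: the paper simply quotes the classical computation for $\Gamma_0(n)$, $\Gamma_1(n)$ and $\Gamma(n)$ (\cite[Proposition 2.13]{MeiDecMod}) and handles an intermediate $\Gamma_1(n)\subsetneq\Gamma\subsetneq\Gamma_0(n)$ by identifying $H^0(\MMb(\Gamma);\OO)$ with the $G$-invariants of $H^0(\MMb(\Gamma_1(n));\OO)\cong\Z_S$ for $G=\Gamma/\Gamma_1(n)$, using \cite[Lemma A.2]{MeiTopLevel} as in \cref{lem:eta}; this is where tameness enters. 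You instead re-derive the classical statement from scratch: properness and flatness plus cohomology and base change, the Igusa--Deligne--Rapoport irreducibility of the geometric fibres, and for $\Gamma(n)$ the Weil-pairing map to $\Spec\Z_S[\zeta_n]$. That is a legitimate route (it is essentially how the cited classical result is proved) and has the virtue of making the geometric input explicit; the paper's route is shorter and, for the intermediate case, needs no fibrewise geometry of $\MMb(\Gamma)$ at all.

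One caution on that intermediate case: your justification that $\MMb(\Gamma)_S$ is smooth because it ``\'etale-locally looks like $\MMb(\Gamma_1(n))_S$'' is too quick. The forgetful map $\MMb(\Gamma_1(n))\to\MMb(\Gamma)$ is not \'etale in general; what is true is that $\MMb(\Gamma_1(n))\to[\MMb(\Gamma_1(n))/G]$ is a $G$-torsor, and $\MMb(\Gamma)$ only receives a map from this stack quotient which need not be an equivalence (this is precisely why the paper invokes \cite[Lemma A.2]{MeiTopLevel} rather than plain descent). So geometric reducedness of the fibres of $\MMb(\Gamma)$ over $\Z_S$ needs an argument, for instance that the stabilizers of the $G$-action divide the automorphism groups of the underlying (generalized) elliptic curves with level structure, hence have order invertible by tameness, so the fibres are quotients of smooth curves by groups acting with tame stabilizers and remain smooth; alternatively you can sidestep this entirely by the paper's invariants argument, which gives $H^0(\MMb(\Gamma);\OO)\cong H^0(\MMb(\Gamma_1(n));\OO)^G=\Z_S$ directly. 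Your connectedness argument via finite surjectivity from $\MMb(\Gamma_1(n))$ is fine, as is your treatment of $\Gamma(n)$ over $\Spec\Z_S[\zeta_n]$.
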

\begin{proof}
As recalled above, we have $\pi_0\Tmf(\Gamma) \cong H^0(\MMb(\Gamma); \OO_{\MMb(\Gamma)})$. In the cases that $\Gamma = \Gamma_0(n), \Gamma_1(n)$ or $\Gamma(n)$ the computation of this group is classical and can be found e.g.\ in \cite[Proposition 2.13]{MeiDecMod}. The case of $\Gamma_1(n) \subsetneq \Gamma \subsetneq \Gamma_0(n)$ follows by identifying $H^0(\MMb(\Gamma); \OO_{\MMb(\Gamma)})$ with $H^0(\MMb(\Gamma); \OO_{\MMb(\Gamma)})^{\Gamma/\Gamma_1(n)}$ using \cite[Lemma A.2]{MeiTopLevel} again.
\end{proof}

This allows us to use \cref{prop:killingpi1} to define $\tmf(\Gamma)_S$ in the tame case by killing $\pi_1$ from $\tau_{\geq 0}\Tmf(\Gamma)_S$. Summarizing we obtain:
\begin{thm}\label{thm:tmfGamma}
	For every set of primes $S$ and every congruence subgroup $\Gamma$ that is tame with respect to $\Z_S$, there is up to equivalence a unique connective $E_{\infty}$-ring spectrum $\tmf(\Gamma)_S$ with an $E_{\infty}$-ring map $\tmf(\Gamma)_S \to \Tmf(\Gamma)_S$ that identifies the homotopy groups of the source with the ring of holomorphic modular forms $M(\Gamma; \Z_S)$. 
\end{thm}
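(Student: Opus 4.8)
The plan is to obtain $\tmf(\Gamma)_S$ by applying \cref{prop:killingpi1} to the connective $E_\infty$-ring $R := \tau_{\geq 0}\Tmf(\Gamma)_S$, so the first task is to check that $R$ satisfies the hypotheses of that proposition. Connectivity is built in. By the final lemma above, $\pi_0 R = \pi_0\Tmf(\Gamma)_S$ equals $\Z_S$ if $\Gamma_1(n)\subset\Gamma\subset\Gamma_0(n)$ and $\Z_S[\zeta_n]$ if $\Gamma = \Gamma(n)$; since $n$ is invertible in $\Z_S$ by tameness, the cyclotomic polynomial is separable over $\Z_S$ and hence $\Z_S[\zeta_n]$ is an étale extension of $\Z_S$, so $\pi_0 R$ is étale over $\Z_S$ in either case. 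Finally, $\eta\cdot 1 = 0$ in $\pi_1 R = \pi_1\Tmf(\Gamma)_S$ by \cref{lem:eta}. Hence \cref{prop:killingpi1} produces an $E_\infty$-ring map $R'\to R$ which is an isomorphism on $\pi_i$ for $i\neq 1$ and satisfies $\pi_1 R' = 0$; we define $\tmf(\Gamma)_S := R'$, with structure map the composite $\tmf(\Gamma)_S \to R \to \Tmf(\Gamma)_S$.

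For the existence part it remains to identify the homotopy. Since $\Gamma$ is tame, the descent spectral sequence for $\Tmf(\Gamma)_S$ is concentrated in cohomological degrees $0$ and $1$ (recalled above from \cite[Section 2.1]{MeiTopLevel}), so $\pi_{2t}R \cong H^0(\MMb(\Gamma)_S;\omega^{\otimes t}) = M(\Gamma;\Z_S)_t$ for $t\geq 0$ with $\pi_{2t}R\to\pi_{2t}\Tmf(\Gamma)_S$ an isomorphism, $\pi_1 R \cong H^1(\MMb(\Gamma)_S;\omega)$, and $\pi_i R = 0$ for all other $i$. Because $R'\to R$ is an isomorphism on $\pi_{i\neq1}$ and kills $\pi_1$, the homotopy of $\tmf(\Gamma)_S$ is concentrated in non-negative even degrees and the structure map identifies $\pi_*\tmf(\Gamma)_S$ with $M(\Gamma;\Z_S)$, as required.

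For uniqueness, let $R''\to\Tmf(\Gamma)_S$ be any connective $E_\infty$-ring map whose effect on homotopy identifies the source with $M(\Gamma;\Z_S)$. As $R''$ is connective, this map factors uniquely through the connective cover $R = \tau_{\geq 0}\Tmf(\Gamma)_S$, giving an $E_\infty$-map $R''\to R$. This map has $\pi_1 R'' = 0$ (the homotopy of $R''$ lives in even degrees) and is an isomorphism on $\pi_i$ for $i\neq1$: for odd or negative $i$ both sides vanish, while for $i = 2t\geq0$ the composite $\pi_{2t}R''\to\pi_{2t}R\to\pi_{2t}\Tmf(\Gamma)_S$ is an isomorphism onto $M(\Gamma;\Z_S)_t$ and the second arrow is too, forcing the first to be an isomorphism. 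Thus $R''\to R$ satisfies the hypotheses of the uniqueness clause of \cref{prop:killingpi1}, which yields an equivalence $R''\simeq R' = \tmf(\Gamma)_S$ over $R$, and hence over $\Tmf(\Gamma)_S$.

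Essentially all of the content is already in \cref{prop:killingpi1}, \cref{lem:eta}, and the $\pi_0$-computation, so the proof is mostly assembly. The two points that call for a little care are the identification $\pi_{2*}\tau_{\geq 0}\Tmf(\Gamma)_S \cong M(\Gamma;\Z_S)$, which genuinely uses the cohomological-dimension-$1$ consequence of tameness, and the bookkeeping in the uniqueness step translating the phrase ``identifies the homotopy groups with $M(\Gamma;\Z_S)$'' into the exact hypotheses --- isomorphism on $\pi_{i\neq1}$ and vanishing of $\pi_1$ --- under which \cref{prop:killingpi1} applies.
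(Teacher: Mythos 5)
Your proof is correct and follows essentially the same route as the paper: it constructs $\tmf(\Gamma)_S$ by applying \cref{prop:killingpi1} to $\tau_{\geq 0}\Tmf(\Gamma)_S$, using \cref{lem:eta}, the $\pi_0$-computation, and the concentration of homotopy recalled from \cite[Section 2.1]{MeiTopLevel}. The only difference is that you spell out the uniqueness bookkeeping (factoring through the connective cover and matching the hypotheses of \cref{prop:killingpi1}) which the paper leaves implicit.
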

Formally, we could also apply this procedure in some non-tame cases (e.g.\ if we localize away from $2$), but the author knows of no reason to regard these constructions in these cases as ``correct''. 

\begin{notation}
	We will use the abbreviations
	\begin{eqnarray*}
		\tmf_1(n) &= \tmf(\Gamma_1(n)) \\
		\tmf_0(n) &= \tmf(\Gamma_0(n)) \\
		\tmf(n) &= \tmf(\Gamma(n))
	\end{eqnarray*}
when these make sense. 
\end{notation}

\begin{remark}
	For every ring spectrum $R$, we can consider the stack $\XX_R$ associated to the graded Hopf algebroid $(MU_{2*}(R), (MU\tensor MU)_{2*}(R))$. If $R$ is complex orientable, this coincides with the stack quotient $[\Spec \pi_{2*}R/\G_m]$. In \cite[Definition 5.5]{M-O20} we introduced cubic versions $\MM_1(n)_{\mathrm{cub}}$ and $\MM_0(n)_{\mathrm{cub}}$ of the moduli stacks $\MM(\Gamma_1(n))$ and $\MM(\Gamma_0(n))$. These come with a finite morphism to the moduli stack $\MM_{\mathrm{cub}}$ of cubic curves, where we allow arbitrary Weierstra{\ss} equations. We showed in \cite[Theorem 5.19]{M-O20} that $\MM_1(n)_{\mathrm{cub}} \simeq [M(\Gamma_1(n); \Z[\frac1n])/\G_m]$ for $n\geq 2$. In combination, we see that $\XX_{\tmf_1(n)} \simeq \MM_1(n)_{\mathrm{cub}}$ for $n\geq 2$. In the case $n=1$, the corresponding equivalence $\XX_{\tmf}\simeq \MM_{\mathrm{cub}}$ has a quite different character and was shown in \cite{Mathom}. Whether there are equivalences $\XX_{\tmf_0(n)} \simeq \MM_0(n)_{\mathrm{cub}}$ for a suitable definition of $\tmf_0(n)$ remains open to the knowledge of the author, even for $n=3$.  
\end{remark}


\subsection{The $C_2$-equivariant argument} \label{sec:C2argument}
All the stacks $\MM(\Gamma)$ come with an involution induced from postcomposing the level structure with the $[-1]$-automorphism of the elliptic curve. As explained in \cref{rem:C2actions}, this induces a $C_2$-action on $\Tmf(\Gamma)$. Our goal in this subsection is to define suitable $C_2$-spectra $\tmf(\Gamma)$ in the tame case. This will allow us to construct an $E_{\infty}$-ring spectrum $\tmf(\Gamma)$ also if there is just a tame subgroup $\Gamma'\subset \Gamma$ of index $2$ (see \cref{constr:Gamma'}).  

\begin{remark}\label{rem:C2actions}
The goal of this remark is to clarify the construction of the $C_2$-action on $\Tmf(\Gamma)$ sketched above. 

Denote the automorphism of $\MM(\Gamma)$ described above by $t$. As $t$ commutes with the forgetful map $\pr\colon \MM(\Gamma) \to \MM_{ell}$, this defines a $C_2$-action inside the slice category $(\mathrm{Stacks}/\MM_{ell})^{\acute{e}t, op}$ of stacks \'etale over $\MM_{ell}$. We will use a lax commutative triangle
\[
\begin{tikzcd}[column sep=scriptsize]
(\mathrm{Stacks}/\MM_{ell})^{\acute{e}t, op} \arrow[dr, "\OO^{\top}"'{name=U}] \arrow[rr, "N"]
& & (\mathrm{Stacks}/\MMb_{ell})^{\mathrm{log}-\acute{e}t, op} \arrow[Rightarrow, to=U, shorten = 0.8cm, yshift=0.1cm, xshift =0.1cm ] \arrow[dl, "\OO^{\top}"] \\
& \CAlg(\Sp) 
\end{tikzcd}
\]
Here, the diagonal arrows are the Goerss--Hopkins--Miller and Hill--Lawson sheaves of ring spectra. The horizontal arrow $N$ is a normalization construction (see e.g.\ \cite[Proposition 2.27]{H-L16}). The canonical map $\OO^{top}(N(U)) \to \OO^{top}(U)$ for $U \to \MM_{ell}$ \'etale comes from the fact that $U \subset N(U)$ is an open substack and the Hill--Lawson sheaf restricts to the Goerss--Hopkins--Miller sheaf.  

Applying the left diagonal arrow to $(\MM(\Gamma), t)$ gives a $C_2$-action on $\TMF(\Gamma)$. Doing the same with the composite of the right diagonal arrow and the horizontal arrow produces the $C_2$-action on $\Tmf(\Gamma)$. Moreover, we obtain a $C_2$-map $\Tmf(\Gamma) \to \TMF(\Gamma)$. 

As explained in \cite[Example 6.12]{MeiTopLevel}, the $C_2$-action induced by $t$ on $\TMF(\Gamma)$ is equivalent to the one induced by the $C_2$-action in $(\mathrm{Stacks}/\MM_{ell})^{\acute{e}t, op}$ given by $\id_{\MM(\Gamma)}$ on $\MM(\Gamma)$, but choosing the $[-1]$-isomorphism between the elliptic curves classified by $\pr$ and $\pr\id_{\MM(\Gamma)}$. This $C_2$-action induces multiplication by $-1$ on the pullback of $\omega$ to $\MM(\Gamma)$: Indeed, the $[-1]$-automorphism of an elliptic curve induces multiplication by $-1$ on the sheaf of differentials. Moreover, $\pr$ classifies precisely the pullback of the universal elliptic curve $\EE^{\mathrm{uni}}$ and $\omega$ is the restriction of $\Omega^1_{\EE^{\mathrm{uni}}/\MM_{ell}}$ to $\MM_{ell}$ along the zero section. 

Thus, if $\Gamma$ is tame, this implies that $C_2$ acts by $(-1)^k$ on $\pi_{2k}\TMF(\Gamma) \cong H^0(\MM(\Gamma);\omega^{\tensor k})$. Since $\pi_{2k}\Tmf(\Gamma)$ injects in the tame case into $\pi_{2k}\TMF(\Gamma)$, the same is true for $\pi_{2k}\Tmf(\Gamma)$. 
    
Note that the action $t$ can be trivial, e.g.\ for $\Gamma = \Gamma_0(n)$ or $\Gamma(2)$. This forces $\pi_{2k}\Tmf(\Gamma) = 0$ for $k$ odd in these cases (as $t$ acts both by $1$ and $-1$ and the groups are torsionfree). This corresponds to the classical fact that there are no modular forms of odd weight if $-\id$ is in $\Gamma$. 
\end{remark}

In the following we will use standard notation from equivariant homotopy theory. In particular, we denote for an inner product space $V$ with $G$-action by $S(V)$ the unit sphere and by $S^V$ the $1$-point compactification as $G$-spaces. We denote by $a = a_{\sigma}\colon S^0 \to S^{\sigma}$ the inclusion for $\sigma$ the real sign representation of $C_2$.   

The Hopf map defines a $C_2$-map $\overline{\eta}\colon S(\C^2) \to S^{\C}$, where $C_2$ acts on $\C$ via complex conjugation. This stabilizes to an element in  $\pi_{\sigma}^{C_2}\S$, which restricts to $\eta \in \pi_1^e\S$. 

\begin{lemma}
The homotopy groups  $\pi_{\sigma}^{C_2}(\S)$ and $\pi_{-\sigma}^{C_2}\S$ are infinite cyclic and generated by $\overline{\eta}$ and $a$, respectively. 
\end{lemma}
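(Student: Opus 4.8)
The plan is to compute these $RO(C_2)$-graded stable stems by resolving the $C_2$-sphere into its geometric pieces, namely the isotropy separation sequence. Recall that for any $C_2$-spectrum $X$ there is a cofiber sequence $(EC_2)_+ \otimes X \to X \to \widetilde{EC_2} \otimes X$, and on $C_2$-fixed points this computes $X^{C_2}$ as the pullback of $X^{hC_2} \to X^{tC_2} \leftarrow \Phi^{C_2} X$. Applied to $X = S^{-\sigma}$ or $S^{\sigma}$, the geometric fixed points $\Phi^{C_2}(S^{\pm\sigma})$ is $S^{\mp 1}$ after... wait, $\Phi^{C_2}(S^{\sigma}) = S^0$ since $\sigma^{C_2} = 0$, so $\Phi^{C_2}(S^{k\sigma}) = S^0$ for all $k$. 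Thus $\pi_0^{C_2}\Phi^{C_2}(S^{\pm\sigma}) = \Z$, generated by $a^{\mp 1}$ (or the appropriate power of $a_\sigma$). The homotopy fixed point side is governed by the homotopy fixed point spectral sequence for the trivial $C_2$-action on $\S^e$, i.e.\ $H^s(C_2; \pi_t\S) \Rightarrow \pi_{t-s}^{hC_2}(S^{\pm\sigma})$ after reindexing, where the $RO(C_2)$-grading twist by $\pm\sigma$ enters through the standard identification $\pi_{V}^{hC_2}(\S) \cong \pi_{|V|-|V^{C_2}|}$-th term of a filtered object built from $H^*(C_2;\pi_*\S)$.

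Concretely, I would proceed as follows. First, compute $\pi_0^{hC_2}(S^{-\sigma})$ and $\pi_0^{hC_2}(S^{\sigma})$ using the homotopy fixed point spectral sequence $E_2^{s,t} = H^s(BC_2; \pi_t(S^{\pm\sigma})|_e) = H^s(BC_2; \pi_{t\pm 1}^e\S)$, noting the underlying spectrum of $S^{\pm\sigma}$ is $S^{\pm 1}$. In low degrees $\pi_*^e\S$ is $\Z, \Z/2\{\eta\}, \Z/2\{\eta^2\}, \ldots$ in degrees $0,1,2$, so the relevant columns contributing to total degree $0$ are controlled by $H^*(BC_2;\Z)$ and $H^*(BC_2;\Z/2)$ in a range; a small computation (and a differential coming from the attaching map, i.e.\ from $\overline{\eta}$ itself) shows $\pi_0^{hC_2}(S^{-\sigma}) \cong \Z$ generated by (the image of) $a$ and $\pi_0^{hC_2}(S^{\sigma}) \cong \Z$ generated by $\overline{\eta}$; one must check that the $d_2$ or $d_3$ differential hitting the potential $\Z/2$ classes is nonzero, which is exactly the statement that $\eta$ detects the attaching structure. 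Second, compare with the Tate construction $S^{tC_2}$: since $\widetilde{EC_2}$ is smashing onto the Tate/geometric part, and $\Phi^{C_2}(S^{\pm\sigma}) = S^0$, the gluing pullback square shows $\pi_{\pm\sigma}^{C_2}\S \cong \ker$ or an extension of $\pi_0^{hC_2}(S^{\pm\sigma})$ by the image of $\Z = \pi_0\Phi^{C_2}$; tracking that $a_\sigma$ maps to a generator of geometric fixed points while $\overline{\eta}$ restricts to $\eta \neq 0$ underlying but to $0$ in geometric fixed points pins down both groups as infinite cyclic on the claimed generators.

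Alternatively — and this may be cleaner to write — I would invoke the known structure of $\pi_{\star}^{C_2}\S$ in the relevant "pure" lines: by Araki–Iriye or the computations recorded in e.g.\ work of Dugger or Guillou–Isaksen, the groups $\pi_{k\sigma}^{C_2}\S$ for $k \in \Z$ are well understood, with $\pi_{\sigma}^{C_2}\S \cong \Z\{\overline{\eta}\}$ and $\pi_{-\sigma}^{C_2}\S \cong \Z\{a\}$ (here $a = a_\sigma$), the generator $a$ in degree $-\sigma$ being the Euler class and $\overline{\eta}$ in degree $\sigma$ being the $C_2$-Hopf map; these fit into $a \cdot \overline{\eta} = \pm(\text{torsion})$ relations but are themselves free. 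So I would either cite this or reprove it via the isotropy separation argument above. The main obstacle is the second step: correctly identifying the relevant differential in the homotopy fixed point spectral sequence (equivalently, handling the extension in the isotropy separation pullback) so that no spurious $2$-torsion survives and the groups come out exactly infinite cyclic rather than $\Z \oplus \Z/2$. This amounts to knowing that the underlying $\eta$, together with the $C_2$-structure, forces the would-be torsion class to be a boundary — a fact one can pin down by restricting to the underlying spectrum (where $\pi_{\mp 1}^e\S = 0$ in the relevant degree, so there is nothing torsion to support) and to geometric fixed points (where everything is concentrated in the single class $a^{\mp 1}$), the two restrictions together being jointly injective enough on these lines to rule out extra summands.
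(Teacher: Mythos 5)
Your fallback option --- simply citing the known computation of these stems (Araki--Iriye, or the charts of Dugger and Guillou--Isaksen) --- is in fact exactly what the paper does: it quotes formula (8.1) of Araki--Iriye for $\pi_{\sigma}^{C_2}\S$ and their Proposition 7.0 (geometric fixed points give an isomorphism $\pi_{-\sigma}^{C_2}\S \to \pi_0\S$) together with the observation that $\Phi^{C_2}(a)=\id_{S^0}$. So if you cite, you are on the paper's route and there is nothing more to say.

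The self-contained argument you actually sketch, however, has a genuine gap. The intermediate claim that $\pi_0^{hC_2}(S^{\pm\sigma})\cong \Z$, with ``a $d_2$ or $d_3$ differential'' disposing of the torsion, is false. In total degree $0$ the homotopy fixed point spectral sequence has a $\Z/2$ in every positive filtration (coming from $H^s(C_2;\pi_{s\mp 1}\S)$ with the sign twist), and these classes are not wiped out by low differentials: by Lin's theorem / the Segal conjecture the Borel and Tate constructions on the sphere are $2$-adically completed (e.g.\ $\S^{tC_2}\simeq \S^{\wedge}_2$, $\pi_0\S^{hC_2}\cong \Z\oplus\Z_2$), so the homotopy fixed point groups in these degrees contain $2$-adic, not finitely generated, contributions. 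The integrality of the genuine groups emerges only from the Tate square pullback against the \emph{uncompleted} $\pi_0\Phi^{C_2}(S^{\pm\sigma})\cong\Z$, and pinning down that pullback (and the possible extension by a $\Z/2$ from the underlying $\eta$) is precisely the content you wave away; your closing appeal to the restriction-to-underlying and geometric-fixed-points maps being ``jointly injective enough'' is essentially the statement to be proved, so as written it is circular. If you want an elementary argument, note that for $\pi_{-\sigma}$ the cofiber sequence $(C_2)_+\to S^0\to S^{\sigma}$ gives an exact sequence $\pi_0^e\S\xrightarrow{\mathrm{tr}}\pi_0^{C_2}\S\to\pi_{-\sigma}^{C_2}\S\to \pi_{-1}^e\S=0$, identifying $\pi_{-\sigma}^{C_2}\S$ with $A(C_2)/([C_2])\cong\Z\{a\}$; but for $\pi_{\sigma}$ the same sequence only yields an extension of $\Z\{2-[C_2]\}$ by a quotient of $\Z/2$, and ruling out the extra $\Z/2$ is exactly the nontrivial input for which the paper invokes Araki--Iriye.
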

\begin{proof}
For $\pi_{\sigma}^{C_2}(\S)$, this is proven as formula (8.1) in \cite{ArakiIriye}. (Note that they use the notation $\pi^s_{p,q}$ for our $\pi^{C_2}_{p\sigma +q}(\S)$.) Proposition 7.0 in op.\ cit.\ implies that the homomorphism $\pi_{-\sigma}^{C_2}\S \to \pi_0\S$, taking a map $\S \to \Sigma^{\sigma}\S$ to its geometric fixed points, is an isomorphism. Taking fixed points of the map $a$ clearly gives the identity map $S^0\to S^0$, which yields the result.
\end{proof}

In the following, we denote by $\tau_{\leq i}$ the slice coconnective cover, by $\tau_{\geq i}$ the slice connective cover and by $\tau_i = \tau_{\geq i}\tau_{\leq i}$ the $i$-th slice for $C_2$-spectra. We refer to \cite{HHR} for background about the slice filtration. We denote by $H\underline{\Z}$ the $C_2$-Eilenberg--MacLane spectrum for the constant Mackey functor $\underline{\Z}$. 

\begin{lemma}\label{lem:sliceHZ}
We have an equivalence $\tau_{\leq 1}C\overline{\eta} \simeq H\underline{\Z}$.
\end{lemma}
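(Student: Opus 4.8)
The plan is to mimic the non-equivariant argument of \cref{lem:connectivity}, but working $C_2$-equivariantly and using the slice filtration in place of the Postnikov filtration. First I would recall that $C\overline\eta$ is by construction the $E_\infty$-cone on the $C_2$-equivariant Hopf element $\overline\eta \in \pi_\sigma^{C_2}\S$, so it is obtained from the trivial map $\Sigma^{\sigma+1}\S \to 0$ by applying $\P_{\S}$ and pushing out, exactly as in the definition of $C^A(x)$ with $A = \S$ and $x = \overline\eta\colon \S^{\sigma} \to \S$. The base case $x = 0$ of \cref{lem:splitinjective} applies verbatim to show that the canonical map from the ordinary cone $C(\overline\eta)$ to $C^{\S}(\overline\eta)=C\overline\eta$ is split injective as a map of $\S$-modules (indeed of $C_2$-spectra).

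The key step is then to show that $C(\overline\eta) \to C\overline\eta$ is ``$2$-slice-connected'', i.e.\ that its cofiber is slice $\geq 2$, so that applying $\tau_{\leq 1}$ identifies $\tau_{\leq 1}C\overline\eta$ with $\tau_{\leq 1}C(\overline\eta)$. To see this I would tensor with $H\underline\Z$ and use that $\overline\eta \otimes H\underline\Z$ is the zero map $\Sigma^{\sigma}H\underline\Z \to H\underline\Z$ (the restriction to the underlying spectrum is $\eta\otimes H\Z = 0$, and since $H\underline\Z$ has no slice-negative or slice-$1$ Mackey-functor homotopy beyond the obvious, a null map underlying a null map must itself be null — or one simply notes $[\Sigma^{\sigma}H\underline\Z, H\underline\Z]^{C_2} = \underline{\pi}_{-\sigma}^{C_2}H\underline\Z$ vanishes by a direct Mackey-functor computation). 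Then $C^{H\underline\Z}(\overline\eta\otimes H\underline\Z) \simeq \P_{H\underline\Z}(\Sigma^{\sigma+1}H\underline\Z) \simeq H\underline\Z \oplus \Sigma^{\sigma+1}H\underline\Z \oplus (\Sigma^{2\sigma+2}H\underline\Z)_{hC_2} \oplus \cdots$, and the split inclusion from $C(\overline\eta\otimes H\underline\Z) = H\underline\Z \oplus \Sigma^{\sigma+2}H\underline\Z$ hits exactly the first two summands; the complementary summands $(\Sigma^{m(\sigma+1)}H\underline\Z)_{hC_2}$ for $m\geq 2$ are slice $\geq 2$ since $\Sigma^{m(\sigma+1)}H\underline\Z$ is slice $\geq 2m$ and homotopy orbits preserve slice-connectivity (by \cite[Corollary 4.25]{HHR}, or directly, induced cells of dimension $\geq 2m$ lie in slices $\geq 2m$). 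Hence the cofiber of $C(\overline\eta)\to C\overline\eta$ becomes slice $\geq 4 > 1$ after smashing with $H\underline\Z$; by the equivariant Hurewicz/slice comparison (using that both source and target are slice-connective) it is slice $\geq 2$ integrally, which is what we want.

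Finally I would compute $\tau_{\leq 1}C(\overline\eta)$. The ordinary cofiber sequence $\S^{\sigma} \xrightarrow{\overline\eta} \S \to C(\overline\eta)$ gives, after truncation, that $\tau_{\leq 1}C(\overline\eta)$ sits in a cofiber sequence receiving a map from $\tau_{\leq 1}\S$; since $\overline\eta$ lives in degree $\sigma$ and its effect on slices $\leq 1$ is controlled by the classical fact that $\eta$ generates $\pi_1\S$ and $\overline\eta$ generates $\pi_\sigma^{C_2}\S$ (the preceding lemma), one identifies $\tau_{\leq 1}C(\overline\eta) \simeq H\underline\Z$: the underlying spectrum is $\tau_{\leq 1}C\eta \simeq H\Z$ by \cref{lem:connectivity}, the geometric fixed points are controlled by $a\colon \S^0 \to \S^\sigma$ again via the $\pi_{-\sigma}$ computation of the previous lemma, and the two together with the Mackey functor structure force the constant Mackey functor $\underline\Z$. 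Combining, $\tau_{\leq 1}C\overline\eta \simeq \tau_{\leq 1}C(\overline\eta) \simeq H\underline\Z$.

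I expect the main obstacle to be the bookkeeping in the second step: verifying that $\overline\eta\otimes H\underline\Z$ is genuinely null as a $C_2$-map (not just on underlying spectra and geometric fixed points separately) and that the homotopy-orbit summands of $\P_{H\underline\Z}(\Sigma^{\sigma+1}H\underline\Z)$ really do land in slices $\geq 2$ — this uses the precise slice-connectivity behaviour of representation spheres $\Sigma^{m(\sigma+1)}H\underline\Z$ and of the norm/homotopy-orbit functors from \cite{HHR}, and one has to be careful that $\Sigma^{\sigma+1}H\underline\Z$ is slice $\geq 2$ (its slice cells are $\hat\rho$-suspensions, $\sigma+1$ being the underlying-dimension-$2$ ``motivic'' cell) rather than merely connective.
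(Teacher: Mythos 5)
Your proposal answers a different statement than the one the paper proves here: in \cref{lem:sliceHZ}, $C\overline{\eta}$ denotes the \emph{ordinary} cofiber of $\overline{\eta}\colon S^{\sigma}\to S^{0}$ (see the cofiber sequence used in the paper's proof), not the $E_{\infty}$-cone, which is always written $C^{\S}(\overline{\eta})$ and is only treated afterwards. The paper's proof is a short direct computation: since $C\overline{\eta}$ is connective, it suffices to show its $0$-slice is $H\underline{\Z}$ and its $1$-slice vanishes, which by \cite{HHR} and \cite[Section 2.4]{H-M17} reduces to $\underline{\pi}_0C\overline{\eta}\cong\underline{\Z}$ and $\underline{\pi}_{\sigma}C\overline{\eta}=0$; these follow from the long exact sequence of the cofiber sequence together with the computations of $\pi_{\pm\sigma}^{C_2}\S$ and $\pi_0^{C_2}\S$. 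What you actually prove (modulo the issues below) is the slice-$2$-connectivity of $C(\overline{\eta})\to C^{\S}(\overline{\eta})$, i.e.\ you reconstruct the paper's \emph{subsequent} lemma; but for the part that is the present lemma — identifying $\tau_{\leq 1}$ of the ordinary cone with $H\underline{\Z}$ — your final step is not a proof. Knowing the underlying spectrum and gesturing at geometric fixed points does not identify a genuine $C_2$-spectrum unless you have a comparison map on which to check these invariants (and geometric fixed points do not commute with slice truncation, so you have not computed them for $\tau_{\leq 1}C(\overline{\eta})$ anyway). In particular, a potential nonzero $1$-slice $\Sigma^{\sigma}HM$ with $M$ concentrated at the $C_2/C_2$-level is invisible to the underlying spectrum; ruling it out is exactly the computation $\underline{\pi}_{\sigma}C\overline{\eta}=0$, which you never perform. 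This is the missing core of the argument.

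There are also local errors worth flagging. \cref{lem:splitinjective} requires $x=0$, so it does not give a splitting of $C(\overline{\eta})\to C^{\S}(\overline{\eta})$ over $\S$, where $\overline{\eta}\neq 0$; the splitting is only available after smashing with $H\underline{\Z}$ (where it is indeed what you and the paper use). The group controlling the nullity of $\overline{\eta}\otimes H\underline{\Z}$ as an $H\underline{\Z}$-module map is $\pi_{\sigma}^{C_2}H\underline{\Z}=0$, not $\pi_{-\sigma}^{C_2}H\underline{\Z}$, which is $\Z/2\neq 0$ (cf.\ \cref{lem:EM}); your stated vanishing is false even though the conclusion is correct. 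Finally, the cone on the zero map $\Sigma^{\sigma}H\underline{\Z}\to H\underline{\Z}$ is $H\underline{\Z}\oplus\Sigma^{\sigma+1}H\underline{\Z}$, not $H\underline{\Z}\oplus\Sigma^{\sigma+2}H\underline{\Z}$. With the ordinary-cone reading of the statement, the efficient route is the paper's: compute $\underline{\pi}_0$ and $\underline{\pi}_{\sigma}$ of $C\overline{\eta}$ from the long exact sequence and invoke the identification of the $0$- and $1$-slices of connective $C_2$-spectra.
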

\begin{proof}
It suffices to show that the first slice of $C\overline{\eta}$ is null and the zeroth slice is $H\underline{\Z}$. As shown in \cite{HHR} and summarized in \cite[Section 2.4]{H-M17}, this is implied by the calculations $\underline{\pi}_0C\overline{\eta} \cong \underline{\Z}$ and $\underline{\pi}_{\sigma}C\overline{\eta} = 0$. These follows easily by the long exact sequence arising from the cofiber sequence 
\[S^{\sigma} \xrightarrow{\overline{\eta}} S^0 \to C\overline{\eta}\] 
and the computations of $\pi_{-\sigma}^{C_2}\S$, $\pi_0^{C_2}\S$ and $\pi_{\sigma}^{C_2}(\S)$ above, using also that $\pi_{-1}^{C_2}S^{\sigma} = 0$. 
\end{proof}

The following lemma is a $C_2$-slice analogue of Lewis's equivariant Hurewicz theorem \cite[Theorem 2.1]{LewisHurewicz}. Recall that a $C_2$-spectrum is \emph{$k$-slice connected} iff $\tau_{\leq k}X = 0$. 
\begin{lemma}\label{lem:sliceHurewicz}
A connective $C_2$-spectrum $X$ is $k$-slice connected iff $H\underline{\Z} \tensor X$ is $k$-slice connected.
\end{lemma}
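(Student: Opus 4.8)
The plan is to mimic the proof of the ordinary equivariant Hurewicz theorem, replacing ``connected'' by ``slice connected'' throughout, and to leverage the slice analogue of the Hurewicz theorem for $H\underline{\Z}$ that is implicitly built into the machinery of \cite{HHR}. One direction is formal: if $X$ is $k$-slice connected, i.e.\ $\tau_{\leq k}X = 0$, then since smashing with $H\underline{\Z}$ preserves slice-coconnective covers up to the relevant range (equivalently, since $\tau_{\leq k}$ is smashing-compatible in the needed sense, cf.\ the behaviour of the slice filtration under tensoring used in \cref{lem:sliceHZ}), we get $\tau_{\leq k}(H\underline{\Z} \tensor X) = 0$ as well. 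Slightly more carefully, I would argue by induction up the slice tower: $\tau_{\leq k}X = 0$ means all slices $\tau_j X$ for $j \leq k$ vanish, each such slice is built from suspensions of $H\underline{M}$'s in the appropriate degrees, and the claim reduces to the observation that $H\underline{\Z} \tensor \Sigma^V H\underline{M}$ is $k$-slice connected whenever $\Sigma^V H\underline{M}$ is — which follows because $H\underline{\Z} \tensor H\underline{M}$ has slices concentrated in degrees $\geq 0$ (indeed it is connective in the naive sense, and one invokes the other direction of \cref{lem:sliceHurewicz} in the base case, or better, one proves the two directions simultaneously by induction).

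For the substantive direction, suppose $H\underline{\Z}\tensor X$ is $k$-slice connected; I want to conclude $X$ is. Since $X$ is assumed connective, the slice tower of $X$ is bounded below, so if $X$ were not $k$-slice connected there would be a smallest $m \leq k$ with $\tau_m X \neq 0$. Then $\tau_{\leq m}X \simeq \tau_m X$ is a single nonzero slice, which by the classification of slices in \cite{HHR} is of the form $\Sigma^{?}H\underline{M}$ for a nonzero Mackey functor $\underline{M}$ (with the shift dictated by the parity of $m$: $\Sigma^{m}$ for $m$ even resp.\ $\Sigma^{\frac{m-1}{2}\rho + 1}$-type twists for $m$ odd, in the conventions of op.\ cit.). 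Smashing the truncation map $X \to \tau_{\leq m}X$ with $H\underline{\Z}$ and using that $H\underline{\Z}\tensor X$ is $m$-slice connected, I would deduce that $H\underline{\Z}\tensor \tau_m X$ has vanishing $m$-th slice. But the $m$-th slice of $H\underline{\Z}\tensor \Sigma^{?}H\underline{M}$ can be computed directly: it recovers $\Sigma^{?}H\underline{\Z\tensor M}$ (this is the genuinely computational input, a slice-level unitality/base-change statement analogous to the fact that $H\Z \wedge HM \to HM$ is an equivalence in the bottom homotopy group), and since $\underline{\Z}\tensor\underline{M} \neq 0$ whenever $\underline{M}\neq 0$ — for a nonzero Mackey functor, tensoring with $\underline{\Z}$ over the Burnside Mackey functor cannot kill everything, as one sees on the underlying level and on fixed points — this contradicts the vanishing. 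Hence no such $m$ exists and $X$ is $k$-slice connected.

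I expect the main obstacle to be making precise the ``slice of $H\underline{\Z}\tensor\Sigma^{?}H\underline{M}$ is $\Sigma^{?}H\underline{\Z\tensor M}$'' step, i.e.\ identifying how $-\tensor H\underline{\Z}$ interacts with the bottom slice. The clean way to do this is to reduce everything to the zeroth slice by desuspending (both the regular-representation suspensions and the integer suspensions shift slices predictably by \cite{HHR}), so that one only needs: for a connective $C_2$-spectrum $Y$ with $\tau_{<0}Y = 0$, the zeroth slice $\tau_0 Y$ is determined by $\underline{\pi}_0 Y$ via $\tau_0 Y \simeq H\underline{\pi}_0 Y$, and $\underline{\pi}_0(H\underline{\Z}\tensor Y) \cong \underline{\pi}_0 Y$ because $H\underline{\Z}\tensor -$ induces on $\underline{\pi}_0$ the identity (the unit $\S \to H\underline{\Z}$ being a $\underline{\pi}_0$-iso). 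Combined with the observation that $\tau_0$ of a bounded-below spectrum is nonzero as soon as $\underline{\pi}_0$ is nonzero, this closes the argument; I would cite \cite[Section 2.4]{H-M17} and \cite{HHR} for the relevant slice computations exactly as in the proof of \cref{lem:sliceHZ}.
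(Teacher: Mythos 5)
The decisive problem is exactly at the step you flagged, and it cannot be repaired as you propose. First, the unit $\S \to H\underline{\Z}$ is \emph{not} a $\underline{\pi}_0$-isomorphism: $\underline{\pi}_0\S$ is the Burnside Mackey functor $\underline{A}$, not $\underline{\Z}$, so for connective $Y$ one only gets $\underline{\pi}_0(H\underline{\Z}\tensor Y) \cong \underline{\Z}\Box\,\underline{\pi}_0Y$ (box product of Mackey functors), which is in general not $\underline{\pi}_0Y$. Second, your fallback claim that $\underline{\Z}\Box\underline{M}\neq 0$ for every nonzero Mackey functor $\underline{M}$ is false. Take $\underline{M}$ with $\underline{M}(C_2/C_2)=\Z/3$ and $\underline{M}(C_2/e)=0$: Frobenius reciprocity gives $2(1\otimes m)=\mathrm{tr}(1)\otimes m = 1\otimes \res(m)=0$ in $(\underline{\Z}\Box\underline{M})(C_2/C_2)$, and since $m$ is $3$-torsion this forces $\underline{\Z}\Box\underline{M}=0$. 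In fact $H\underline{\Z}\tensor H\underline{M}\simeq 0$ for this $\underline{M}$: the underlying spectrum of $H\underline{M}$ is trivial, and $\Phi^{C_2}H\underline{\Z}$ has homotopy killed by $2$ while $\Phi^{C_2}H\underline{M}$ is killed by $3$, so both the underlying spectrum and the geometric fixed points of the smash vanish. So at the crucial moment your contradiction evaporates: the bottom slice of $H\underline{\Z}\tensor \tau_mX$ can vanish even though $\tau_mX\neq 0$, and no sharpened argument for ``$\underline{\Z}\Box\underline{M}\neq 0$'' can exist, because the assertion is false. (Note the same example, with $X=H\underline{M}$ itself, shows that smashing with $H\underline{\Z}$ is not conservative on connective $C_2$-spectra, so any argument for the substantive direction must use some input about which Mackey functors can actually occur in the relevant slices --- for instance injectivity of restriction maps, under which $\underline{\Z}\Box\underline{M}=0$ does imply $\underline{M}=0$.)

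This is also where your route differs from the paper's. The paper never argues at the level of Mackey-functor tensor products: it uses the slice identification $\tau_0\S\simeq H\underline{\Z}$ and compares $\underline{\pi}_0(\S\tensor HM)$ with $\underline{\pi}_0(H\underline{\Z}\tensor HM)=\underline{H}_0(HM;\underline{\Z})$ via slice-connectivity estimates for $\tau_{\geq 1}\S\tensor HM$, i.e.\ the needed nonvanishing is extracted from the slice filtration of the sphere rather than from conservativity of $\underline{\Z}\Box-$; your example shows this is a genuinely delicate point (with Burnside coefficients $\underline{A}$ in place of $\underline{\Z}$ your argument would have been fine, since $\underline{A}\Box\underline{M}\cong\underline{M}$). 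Two smaller issues: for odd $m$ your reduction ``by desuspending'' is not as routine as stated, since integer desuspension does not shift the slice filtration and the Mackey functors occurring in odd slices are constrained; and for the easy direction you should simply quote that tensoring a slice $\geq k+1$ spectrum with a slice $\geq 0$ spectrum yields a slice $\geq k+1$ spectrum \cite[Proposition 4.26]{HHR}, rather than the partially circular induction through slices you sketch. As written, the substantive direction of your proposal has a genuine gap.
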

Spelled out, the latter condition is equivalent to $\underline{H}_V(X;\underline{\Z}) = \underline{\pi}_V(H\underline{\Z}\tensor X)  = 0$ for all $C_2$-representations $V$ of the form $i\rho$ or $i\rho -1$ with $|V|\leq k$. 
\begin{proof}
If $X$ is $k$-slice connected, the same is true for $H\underline{\Z}\tensor X$. For the converse, assume that $\underline{H}_V^{C_2}(X;\underline{\Z}) = 0$ for all $C_2$-representations $V$ of the form $i\rho$ or $i\rho -1$ with $|V|\leq k$. By induction on $k$, we can assume that $X$ is $(k-1)$-slice connected and we need to show that $\tau_kX = 0$ to deduce that $X$ is indeed $k$-slice connected. Let $W$ be $\frac{k}2\rho$ if $k$ is even and $\frac{k+1}2\rho -1$ if $k$ is odd. As $\tau_{\geq k+1}X$ and its suspension are $k$-slice connected, the direction discused above shows $\underline{H}_W(\tau_{\geq k+1}X; \underline{\Z}) = \underline{H}_W(\Sigma\tau_{\geq k+1}X; \underline{\Z}) = 0$. Thus,
\[0 = \underline{H}_W(X;\underline{\Z}) \to \underline{H}_W(\tau_kX; \underline{\Z})\]
is an isomorphism. As summarized in \cite[Section 2.4]{H-M17}, the slice $\tau_kX$ is of the form $S^W \tensor HM$ for some Mackey functor $M$ and we deduce that $\underline{H}_0(HM; \underline{\Z}) \cong \underline{H}_W(\tau_kX; \underline{\Z}) = 0$. 

We know that $\tau_0\S = H\underline{\Z}$. As $HM$ is (slice) connective, a similar argument to before shows that 
\[M \cong \underline{\pi}_0(\S\tensor HM) \cong \underline{\pi}_0(H\underline{\Z} \tensor HM)  = \underline{H}_0(HM; \underline{\Z}) = 0.\]
Thus, $\tau_kX = 0$ as was to be shown. 
\end{proof}

For an element $x \in \pi_k^{C_2}\S$, we can define a (naive) $C_2$-equivariant $E_\infty$-cone $C^{\S}(x)$ as in the non-equivariant situation in the preceding subsection. The arguments for the following two results are quite analogous to those of the preceding section, so we allow ourselves to be brief. 

\begin{lemma}
The map $C(\overline{\eta}) \to C^{\S}(\overline{\eta})$ is slice-$2$-connected.
\end{lemma}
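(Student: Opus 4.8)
The plan is to mimic the proof of \cref{lem:connectivity}, but in the $C_2$-slice setting, using \cref{lem:sliceHurewicz} as the slice-equivariant Hurewicz theorem in place of the ordinary Hurewicz theorem. First I would reduce, via \cref{lem:sliceHurewicz}, the claim that the cofiber of $C(\overline{\eta}) \to C^{\S}(\overline{\eta})$ is slice-$2$-connected to the statement that $H\underline{\Z} \tensor \left(C(\overline{\eta}) \to C^{\S}(\overline{\eta})\right)$ has slice-$2$-connected cofiber, i.e.\ that the map $C(\overline{\eta} \tensor H\underline{\Z}) \to C^{H\underline{\Z}}(\overline{\eta}\tensor H\underline{\Z})$ is slice-$2$-connected (using, as in the non-equivariant case, that tensoring the $E_\infty$-cone with the $E_\infty$-algebra $H\underline{\Z}$ produces the $E_\infty$-cone over $H\underline{\Z}$, and that the ordinary cone commutes with the base change $\S \to H\underline{\Z}$).

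Next I would observe that $\overline{\eta} \tensor H\underline{\Z}$ is the zero map $S^{\sigma} \tensor H\underline{\Z} \to H\underline{\Z}$: this follows because $\pi_{\sigma}^{C_2}(H\underline{\Z}) \cong [S^{\sigma}, H\underline{\Z}]^{C_2} = \underline{H}^{-\sigma}_{C_2}(\pt;\underline{\Z})$ vanishes (the relevant $RO(C_2)$-graded cohomology of a point is standard; $\underline{H}^{V}$ with $V = -\sigma$ is zero), so there are no nonzero $H\underline{\Z}$-linear maps $\Sigma^{\sigma}H\underline{\Z} \to H\underline{\Z}$. Then, exactly as in \cref{lem:connectivity}, the relevant comparison becomes the map
\[
H\underline{\Z} \oplus \Sigma^{\sigma+1}H\underline{\Z} \;\longrightarrow\; C^{H\underline{\Z}}(0) \;\simeq\; \P_{H\underline{\Z}}\!\left(\Sigma^{\sigma}H\underline{\Z}\right) \;\simeq\; H\underline{\Z} \oplus \Sigma^{\sigma}H\underline{\Z} \oplus \left(\Sigma^{2\sigma}H\underline{\Z}\right)_{hC_2} \oplus \cdots,
\]
wait — I should be careful about whether it is $C(0)$ or $C^{H\underline{\Z}}(0)$ that carries the extra suspension; as in the non-equivariant argument, $C(0) \simeq H\underline{\Z} \oplus \Sigma^{1+\sigma}H\underline{\Z}$, and the comparison map $C(0) \to C^{H\underline{\Z}}(0)$ is split injective by (the evident $C_2$-analogue of) \cref{lem:splitinjective}, whose proof via square-zero extensions versus $\P_{(-)}$ goes through verbatim equivariantly. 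Being split injective, its cofiber is the complementary summand of $C^{H\underline{\Z}}(0)$, whose first nonvanishing piece is $\left(\Sigma^{2\sigma}H\underline{\Z}\right)_{hC_2}$; this is slice-$2$-connected since $\Sigma^{2\sigma}H\underline{\Z}$ is, and slice connectivity is preserved by homotopy orbits. Therefore the cofiber of $C(\overline{\eta}\tensor H\underline{\Z}) \to C^{H\underline{\Z}}(\overline{\eta}\tensor H\underline{\Z})$ is slice-$2$-connected, and by \cref{lem:sliceHurewicz} so is the cofiber of $C(\overline{\eta}) \to C^{\S}(\overline{\eta})$, which is the assertion.

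The main obstacle I anticipate is purely bookkeeping around $RO(C_2)$-gradings: making sure that the suspensions appearing in $\P_{H\underline{\Z}}(\Sigma^{\sigma}H\underline{\Z})$ are $\Sigma^{n\sigma}$ (from the $n$-fold smash power before taking $\Sigma_n$-homotopy orbits, the $C_2$ acting on $\sigma^{\tensor n}$), that $\Sigma^{2\sigma}H\underline{\Z}$ and its homotopy orbits are genuinely slice-$\geq 3$ hence slice-$2$-connected (for which one invokes that $S^{2\sigma}$ is slice-$\geq 0$ but the relevant regular-representation computation shows $\tau_{\leq 2}\Sigma^{2\sigma}H\underline{\Z} = 0$ — here one uses $\underline{H}_V(S^{2\sigma}\tensor H\underline{\Z};\underline{\Z}) = \underline{H}_{V-2\sigma}(H\underline{\Z};\underline{\Z})$ vanishes for $V = i\rho, i\rho-1$ with $|V|\leq 2$, a small $RO(C_2)$-graded homology-of-a-point check), and that split injectivity of $C(0)\to C^{H\underline{\Z}}(0)$ really does hold equivariantly — but this is exactly the content of the $C_2$-version of \cref{lem:splitinjective}, whose proof uses only the square-zero-extension functor $\Mod_{H\underline{\Z}} \to \CAlg_{H\underline{\Z}}$ and the universal property of pushouts, both of which are available $C_2$-equivariantly.
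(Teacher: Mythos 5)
Your strategy is exactly the paper's (reduce via \cref{lem:sliceHurewicz}, note that $\overline{\eta}$ dies in $H\underline{\Z}$, identify the resulting map with a split inclusion into a free $E_\infty$-algebra as in \cref{lem:splitinjective}), but there is a concrete grading error at the key step, and as written that step fails. Since $\overline{\eta}\tensor H\underline{\Z}$ is the zero map $\Sigma^{\sigma}H\underline{\Z}\to H\underline{\Z}$, the $E_\infty$-cone is
\[
C^{H\underline{\Z}}(0)\;\simeq\;H\underline{\Z}\tensor_{\P_{H\underline{\Z}}(\Sigma^{\sigma}H\underline{\Z})}H\underline{\Z}\;\simeq\;\P_{H\underline{\Z}}\bigl(\Sigma^{1+\sigma}H\underline{\Z}\bigr)\;=\;\P_{H\underline{\Z}}\bigl(\Sigma^{\rho}H\underline{\Z}\bigr),
\]
because $\P_{H\underline{\Z}}$ applied to the pushout square analogous to \eqref{eq:trivial} shifts the generator by one: this is exactly what happens non-equivariantly in \cref{lem:connectivity}, where the target is $\P^{H\Z}(\Sigma^{2}H\Z)$, not $\P^{H\Z}(\Sigma H\Z)$. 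So the extra suspension sits on \emph{both} sides: $C(0)\simeq H\underline{\Z}\oplus\Sigma^{\rho}H\underline{\Z}$ and the target is $H\underline{\Z}\oplus\Sigma^{\rho}H\underline{\Z}\oplus(\Sigma^{2\rho}H\underline{\Z})_{hC_2}\oplus\cdots$, with summands graded by $n\rho$, not $n\sigma$. In your displayed comparison the source carries $\Sigma^{1+\sigma}H\underline{\Z}$ but the target's corresponding summand is $\Sigma^{\sigma}H\underline{\Z}$, so the claimed ``inclusion of the first two summands'' is not a map you can write down, and you are then driven to the unnecessary (and delicate) side question of whether $(\Sigma^{2\sigma}H\underline{\Z})_{hC_2}$ is slice-$2$-connected — which in any case is not what the cofiber is.

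With the corrected identification the argument closes immediately and coincides with the paper's proof: the comparison map is the split inclusion $H\underline{\Z}\oplus\Sigma^{\rho}H\underline{\Z}\to\P_{H\underline{\Z}}(\Sigma^{\rho}H\underline{\Z})$ (split by the evident $C_2$-analogue of \cref{lem:splitinjective}, whose square-zero-extension proof does go through equivariantly, as you say), and the complementary summands $(\Sigma^{n\rho}H\underline{\Z})_{h\Sigma_n}$ for $n\geq 2$ are slice $\geq 2n\geq 4$, since $S^{n\rho}$ is a slice cell of dimension $2n$ and slice connectivity is preserved under homotopy colimits; no $RO(C_2)$-graded homology-of-a-point computation is needed. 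The remaining ingredients of your write-up (base change of $E_\infty$-cones along $\S\to H\underline{\Z}$, vanishing of $\pi_{\sigma}^{C_2}H\underline{\Z}$, the reduction via \cref{lem:sliceHurewicz}) are correct and match the paper.
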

\begin{proof}
By \cref{lem:sliceHurewicz} it suffices to check that 
$C(\overline{\eta}) \tensor H\underline{\Z} \to C^{\S}(\overline{\eta}) \tensor H\underline{\Z}$ is slice-$2$-connected. Since $\underline{\pi}_{\sigma}H\underline{\Z} = 0$ and thus $\overline{\eta}$ becomes zero in $H\underline{\Z}$, this agrees with 
\[H\underline{\Z} \oplus \Sigma^{\rho}H\underline{\Z} \to C^{H\underline{\Z}}(\Sigma H\underline{\Z}) \simeq \P^{H\underline{\Z}}{\Sigma^{\rho}H\underline{\Z}} \simeq H\underline{\Z} \oplus \Sigma^{\rho}H\underline{\Z} \oplus (\Sigma^{2\rho}H\underline{\Z})_{hC_2}\oplus \cdots \]
Analogously to \cref{lem:splitinjective}, the map is split injective and thus indeed slice-$2$-connected (even slice-$3$-connected).
\end{proof}

Together with \cref{lem:sliceHZ} this implies that $\tau_{\leq 1} C^{\S}(\overline{\eta}) \simeq H\underline{\Z}$. To deduce the analogue of \cref{prop:killingpi1}, we will need one more categorical result. 

\begin{lemma}
Let $G$ be a finite group and $\Sp_G$ be the $\infty$-category of $G$-spectra. Denote by $\Sp_G^{\geq 0}$ the full subcategory of connective $G$-spectra and by $\Sp^{[0,k]}_G$ that of connective and slice-$k$-truncated $G$-spectra. Then the inclusion $\CAlg(\Sp^{[0,k]}_G) \to \CAlg(\Sp_G)$ admits for every $k\geq 0$ a left adjoint, which agrees on the level of underlying $G$-spectra with the slice truncation $\tau_{\leq k}$.
\end{lemma}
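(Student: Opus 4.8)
The plan is to mimic the proof of the non-equivariant statement in the preceding subsection, where the inclusion of $1$-truncated connective $E_\infty$-rings into all connective $E_\infty$-rings was shown to have a left adjoint agreeing with $\tau_{\leq 1}$ on underlying spectra by invoking \cite[Proposition 5.5.6.18]{HTT} and \cite[Proposition 7.1.3.14]{HA}, together with \cite[Proposition 7.1.3.15(3)]{HA} for the identification on underlying objects. First I would recall that, by \cite{HHR}, the slice-coconnective categories $\Sp_G^{[0,k]}$ (and more generally $\Sp_G^{\leq k}$) are localizations of $\Sp_G$ — the slice truncation $\tau_{\leq k}$ is a left adjoint to the inclusion $\Sp_G^{\leq k} \hookrightarrow \Sp_G$ — and similarly $\tau_{\geq 0}$ exhibits $\Sp_G^{\geq 0}$ as a colocalization, so that $\Sp_G^{[0,k]}$ is the intersection of a localizing and a colocalizing subcategory and is itself presentable. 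The key point is that $\Sp_G^{[0,k]} \hookrightarrow \Sp_G^{\geq 0}$ (equivalently into $\Sp_G$, since the relevant $E_\infty$-rings are connective) is an accessible left-exact localization of a presentable $\infty$-category, i.e. $\tau_{\leq k}$ is a left-exact (in fact compatible with finite products) localization functor.

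The main step is then to lift this localization to commutative algebra objects. For this I would invoke \cite[Proposition 2.2.1.9]{HA} (or the relevant statement in Section 7.1 of \cite{HA}, cf.\ \cite[Proposition 7.1.3.14]{HA}): if $\mathcal{C}$ is a presentable symmetric monoidal $\infty$-category and $L\colon \mathcal{C}\to \mathcal{C}$ is an accessible localization which is compatible with the symmetric monoidal structure (meaning that if $f$ is an $L$-equivalence then so is $f\otimes \id_X$ for all $X$), then $L$ induces a localization $\CAlg(\mathcal{C}) \to \CAlg(\mathcal{C})$ whose local objects are the $E_\infty$-rings with underlying object in the local subcategory, and the localization functor on $\CAlg$ agrees with $L$ on underlying objects. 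Applied with $\mathcal{C} = \Sp_G^{\geq 0}$ (which is presentable symmetric monoidal under the smash product, with unit the connective sphere) and $L = \tau_{\leq k}$, this gives exactly the desired left adjoint to $\CAlg(\Sp_G^{[0,k]}) \hookrightarrow \CAlg(\Sp_G^{\geq 0}) = \CAlg(\Sp_G)^{\geq 0}$, together with the statement that it agrees with $\tau_{\leq k}$ on underlying $G$-spectra. (The last identification $\CAlg(\Sp_G)^{\geq 0} \simeq \CAlg(\Sp_G^{\geq 0})$ is because $\tau_{\geq 0}$ is compatible with the symmetric monoidal structure and the smash product of connective $G$-spectra is connective, by the analogue of \cite[Proposition 7.1.3.14]{HA} in the equivariant setting.)

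The one genuinely nontrivial hypothesis to check — and the place I expect the argument to require a small but real input — is the \emph{monoidal compatibility} of $\tau_{\leq k}$: one must show that if $f\colon X\to Y$ is a $\tau_{\leq k}$-equivalence (i.e. $\tau_{\leq k}f$ is an equivalence, equivalently $\cofib(f)$ is $k$-slice connected in the sense $\tau_{\leq k}\cofib(f) = 0$) then $f\otimes \id_Z$ is again a $\tau_{\leq k}$-equivalence for every connective $Z$. Equivalently, the class of $k$-slice-connected connective $G$-spectra must be a $\otimes$-ideal among connective $G$-spectra. This is where I would use \cref{lem:sliceHurewicz}: a connective $X$ is $k$-slice connected iff $H\underline{\Z}\otimes X$ is, so it suffices to check the ideal property after smashing with $H\underline{\Z}$, where it becomes a statement about vanishing of $\underline{H}_V$ in the range $|V|\leq k$, $V = i\rho$ or $i\rho-1$; and for a connective $Z$ one has $H\underline{\Z}\otimes Z$ built from cells $S^V\otimes H\underline{\Z}$ with $V$ of this shape and $|V|\geq 0$, so smashing a $k$-slice-connected object with such cells only raises slice-connectivity. (Alternatively one can cite directly that the slice filtration on $\Sp_G$ is multiplicative, as established in \cite[Section 4]{HHR}, which immediately gives that $\tau_{\leq k}$ is a smashing-compatible localization on connective objects.) Once this compatibility is in hand, the abstract machinery of \cite{HA} delivers the conclusion with no further work.
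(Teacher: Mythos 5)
Your proposal follows essentially the same route as the paper: establish presentability of $\Sp_G^{[0,k]}$, verify that the slice truncation $\tau_{\leq k}$ is a localization of $\Sp_G^{\geq 0}$ compatible with the smash product, and then descend to $\CAlg$ via \cite[Proposition 2.2.1.9]{HA} (the paper spells out this last descent step explicitly, checking that the induced operad maps are adjoint using the unit/counit characterization of \cite{RiehlVerityAdjunction}, rather than citing a packaged statement, but that is a presentational difference). The one point where your preferred argument falls short of the statement is the verification of monoidal compatibility: you propose to reduce the $\otimes$-ideal property of $\{X \text{ connective}: \tau_{\leq k}X=0\}$ to homology via \cref{lem:sliceHurewicz}, but that lemma (and its proof, which uses that $C_2$-slices have the form $S^W\tensor HM$ with $W=i\rho$ or $i\rho-1$) is specific to $G=C_2$, whereas the lemma at hand is stated for an arbitrary finite group $G$; moreover the claim that $H\underline{\Z}\tensor Z$ is built from cells $S^V\tensor H\underline{\Z}$ with $V$ of regular-representation shape is not justified (connective $Z$ is built from slice cells, which also include induced cells $G_+\wedge_H S^{m\rho_H}$, so the asserted cell structure needs an argument). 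Your parenthetical alternative is the correct fix and is exactly what the paper does: cite the multiplicativity of the slice filtration, \cite[Proposition 4.26]{HHR}, which gives directly that $X$ connective and $Y\geq k+1$ imply $X\tensor Y\geq k+1$ for every finite $G$, after which the abstract machinery applies as you describe.
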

\begin{proof}
Connective $G$-spectra form a presentable $\infty$-category with compact generators the $\Sigma^\infty G/H_+$. We obtain $\Sp^{[0,k]}_G$ by localizing $\Sp_G^{\geq 0}$ at the collection $S$ of maps $C \to 0$ for $C$ a slice cell of dimension greater than $k$. By \cite[Proposition 5.5.4.15]{HTT}, $\Sp^{[0,k]}_G$ is presentable again. 

If $X$ is connective and $Y\geq k+1$ in the slice filtration, then $X \tensor Y\geq k+1$ by \cite[Proposition 4.26]{HHR}. Thus, $\tau_{\leq k}$ is compatible with $\tensor$ in the following sense: if $X \to Y$ in $\Sp_G^{\geq 0}$ induces an equivalence $\tau_{\leq k}X \to \tau_{\leq k}Y$, then $\tau_{\leq k}(X\tensor Z) \to \tau_{\leq k}(Y \tensor Z)$ is an equivalence for every $Z\in \Sp_G^{\geq 0}$. By \cite[Proposition 2.2.1.9]{HA}, $\Sp^{[0,k]}_G$ inherits the structure of a symmetric monoidal $\infty$-category from $\Sp_G^{\geq 0}$ and $\tau_{\leq k}$ is strong symmetric monoidal, while the inclusion $\Sp^{[0,k]}_G \to \Sp_G^{\geq 0}$ is lax symmetric monoidal. The same proposition gives that the resulting maps $\left(\Sp^{[0,k]}_G\right)^{\tensor} \to \left(\Sp_G^{\geq 0}\right)^{\tensor}$ and $\left(\Sp^{\geq 0}_G\right)^{\tensor} \to \left(\Sp_G^{[0,k]}\right)^{\tensor}$ of $\infty$-operads are adjoint. Since commutative algebras in such an $\infty$-operad $\CC^{\tensor}$ are defined as sections of $\CC^{\tensor} \to \mathrm{NFin}_*$ as maps of operads, we see that the resulting maps between $\CAlg(\Sp^{[0,k]}_G)$ and $\CAlg(\Sp_G^{\geq 0})$ are indeed adjoint. Here, we use the characterization of an adjunction given by \cite{RiehlVerityAdjunction}, namely the existence of a unit and counit, satisfying the triangle identities up to homotopy. 
\end{proof}

\begin{prop}\label{prop:Killingpi1C2}
Let $R$ be a connective $E_\infty$-ring $C_2$-spectrum with $\underline{\pi}_0^{C_2} = \Z_S$ being a localization of $\Z$ and $\overline{\eta} = 0 \in \pi_{\sigma}^{C_2}R$. Then there is an $E_\infty$-ring $C_2$-spectrum $R'$ with an $E_\infty$-map $R'\to R$ inducing an equivalence on slices in degree $0$ and degrees at least $2$ and such that $\tau_1R' = 0$. Moreover, for every other $R'' \to R$ with these properties, there is an equivalence $R''\to R'$ of $E_{\infty}$-ring $C_2$-spectra over $R$. 
\end{prop}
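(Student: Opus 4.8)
The plan is to transcribe the proof of \cref{prop:killingpi1} into the slice-filtered $C_2$-equivariant setting, substituting $\overline{\eta}$ for $\eta$, the $E_\infty$-cone $C^{\S}(\overline{\eta})$ for $C^{\S}(\eta)$, slice truncation for Postnikov truncation, $H\underline{\Z}$ for $H\Z$, and $H\underline{\Z_S} = \tau_0 R$ for $H\pi_0 R$; the categorical lemma above (on the left adjoint to $\CAlg(\Sp_{C_2}^{[0,1]}) \to \CAlg(\Sp_{C_2})$) takes the place of \cite[Proposition 7.1.3.14]{HA}, and the étale-extension input is not needed because $\underline{\Z_S}$ is merely a localization of the constant Mackey functor $\underline{\Z}$. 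Almost everything then goes through verbatim; the only step that requires genuinely new input is the uniqueness part.

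For existence I would first use the hypothesis $\overline{\eta} = 0 \in \pi_{\sigma}^{C_2}R$: a choice of nullhomotopy gives an $E_\infty$-map $C^{\S}(\overline{\eta}) \to R$, and applying slice truncation together with the categorical lemma produces an $E_\infty$-map $\tau_{\le 1}C^{\S}(\overline{\eta}) \to \tau_{\le 1}R$ whose source is $H\underline{\Z}$, since $\tau_{\le 1}C^{\S}(\overline{\eta}) \simeq H\underline{\Z}$ as established above from \cref{lem:sliceHZ} and the slice-$2$-connectedness lemma. As the primes of $S$ are invertible in $\pi_0^{C_2}\tau_{\le 1}R = \Z_S$, this map factors through the localization $H\underline{\Z} \to H\underline{\Z}[S^{-1}] = H\underline{\Z_S}$, and I would define $R'$ to be the homotopy pullback in $\CAlg(\Sp_{C_2})$ of $R \to \tau_{\le 1}R \leftarrow H\underline{\Z_S}$. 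To read off its slices I would use that this pullback square is also a pushout, so $\cofib(R' \to R) \simeq \cofib(H\underline{\Z_S} \to \tau_{\le 1}R)$; the composite $H\underline{\Z_S} \to \tau_{\le 1}R \to \tau_0 R$ is a self-equivalence of $H\underline{\Z_S}$ (ring endomorphisms of $\underline{\Z_S}$ being trivial), so $H\underline{\Z_S} \to \tau_{\le 1}R$ splits the slice projection $\tau_{\le 1}R \to \tau_0 R$, giving $\tau_{\le 1}R \simeq \tau_0 R \oplus \tau_1 R$ and $R' \simeq \fib(R \to \tau_1 R)$. Applying $\tau_{\ge k}$ for $k \ge 2$ (which preserves fibres, being a coreflection) and using $\tau_{\ge k}\tau_1 R = 0$ yields $\tau_{\ge k}R' \xrightarrow{\sim} \tau_{\ge k}R$, hence equivalences $\tau_k R' \xrightarrow{\sim} \tau_k R$ for $k \ge 2$; applying $\tau_{\le 1}$ (which preserves cofibre sequences) to $R' \to R \to \tau_1 R$ gives $\tau_{\le 1}R' \simeq \fib(\tau_{\le 1}R \to \tau_1 R) \simeq \tau_0 R$, which is slice-$0$-truncated, so $\tau_0 R' \xrightarrow{\sim} \tau_0 R$ and $\tau_1 R' = 0$.

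For uniqueness I would argue as in \cref{prop:killingpi1}: it suffices to see that the $E_\infty$-map $H\underline{\Z_S} \to \tau_{\le 1}R$ used above is essentially unique, for then any $R'' \to R$ with the stated properties has $\tau_{\le 1}R'' \simeq \tau_0 R'' \simeq H\underline{\Z_S}$ (as $\tau_1 R'' = 0$) and, by the same slice computation applied to $\cofib(R'' \to R)$, is recovered from the maps $R'' \to R$ and $R'' \to \tau_{\le 1}R'' \to \tau_{\le 1}R$ as a homotopy pullback of a cospan equivalent to the one defining $R'$. By the categorical lemma, $\Map_{\CAlg}(H\underline{\Z}, \tau_{\le 1}R) \simeq \Map_{\CAlg}(C^{\S}(\overline{\eta}), \tau_{\le 1}R)$, which is the space of nullhomotopies of $\overline{\eta}$ in $\tau_{\le 1}R$, i.e.\ $\Map_{\Sp_{C_2}}(S^{1+\sigma}, \tau_{\le 1}R)$; I would show this is contractible by checking $\pi_{k+\sigma}^{C_2}\tau_{\le 1}R = 0$ for all $k \ge 1$, using the fibre sequence $\tau_1 R \to \tau_{\le 1}R \to \tau_0 R$, the fact — recalled earlier in the paper — that the $1$-slice $\tau_1 R$ has the form $\Sigma^{\sigma}HM$ for a Mackey functor $M$, and the vanishing $\pi_{k+\sigma}^{C_2}HM = 0$ for $k \ge 1$ and any Mackey functor $M$ (obtained by smashing $HM$ with the cofibre sequence $C_{2+} \to S^0 \xrightarrow{a_{\sigma}} S^{\sigma}$ and using that $C_{2+} \wedge (-)$ is detected on underlying homotopy). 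Contractibility of $\Map_{\CAlg}(H\underline{\Z}, \tau_{\le 1}R)$, together with the fact that the constructed map already inverts $S$, then forces $\Map_{\CAlg}(H\underline{\Z_S}, \tau_{\le 1}R) \simeq \ast$ as well, and uniqueness of the cospan gives $R'' \simeq R'$ over $R$.

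The step I expect to be the real obstacle is precisely the contractibility of $\Map_{\Sp_{C_2}}(S^{1+\sigma}, \tau_{\le 1}R)$ in the uniqueness argument: in the non-equivariant proof the analogous space was visibly contractible by coconnectivity, but slice-$1$-truncatedness does not by itself control the $RO(C_2)$-graded groups $\pi_{k+\sigma}^{C_2}$, so one genuinely has to go through the explicit slice description of $\tau_1 R$ and the Mackey-functor computation above. The remaining friction points — compatibility of $\tau_{\le 1}$ with $E_\infty$-structures and the exactness properties of $\tau_{\ge k}$ — are handled respectively by the categorical lemma above and by the adjointness of the slice truncations.
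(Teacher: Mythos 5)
Your construction is the same as the paper's: the paper defines $R'$ by exactly the pullback of $R \to \tau_{\leq 1}R \leftarrow H\underline{\Z}_S$ (it localizes the cone, $H\underline{\Z}_S = \tau_{\leq 1}C^{\S}(\overline{\eta})_S$, where you instead invert $S$ on $H\underline{\Z}$ using that $\tau_{\leq 1}R$ is $S$-local; these are interchangeable), and it dismisses uniqueness with ``analogous to \cref{prop:killingpi1}''. You have correctly identified the one point where the analogy needs genuine equivariant input, namely that slice-$1$-coconnectivity of $\tau_{\leq 1}R$ does not by itself kill the groups $\pi_{k+\sigma}^{C_2}\tau_{\leq 1}R$ controlling the space of nullhomotopies of $\overline{\eta}$, and your computation (via $\tau_1R \simeq \Sigma^{\sigma}HM$, $\tau_0R = H\underline{\Z}_S$, and the cofibre sequence $(C_2)_+ \to S^0 \xrightarrow{a_\sigma} S^{\sigma}$) is correct and supplies detail the paper omits. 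Replacing the \'etale-extension step by the localization argument is also fine, since here $\pi_0$ is only $\Z_S$.

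One local repair is needed: the parenthetical justifications ``$\tau_{\geq k}$ preserves fibres, being a coreflection'' and ``$\tau_{\leq 1}$ preserves cofibre sequences'' are false as general statements --- slice truncations, like Postnikov truncations, are not exact (e.g.\ nonequivariantly $\tau_{\geq 1}$ applied to $H\Z \to \ast \to \Sigma H\Z$ is no longer a fibre sequence, and $\tau_{\leq 1}$ does not preserve the cofibre sequence $\S \xrightarrow{2} \S \to \S/2$). The conclusions you draw are nevertheless correct, and the standard fix is orthogonality: since $\tau_1R$ is slice-$\leq 1$, the composite $\tau_{\geq 2}R \to R \to \tau_1R$ is null, giving a lift $\tau_{\geq 2}R \to R' = \fib(R \to \tau_1R)$ and hence a cofibre sequence $\tau_{\geq 2}R \to R' \to \tau_0R$; mapping into slice-$\leq 1$ objects then shows $\tau_{\leq 1}R' \simeq \tau_0R$ (so $\tau_0R' \simeq \tau_0R$ and $\tau_1R' = 0$), while $\tau_{\geq 2}R' \simeq \tau_{\geq 2}R$ gives the equivalence on slices in degrees $\geq 2$. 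With that substitution your argument goes through and matches the intended proof.
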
 
\begin{proof}
Since $\overline{\eta}$ is zero in $R$, we obtain a map $C^{\S}(\overline{\eta})_S \to R \to \tau_{\leq 1}R$ of $E_{\infty}$-ring $C_2$-spectra, which factors over $H\underline{\Z}_S = \tau_{\leq 1}C^{\mathbb{S}}(\overline{\eta})_S$. Define now $R'$ via the homotopy pullback square
\[\xymatrix{R'\ar[r]\ar[d] & H\underline{\Z}_S \ar[d] \\
R \ar[r] & \tau_{\leq 1}R} 
\]
The proof of uniqueness is analogous to \cref{prop:killingpi1}. 
\end{proof}

To formulate the consequences for $\tmf(\Gamma)$, we want to recall from \cite{H-M17} that a $C_2$-spectrum $E$ is strongly even if its odd slices vanish and its even slices are of the form $S^{k\rho}\tensor H\underline{A}$ or, equivalently, if $\underline{\pi}_{k\rho}E$ is constant and $\underline{\pi}_{k\rho-1}E =0$.

\begin{thm}\label{prop:tmf1(n)C2}
	For every set of primes $S$ and every congruence subgroup $\Gamma_1(n)\subset \Gamma \subset \Gamma_0(n)$ that is tame with respect to $\Z_S$, we can define a strongly even connective $E_{\infty}$-ring $C_2$-spectrum $\tmf(\Gamma)_S$ with an $E_{\infty}$-ring $C_2$-map $\tmf(\Gamma)_S \to \Tmf(\Gamma)_S$ that identifies the underlying homotopy groups of the source with $M(\Gamma; \Z_S)$. 
\end{thm}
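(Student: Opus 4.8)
The plan is to apply \cref{prop:Killingpi1C2} to the connective $E_\infty$-ring $C_2$-spectrum $R = \tau_{\geq 0}\Tmf(\Gamma)_S$, where $\tau_{\geq 0}$ is the slice connective cover (it preserves $E_\infty$-structures and, being trivial in positive slice degrees, does not disturb the slices of $\Tmf(\Gamma)_S$ there) and the $C_2$-action is the one of \cref{rem:C2actions}. One then has to check the two hypotheses of that proposition, namely $\underline{\pi}_0^{C_2}R \cong \underline{\Z}_S$ and $\overline{\eta}\cdot 1 = 0$ in $\pi_\sigma^{C_2}R$, and afterwards read off the strong evenness and the underlying homotopy from the resulting $E_\infty$-ring $C_2$-spectrum, which will be $\tmf(\Gamma)_S$.

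The common input for all of this is the $C_2$-equivariant slice structure of $R$, which I would obtain from the $RO(C_2)$-graded descent spectral sequence for the sheaf $\OO^{top}$ of $E_\infty$-ring $C_2$-spectra on $\MMb(\Gamma)$ --- the equivariant enhancement of the spectral sequence appearing in \cref{lem:eta}. Tameness makes $\MMb(\Gamma)$ of cohomological dimension $1$, so the spectral sequence is concentrated in the two sheaf-cohomology columns $0$ and $1$ and degenerates; and the involution acts on $\omega$ by $-1$ (\cref{rem:C2actions}), so that the coefficient Mackey-functor sheaves are those of the structure sheaf and of the $\omega^{\tensor k}$, each carrying the Weyl action that the orientation reversal built into the $\rho$-grading renders trivial. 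One reads off: $\underline{\pi}_0 R = \underline{\Z}_S$ (using $H^0(\MMb(\Gamma);\OO) = \Z_S$, valid for $\Gamma_1(n)\subset\Gamma\subset\Gamma_0(n)$); $\underline{\pi}_{k\rho}R = \underline{M_k(\Gamma;\Z_S)}$ is constant for all $k\geq 1$; $\underline{\pi}_{k\rho-1}R = 0$ for all $k\geq 2$; and the single remaining group $\underline{\pi}_{\rho-1}R = \underline{\pi}_\sigma R$ is built from $H^1(\MMb(\Gamma)_S;\omega)$ with restriction to the underlying level $\pi_1^e R = H^1(\MMb(\Gamma)_S;\omega)$ injective. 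The first of these is precisely the hypothesis $\underline{\pi}_0^{C_2}R \cong \underline{\Z}_S$, and the last one reduces $\overline{\eta}\cdot 1 = 0$ to its underlying statement: under restriction $\overline{\eta}\cdot 1$ maps to $\eta\cdot 1$, which is $0$ by \cref{lem:eta}, so $\overline{\eta}\cdot 1 = 0$.

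Now \cref{prop:Killingpi1C2} yields an essentially unique connective $E_\infty$-ring $C_2$-spectrum $\tmf(\Gamma)_S$ with an $E_\infty$-ring $C_2$-map $\tmf(\Gamma)_S \to R \to \Tmf(\Gamma)_S$ that is an equivalence on slices in degree $0$ and in all degrees $\geq 2$ and has $\tau_1\tmf(\Gamma)_S = 0$. Its odd slices therefore vanish (degree $1$ by construction, degrees $\geq 3$ because $\underline{\pi}_{k\rho-1}R = 0$ for $k\geq 2$), its $0$-slice is $H\underline{\Z}_S$, and for $k\geq 1$ its $2k$-slice is $S^{k\rho}\tensor H\underline{M_k(\Gamma;\Z_S)}$ with constant coefficient Mackey functor; hence $\tmf(\Gamma)_S$ is strongly even. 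Finally, the underlying spectrum of a $1$-slice is an Eilenberg--MacLane spectrum concentrated in degree $1$, so killing $\tau_1$ changes $\pi_*$ only in degree $1$; thus the underlying homotopy of $\tmf(\Gamma)_S$ agrees with that of $R$ away from degree $1$ and vanishes there, which by the ordinary descent spectral sequence is exactly the ring $M(\Gamma;\Z_S)$ (in particular $\tmf(\Gamma)_S$ restricts to the non-equivariant spectrum of \cref{thm:tmfGamma}, by the uniqueness in \cref{prop:killingpi1}).

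The main obstacle is the equivariant bookkeeping of the second paragraph, and within it the injectivity of the restriction $\pi_\sigma^{C_2}R \to \pi_1^e R$ --- equivalently, the assertion that $\overline{\eta}\cdot 1$ is detected on the underlying spectrum, which is the genuine $C_2$-analogue of \cref{lem:eta}. If one wishes to avoid a careful treatment of the $RO(C_2)$-graded descent spectral sequence, this injectivity can instead be deduced from the isotropy separation sequence: smashing $EC_{2+}\to\S\to\widetilde{E}C_2$ with $S^{-\sigma}\tensor R$ gives a long exact sequence relating $\pi_\sigma^{C_2}R$, its free part, and $\pi_0\Phi^{C_2}R$, so it suffices to know that $\Phi^{C_2}R$ contributes nothing in the relevant degree --- which is governed by the fixed locus of the involution on $\MMb(\Gamma)$, i.e.\ by the $2$-torsion in the universal level structure (empty for $n\geq 3$) --- together with the vanishing of $\eta$ on the underlying spectrum. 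Alternatively, as in the proof of \cref{lem:eta}, one first reduces the $\overline{\eta}$-statement to $\Gamma = \Gamma_1(n)$ via the $\Gamma/\Gamma_1(n)$-invariants argument and settles that case directly.
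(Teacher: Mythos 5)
Your overall skeleton matches the paper's: take $R=\tau_{\geq 0}\Tmf(\Gamma)_S$, verify $\overline{\eta}=0$ by injectivity of the restriction $\pi_\sigma^{C_2}R\to\pi_1^eR$ together with \cref{lem:eta}, apply \cref{prop:Killingpi1C2}, and deduce strong evenness from vanishing of $\underline{\pi}_{k\rho-1}$ and constancy of $\underline{\pi}_{k\rho}$. But the key input is not justified. First, before any $RO(C_2)$-graded or slice statement makes sense you must say which \emph{genuine} $C_2$-structure you put on $\Tmf(\Gamma)$: the action of \cref{rem:C2actions} is only a Borel action, and the paper explicitly takes the cofree spectrum $\Tmf(\Gamma)^{E(C_2)_+}$. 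Second, your central tool, an ``$RO(C_2)$-graded descent spectral sequence for the sheaf $\OO^{top}$ of $E_\infty$-ring $C_2$-spectra on $\MMb(\Gamma)$'', does not exist in this framework: $\OO^{top}$ is a sheaf of nonequivariant ring spectra, and no sheaf of genuine $C_2$-spectra on $\MMb(\Gamma)$ with sheaf-cohomology-of-Mackey-functors $E_2$-page has been constructed. The degeneration and Mackey-constancy you read off from it (constancy of $\underline{\pi}_{k\rho}R$, vanishing of $\underline{\pi}_{k\rho-1}R$, injectivity of $\pi_\sigma^{C_2}R\to\pi_1^eR$) is exactly the nontrivial content, and it is delicate precisely $2$-locally, where $C_2$ has infinite cohomological dimension and the honest (homotopy fixed point) spectral sequence for the cofree structure does not collapse for free. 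The paper handles this by a case distinction: if $\tfrac12\in\Z_S$ the homotopy fixed point spectral sequence degenerates and the sign analysis on $\omega$ gives constancy; if $\Gamma$ is tame for $\Z_{(2)}$ it invokes the substantive external computation \cite[Theorem 6.16]{MeiTopLevel} (and the line after (6.15) there for the injectivity). Your argument, as written, silently assumes the output of that theorem.

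Your proposed fallback via isotropy separation is also not sound: for the cofree structure (and for slice covers of it) $\Phi^{C_2}$ is a Tate-type construction, which is \emph{not} governed by the fixed locus of the involution on $\MMb(\Gamma)$; freeness of the stack involution would at best give a Galois-type vanishing for the periodic, non-compactified spectra, and it fails for connective covers (compare $ku^{tC_2}\neq 0$ while $KU^{tC_2}=0$; likewise $\Phi^{C_2}\tmf_1(3)\neq 0$ in \cite{H-M17}). Your third alternative, reducing to $\Gamma_1(n)$ by a $\Gamma/\Gamma_1(n)$-invariants argument, is reasonable but again needs the $\Gamma_1(n)$-case input, i.e.\ the equivariant evenness and injectivity statements of \cite{MeiTopLevel}, which your write-up never actually supplies. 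So the proposal is the right strategy with a genuine gap at its computational core.
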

\begin{proof}
	Equip $\Tmf(\Gamma) \simeq \Tmf(\Gamma)^{E(C_2)_+}$ with the cofree structure of a $C_2$-spectrum. We claim that
	\begin{enumerate}
	    \item  $\overline{\eta} \in \pi_{\sigma}^{C_2}\tau_{\geq 0}\Tmf(\Gamma)$ is zero,
	    \item the only odd slice of $\tau_{\geq 0}\Tmf(\Gamma)$ is $\tau_1$, and
	    \item the even slices of $\Tmf(\Gamma)$ are of the form $S^{k\rho}\tensor H\underline{A}$.    
	\end{enumerate}
	Given these claims, applying \cref{prop:Killingpi1C2} to $R = \tau_{\geq 0}\Tmf(\Gamma)$ yields the $C_2$-spectrum $R' = \tmf(\Gamma)$ with the required properties: the first claim implies that we can apply \cref{prop:Killingpi1C2}, while the other two ensure that $\tmf(\Gamma)$ is strongly even. 
	
	For proving the claims, we will distinguish the (overlapping) cases that $\frac12\in \Z_S$ and that $\Gamma$ is tame for $\Z_{(2)}$. 
	
	For the first claim, note that $\pi_{\sigma}^{C_2}\tau_{\geq 0}\Tmf(\Gamma) \cong \pi_{\sigma}^{C_2}\Tmf(\Gamma)$ (e.g.\ since $S^{\sigma}$ is a slice cell). The restriction map $\pi_{\sigma}^{C_2}\Tmf(\Gamma) \to \pi_1\Tmf(\Gamma)$ is an injection: if $\frac12\in \Z_S$, this follows from the homotopy fixed points spectral sequence; else, use the line after (6.15) in \cite{MeiTopLevel}. Since $\overline{\eta}$ restricts to $\eta \in \pi_1\Tmf(\Gamma)$, \cref{lem:eta} implies thus the vanishing of $\overline{\eta}$. 
	
	If $\Gamma$ is tame for $\Z_{(2)}$, \cite[Theorem 6.16]{MeiTopLevel} yields the last two claims. If $\frac12\in S$, we obtain $\pi_{k\rho-1}^{C_2}\Tmf(\Gamma) =0$ by the homotopy fixed point spectral sequence since $\pi_{2k-1}\Tmf(\Gamma) = 0$ for $k>1$ by \cite[Section 2.1]{MeiTopLevel}. This yields the second claim by \cite[Proposition 2.9]{H-M17}. For the third claim it is enough to show that $\underline{\pi}_{k\rho}\Tmf(\Gamma)$ are constant Mackey functors (see \cite[Proposition 2.13]{H-M17}). This follows again from the homotopy fixed point spectral sequence and the fact that the $C_2$-action on $\pi_{k\rho}\Tmf(\Gamma)$ is trivial: indeed, $C_2$ acts by $(-1)^k$ on $\pi_{2k}\Tmf(\Gamma)$ (see \cref{rem:C2actions}) and the presence of $k\sigma$ twists the action by the same sign. 
\end{proof}

\begin{remark}
    The case that $\Gamma = \Gamma_0(n)$ is not excluded in the previous theorem, but one easily checks that $\Gamma_0(n)$ can only be tame if $\frac12\in\Z_S$. In this case, we obtain simply the cofree $C_2$-spectrum of $\tmf_0(n)_S$ with the trivial action. 
\end{remark}

\begin{construction}\label{constr:Gamma'}
    Given $\Gamma' \subset \Gamma \subset \Gamma_0(n)$ with $\Gamma'$ tame with respect to $\Z_S$ and $\Gamma/\Gamma' \cong C_2$, we can extend our previous definition by defining $\tmf(\Gamma)_S$ as $\tmf(\Gamma')_S^{C_2}$ (so e.g.\ $\tmf_0(3) = \tmf_1(3)^{C_2}$ as in \cite{H-M17}). If $\Gamma$ itself is already tame, then $\frac12\in\Z_S$. One then easily computes (e.g.\ with the slice spectral sequence) that $\pi_*\tmf(\Gamma')_S^{C_2} \cong \pi_*\tmf(\Gamma)_S$ and one can use the uniqueness part of \cref{thm:tmfGamma} to identify our new definition with the previous one. 
\end{construction}

\begin{remark}\label{rem:fiber}
    In the setting of the previous construction, the map $\tmf(\Gamma) \to \tau_{\geq 0}\Tmf(\Gamma)$ is an isomorphism in $\pi_*$ for $\ast \geq 2$ even if $\Gamma$ is not tame. Indeed, the cofiber of $\tmf(\Gamma') \to \tau_{\geq 0}\Tmf(\Gamma')$ is the target's first slice and thus by \cite[Theorem 6.16]{MeiTopLevel} equivalent to $\Sigma^{\sigma} HM$ for $M$ being the constant Mackey functor on $H^1(\MMb(\Gamma')_S; \omega) \cong \pi_1\Tmf(\Gamma')$. We directly observe that the non-equivariant homotopy groups of $\Sigma^{\sigma} HM$ vanish in degrees at least $2$. Moreover the cofiber sequence $(C_2)_+ \to S^0 \to S^{\sigma}$ induces a long exact sequence 
\[\pi^{e}_{k}HM \to \pi_k^{C_2}HM \to \pi_{k-\sigma}^{C_2}HM \to \pi^e_{k-1} HM \xrightarrow{\mathrm{tr}} \pi^{C_2}_{k-1}HM,\]
which implies that $\pi_k^{C_2}\Sigma^{\sigma}HM = \pi_{k-\sigma}^{C_2}HM = 0$ for $k\geq 2$ and actually also for $k=1$ if $\mathrm{tr}$ is injective, i.e.\ if $\pi_1\Tmf(\Gamma')$ has no $2$-torsion. Thus, $\tmf(\Gamma) \to \tau_{\geq 0}\Tmf(\Gamma)$ is indeed an isomorphism in $\pi_*$ for $\ast \geq 2$ and even for $\ast = 1$ if $\pi_1\Tmf(\Gamma')$ has no $2$-torsion.
\end{remark}

\section{Realization of Hirzebruch's level-$n$ genus}\label{sec:genus}
In the previous section we have defined ring spectra $\tmf_1(n) = \tmf(\Gamma_1(n))$. 
The spectra $\tmf_1(n)$ are even for $n\geq 2$ and thus complex orientable. We want to show that there is a complex orientation for $\tmf_1(n)$ such that the corresponding map 
$$MU_{2*} \to \tmf_1(n)_{2*} \cong M(\Gamma_1(n);\Z[\frac1n])$$
agrees with the level-$n$ genus introduced by Hirzebruch \cite{HirElliptic} and Witten \cite{WittenIndex} and studied e.g.\ in \cite{Krichever}, \cite{Fra92}, \cite{Herrera} and \cite{WWYElliptic}. We we recall its definition below. For this purpose it will be convenient to use algebro-geometric language, for which we recall first the following set of definitions.   

\begin{defi}
	A \emph{formal group} over a base scheme $S$ is a Zariski sheaf $F\colon \Sch_S^{op} \to \Ab$ that Zariski locally on an affine open $U=\Spec R \subset S$ is isomorphic to $\Spf R\llbracket t \rrbracket$. The $R$-modules $R\llbracket t\rrbracket$ glue to the \emph{structure sheaf} $\OO_F$ on $S$ and the $R$-modules $(R\llbracket t\rrbracket/t)\cdot dt$ glue to the line bundle $\omega_{F/S}$.\footnote{If $p\colon C \to S$ is a (generalized) elliptic curve and $F$ is the formal completion of $\EE$, this agrees with $\omega_{C/S} = p_*\Omega^1_{C/S}$.} An \emph{invariant differential} of a formal group $F$ is a trivialization of $\omega_{F/S}$. A \emph{coordinate} is a section $s$ of $\OO_F$ that is of the form $a_0t + a_1t^2+\cdots$ with $a_0\in R^\times$ for every local trivialization $F|_{\Spec R} \cong \Spf R\llbracket x \rrbracket$. 
\end{defi}
\begin{remark}
    There are different ways to state the definition of a formal group, e.g.\ as an abelian group object in one-dimensional formal Lie varieties (see \cite[Definitions 1.29 and 2.2]{GoerssModuliFG}). To compare them, note that our formal groups are automatically fpqc sheaves since $\Spf R\llbracket t \rrbracket$ is an fpqc sheaf. On the other hand, a trivialization of the sheaf of differentials of a one-dimensional formal Lie variety over $\Spec R$ determines an equivalence to $\Spf R\llbracket t \rrbracket$ and such trivializations exist Zariski locally. 
\end{remark}
We note that the differential $ds$ of a coordinate $s$ of a formal group $F$ is an invariant differential of $F$, sending $a_0t + a_1t^2+\cdots$ to $a_0dt$ locally. If $S = \Spec R$, a coordinate of $F$ is equivalent datum to an isomorphism $F \cong \Spf R\llbracket s\rrbracket$. 

Recall that given an arbitrary even ring spectrum $E$, a complex orientation is an element in $\widetilde{E}^2(\CP^{\infty})$ restricting to $1 \in \widetilde{E}^2(\CP^1)$ after a homeomorphism $\CP^1\cong S^2$ is chosen. The formal spectrum $\Spf E^{2*}(\CP^{\infty})$ is a formal group over $\Spec E^{2*}(\pt)$ and the line bundle $\omega$ corresponds to $\widetilde{E}^*(\CP^1)$; it thus comes with a canonical invariant differential corresponding to $1 \in \widetilde{E}^2(\CP^1)$. A complex orientation is thus a coordinate of $\Spf E^{2*}(\CP^{\infty})$ in degree $\ast = 1$ whose differential is the canonical invariant differential. 

We want to apply this to $E = \tmf_1(n)$ for $n\geq 2$. Essentially by construction, the maps $\pi_{2*}\tmf_1(n) \to \pi_{2*}\Tmf_1(n) \to H^0(\MMb_1(n);\omega_{\overline{\CC}/\MMb_1(n)}^{\tensor*})$ are isomorphisms, where $\overline{\CC}$ is the universal generalized elliptic curve over $\MMb_1(n)$. For convenience, let $\MMb_1^1(n)$ be the relative spectrum $\underline{\Spec}_{\MMb_1(n)}\left( \bigoplus \omega_{\overline{\CC}/\MMb_1(n)}^{\tensor*}\right)$, which is the total space of the $\G_m$-torsor associated with $\omega_{\overline{\CC}/\MMb_1(n)}$, i.e.\ classifies generalized elliptic curves with a point of exact order $n$ and an invariant differential. The resulting morphism 
\[\MMb_1^1(n) \to \Spec H^0(\MMb_1^1; \OO_{\MMb_1^1(n)}) \cong \Spec H^0(\MMb_1; \omega_{\overline{\CC}/\MMb_1(n)}^{\tensor*}) \cong \Spec \pi_{2*}\tmf_1(n)\]
is an open immersion whose image is covered by the non-vanishing loci of $c_4$ and $\Delta$ \cite[Proposition 3.5]{M-O20}. We denote by $\CC$ the pullback of $\overline{\CC}$ to $\MMb_1^1(n)$. Since $\tmf_1(n)[c_4]^{-1} \simeq \Tmf_1(n)[c_4^{-1}]$ and $\tmf_1(n)[\Delta^{-1}] \simeq \Tmf_1(n)[\Delta^{-1}]$ are elliptic cohomology theories, their formal groups are identified with the restrictions of $\widehat{\CC}$ to the non-vanishing loci of $c_4$ and $\Delta$, respectively, and as a result $\widehat{\CC}$ becomes identified with the restriction of $\Spf \tmf_1(n)^{2*}(\CP^{\infty})$ to $\MMb_1^1(n)$. As $\MMb_1^1(n) \subset \Spec \pi_{2*}\tmf_1(n)$ induces an isomorphism on global sections of the structure sheaf, coordinates on $\Spf \tmf_1(n)^{2*}(\CP^{\infty})$ are in bijection with those on $\widehat{\CC}$ and one checks that the canonical invariant differential on the former corresponds to the canonical invariant differential on the latter. Summarizing we obtain:

\begin{lemma}\label{lem:cxor}
	Complex orientations $MU \to \tmf_1(n)$ are in bijection with coordinates of $\widehat{\CC}$, which are homogeneous of degree $1$ and have the canonical invariant differential as differential.
\end{lemma}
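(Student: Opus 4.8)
The plan is to split the claimed bijection into two steps; write $E=\tmf_1(n)$, which is even (for $n\geq 2$), hence complex orientable. The first step is classical: a complex orientation of $E$ is the same datum as a homotopy ring map $MU\to E$ by the universality of $MU$ among complex orientable ring spectra, and---as recalled in the two paragraphs preceding the statement---it is also the same as a coordinate of the formal group $\Spf E^{2*}(\CP^{\infty})$ over $\Spec\pi_{2*}E$ that is homogeneous of degree $1$ and whose associated invariant differential is the canonical one. So it remains to transport such coordinates, together with their differentials, along the identification---established in the discussion above---of $\widehat{\CC}$ with the restriction of $F:=\Spf E^{2*}(\CP^{\infty})$ along the open immersion $\iota\colon\MMb_1^1(n)\hookrightarrow\Spec\pi_{2*}E$.

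The second step makes precise the bijection on coordinates invoked there. By construction $\iota$ induces an isomorphism of graded rings $\pi_{2*}E=H^0(\Spec\pi_{2*}E;\OO)\xrightarrow{\cong}H^0(\MMb_1^1(n);\OO_{\MMb_1^1(n)})$, the target being $\bigoplus_m H^0(\MMb_1(n);\omega^{\tensor m})\cong\bigoplus_m\pi_{2m}E$ by the very definition of $\MMb_1^1(n)$. As $E$ is complex orientable, $F$ carries a global coordinate, so $\OO_F$ has global sections $\pi_{2*}E\llbracket t\rrbracket$; pulling back, $\OO_{\widehat{\CC}}=\OO_{\iota^{*}F}$ has global sections $H^0(\MMb_1^1(n);\OO_{\MMb_1^1(n)})\llbracket t\rrbracket$, and the restriction map between these is the coefficientwise extension of the isomorphism above, hence again an isomorphism. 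Since being a degree-$1$ coordinate---being of the form $a_0t+a_1t^2+\cdots$ in every local trivialization with $a_0$ a unit of the appropriate degree---refers only to the graded ring of global functions and the units in its graded pieces, both of which are preserved, this restriction map carries the degree-$1$ coordinates of $F$ bijectively onto those of $\widehat{\CC}$.

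Finally I would check that the canonical invariant differential of $F$ pulls back to the canonical invariant differential of $\widehat{\CC}$. Since $\MMb_1^1(n)$ is covered by the non-vanishing loci of $c_4$ and $\Delta$, it suffices to verify this over each of these, where $E$ is an elliptic cohomology theory with formal group the corresponding restriction of $\widehat{\CC}$; unwinding the identification $\widetilde{E}^{2}(\CP^{1})\cong\omega_{\CC}$ shows that the class $1$---which defines the canonical invariant differential of $F$---corresponds to the tautological trivialisation of $\omega_{\CC}$ furnished by the $\G_m$-torsor $\MMb_1^1(n)\to\MMb_1(n)$, i.e.\ to the canonical invariant differential of $\widehat{\CC}$. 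Composing the two bijections yields the statement. I do not expect a serious obstacle: the identification of the two formal groups and the matching of the two ``canonical'' invariant differentials were already carried out in the paragraph preceding the lemma, so the argument is essentially an assembly of facts already in hand, the only genuinely new point being the routine verification that $\iota$ induces a bijection on coordinates compatibly with the gradings.
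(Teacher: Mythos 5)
Your proposal is correct and follows essentially the same route as the paper: there the lemma is simply the summary of the preceding discussion, which already contains both of your steps — the identification of complex orientations with degree-$1$ coordinates of $\Spf \tmf_1(n)^{2*}(\CP^\infty)$ carrying the canonical invariant differential, and the transport of coordinates and differentials along the open immersion $\MMb_1^1(n) \subset \Spec \pi_{2*}\tmf_1(n)$, which is an isomorphism on global sections of the structure sheaf. Your elaborations (the coefficientwise power-series argument for the coordinate bijection and the check over the non-vanishing loci of $c_4$ and $\Delta$ that the two canonical invariant differentials match) just spell out what the paper leaves as ``one checks,'' and they are fine, noting that for homogeneous degree-$1$ coordinates the leading coefficient lies in $\Z[\frac1n]$ on both sides so unit-ness transfers.
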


 The Hirzebruch genus relies on a specific such coordinate, which we will construct momentarily. Basically we will follow \cite[Chapter 7]{HBJ}, but present a more algebro-geometric approach and give an independent treatment. The key point is the existence of a certain meromorphic function on a cover of a given generalized elliptic curve. Recall to the purpose of constructing this function that every section $P$ into the smooth part of a generalized elliptic curve $C \to S$ is an effective Cartier divisor \cite[Lemma 1.2.2]{K-M85}, i.e.\ the kernel $\OO_C(-(P))$ of $\OO_C \to P_*\OO_S$ is a line bundle. Given any linear combination of sections $P_i$, we denote by $\OO_C(\sum_in_i(P_{i}))$ the corresponding tensor product of line bundles. 

\begin{lemma}
	Let $n\geq 2$ and $S$ be a $\Z[\frac1n]$-scheme. Furthermore let $C/S$ be a generalized elliptic curve with $0$-section $e\colon S \to C$ and a chosen point $P\colon S \to C$ of exact order $n$ in the smooth locus.  
	\begin{enumerate}[(a)]
		\item The pullback of $e^*\OO_{C}((P) - (e))$ to $S$ is canonically isomorphic to $\omega_{C/S} = e^*\Omega^1_{C/S}$.
		\item Let $\lambda$ be an invariant differential on $C$. Then there exists a unique meromorphic function $h$ on $C$ with an $n$-fold zero at $e$ and an $n$-fold pole at $P$ as only pole whose restriction along $e$ coincides with $\lambda^n$ under the identification of the previous part. 
		\item There exists a degree-$n$ \'etale cover $q\colon C' \to C$ by a generalized elliptic curve and a meromorphic function $f$ on $C'$ with $f^n = q^*h$. 
	\end{enumerate}
\end{lemma}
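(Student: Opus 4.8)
The plan is to establish the three parts in order, with each building on the previous one, and with part (c) being the main obstacle.

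For part (a): the key is the adjunction unit $\OO_S \to e^*\OO_C$ together with the fact that $e$ is an effective Cartier divisor. Tensoring the exact sequence $0 \to \OO_C(-(e)) \to \OO_C \to e_*\OO_S \to 0$ by $\OO_C((e))$ and restricting along $e$, one identifies $e^*\OO_C(-(e))$ with the conormal bundle of $e$ in $C$, which is canonically $\omega_{C/S}$ since $C$ is smooth along $e$ (this is the standard identification of the conormal sheaf of the zero section with the cotangent space at the identity). Then $e^*\OO_C((P) - (e)) = e^*\OO_C((P)) \otimes e^*\OO_C(-(e))$, and the first factor is canonically trivial because $P$ and $e$ are disjoint sections (the point $P$ has exact order $n \ge 2$ and lies in the smooth locus, so it avoids $e$), giving a canonical trivialization of $e^*\OO_C((P))$. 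Hence $e^*\OO_C((P)-(e)) \cong \omega_{C/S}$ canonically.

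For part (b): I want to produce $h$ as a global section of $\OO_C(n(P))$ that vanishes to order $n$ at $e$, i.e.\ a section of $\OO_C(n(P) - n(e))$ that trivializes this line bundle away from $P$ and $e$. The divisor $n(P) - n(e)$ has degree $0$ on fibers and, because $P$ has exact order $n$, the line bundle $\OO_C(n(P) - n(e))$ is fiberwise trivial (this is precisely the condition that $P$ is $n$-torsion in the relative Picard/Jacobian, extended to generalized elliptic curves via the identification of the smooth locus of the identity component with $\mathrm{Pic}^0$). A cohomology-and-base-change argument (the pushforward $p_*\OO_C(n(P) - n(e))$ is a line bundle on $S$, and $p_*\OO_C = \OO_S$ with $R^1p_*\OO_C$ also a line bundle) shows that global meromorphic functions $h$ with the prescribed zero and pole behavior form a torsor under $\OO_S^\times$; the normalization $e^*h = \lambda^n$ under the identification of part (a) (using that $e^*$ of a section of $\OO_C(n(P)-n(e))$ lands in $e^*\OO_C(-n(e)) = \omega_{C/S}^{\otimes n}$, trivialized by $\lambda^n$) pins down $h$ uniquely. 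One should check that $h$ is nonzero, hence has exactly an $n$-fold pole at $P$ and no other poles, using that the leading term at $e$ is $\lambda^n \ne 0$.

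For part (c), the main obstacle: I need to extract an $n$-th root of $h$ after a degree-$n$ \'etale base change of $C$ itself (as a generalized elliptic curve), not merely of $S$. The natural candidate is $C' = C$ pulled back along the multiplication-by-$[\text{something}]$ or, more precisely, the cover classifying a point $Q$ with $nQ = P$; equivalently, pull back $C \to S$ along the $S$-scheme $S' = $ the fiber of $[n]\colon C \to C$ over $P$ (restricted to the smooth locus), which is a $C[n]$-torsor and hence finite \'etale of degree $n^2$ over $S$ — so one must instead take an appropriate quotient or a well-chosen degree-$n$ sub-cover. The cleaner route: on $C'$, the divisor $q^*(n(P) - n(e))$ becomes $n$ times the principal divisor $(n(Q)) - (n(e')) \sim n((Q) - (e'))$ for a suitable section, so $q^*h$ acquires an obvious $n$-th root $f$ up to a constant, which is then adjusted using the differential normalization as in (b). I expect the delicate points to be: (i) arranging that the cover is genuinely degree $n$ and again a generalized elliptic curve (one uses the structure theory of generalized elliptic curves and the \'etale locus of $[n]$, invoking $n$ invertible on $S$ so that $[n]$ is \'etale on the smooth locus and $C[n]$ is finite \'etale), and (ii) checking that $q^*h$ really is an $n$-th power of a meromorphic function on $C'$, i.e.\ that the line bundle $\OO_{C'}((Q) - (e'))$ whose $n$-th power is $q^*\OO_{C'}(n(P)-n(e))$ is itself trivial on $C'$ — which again comes down to $Q$ being torsion of exact order dividing $n$ on $C'$, so that $(Q) - (e')$ is principal. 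Assembling $f$ with $f^n = q^*h$ then follows by the same uniqueness/torsor argument as in part (b), now over $C'$.
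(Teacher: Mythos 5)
Parts (a) and (b) of your proposal are essentially the paper's argument: (a) is the conormal-sheaf identification (the paper quotes Hartshorne II.8.12 plus the fact that near $e$ the bundle $\OO_C((P)-(e))$ is just $\OO_C(-(e))$), and (b) is the observation that $\OO_C(n(P)-n(e))$ descends to the base because $nP=e$ in the group (the paper cites Katz--Mazur and Deligne--Rapoport rather than cohomology-and-base-change), identified with $\omega_{C/S}^{\otimes n}$ via (a) and normalized by $\lambda^n$. These are fine.

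The genuine gap is in (c), which is the heart of the lemma. First, you never actually produce the cover: your candidate, the fiber of $[n]$ over $P$, is a cover of the base $S$ (not of $C$) of degree $n^2$, and the promised ``appropriate quotient or well-chosen degree-$n$ sub-cover'' is left unspecified; worse, over the degenerate fibers the smooth locus is an extension of a finite cyclic group by $\Gm$, $[n]$ on $C^{\mathrm{sm}}$ is not finite of degree $n^2$, and $P$ need not be divisible by $n$ at all, so the ``divide $P$ by $n$'' strategy breaks down exactly where the word ``generalized'' matters. Second, the key verification that $q^*h$ is an $n$-th power is incorrect as stated: for a degree-$n$ isogeny-type cover $q$, the divisor $q^*(P)$ is the whole fiber over $P$, not a single section $Q$, so $q^*(n(P)-n(e))$ is not $n((Q)-(e'))$; and a degree-zero divisor $(Q)-(e')$ on an elliptic curve is principal only when $Q=e'$, not whenever $Q$ is torsion of order dividing $n$. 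What triviality of $q^*\OO_C((P)-(e))$ really requires is that $P$ lie in the kernel of the dual isogeny, and arranging that (and making sense of it over the cusps) is precisely the missing content. The paper avoids all of this by \emph{defining} $C'$ as the $\mu_n$-torsor (Kummer cover) over $C$ attached to the pair $(h,\OO_C((P)-(e)))$, so that the root $f$ with $f^n=q^*h$ exists tautologically; the remaining work, which your sketch does not address, is to show that this torsor is again a generalized elliptic curve, done by lifting $e$ to $C'$ using that $\lambda$ is an $n$-th root of $e^*h$ (part (a)) and by proving geometric connectedness of the fibers from the exact order $n$ of $P$, via Deligne--Rapoport II.1.17 and II.2.4.
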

\begin{proof}
	For the proof of (a), note that $\OO_C(-(e))$ is the ideal sheaf associated to the closed immersion $e$ and the pullback $e^*\OO_{C}((P) - (e))$ coincides with $\OO_C(-(e))/\OO_C(-(e))^2$ viewed as an $\OO_S$-module. Indeed: We can cover $S$ by opens of the form $U\cap S$, where $U\cong \Spec R$ is an affine open in $C$ not intersecting the image of $P$. The section $e$ corresponds to an element $s\in R$ and $U\cap S \cong \Spec S/s$. Then $e^*\OO_C((P)-(e))(U\cap S)$ is the $S/s$-module $sS \tensor_{S} S/s$, which is canonically isomorphic to the $S/s$-module $sS/s^2S$. 
	
	E.g.\ by \cite[Proposition II.8.12]{Har77} we obtain a canonical surjective map \[\OO_C(-(e))/\OO_C(-(e))^2 \to e^*\Omega^1_{C/S} = \omega_{C/S}\]
	between line bundles, which is hence an isomorphism. 
	
	For part (b), consider the line bundle $\OO_{C}(n(P) - n(e))$. Note that $n\cdot P - n\cdot e = e$ as points on $C$. By \cite[Theorem 2.1.2]{K-M85} in the case that $C$ is an elliptic curve and by \cite[Proposition II.2.7]{D-R73} for generalized elliptic curves, we deduce that $\OO_{C}(n(P) - n(e))$ is the pullback of a line bundle $\LL$ on $S$. By part (a), $\LL= e^*p^*\LL = \omega_{C/S}^{\tensor n}$. By \cite[Proposition II.1.6]{D-R73}, we see that the canonical map 
	\[\omega_{C/S}^{\tensor n} \to p_*p^*\omega_{C/S}^{\tensor n} \cong p_*\OO_{C}(n(P) - n(e))\]
	is an isomorphism. Thus
	\[\Gamma(\OO_{C}(n(P) - n(e))) \cong \Gamma(\omega_{C/S}^{\tensor n}), \]
	where the isomorphism can be identified with the pullback along $e$. Thus, there is a unique section $h$ of $\OO_{C}(n(P) - n(e))$ whose image is $\lambda^n$.  
	
	For part (c), consider the $\mu_n$-torsor $q\colon C' \to C$ associated with the problem of extracting an $n$-th root out of $q^*h$ as a section of $q^*\OO_C((P)-(e))$ (i.e.\ the $\mu_n$-torsor associated with the pair ($h$, $\OO_C((P)-(e))$) in the sense of \cite[p.\ 125]{Mil80}). By construction, the required root $f$ exists on $C'$. By \cite[Proposition II.1.17]{D-R73}, $C'$ has the structure of a generalized elliptic curve provided that we can lift $e$ to $C'$ and $C' \to S$ has geometrically connected fibers. For the first point, it suffices to provide a section of $C' \times_C S \to S$, i.e.\ to provide an $n$-th root of $e^*h$. Under the identification of part (a), this is provided by $\lambda$. For the second point, we assume that $S = \Spec K$ with $K$ algebraically closed of characteristic not dividing $n$ and that $C'$ is not connected. The stabilizer of a component $C_0'$ must be of the form $\mu_m$ with $m<n$ and thus $C' \cong C_0' \times_{\mu_m} \mu_n$. The $\mu_m$-torsor $C_0'$ is hence associated with a pair $(g, \OO_C((P)-(e)))$ with $g^{n/m} = h$. The section $g$ provides a trivialization of $\OO_C(m(P) - m(e))$. This implies $m\cdot P = e$ on $C'$ \cite[Corollaire II.2.4]{D-R73}, in contradiction with $P$ being of exact order $n$. 
\end{proof}

\begin{construction}
	Let $\CC$ be the universal generalized elliptic curve with a point of exact order $n$ over $\MMb_1^1(n)$. It comes by definition with a canonical invariant differential $\lambda$. From the preceding lemma, we obtain an $n$-fold \'etale cover $q\colon \CC'\to \CC$ together with a meromorphic function $f$ on $\CC'$ whose pullback along a lift of $e$ agrees with $\lambda$. This function $f$ provides a coordinate for $\widehat{\CC'} \cong \widehat{\CC}$. Moreover note that $f$ is uniquely determined by the requirements in the lemma because $\CC'$ is irreducible (since $\MMb_1^1(n)$ is irreducible and the locus of smoothness of $\CC'$ in it is dense) and thus every other $n$-th root of $h$ would have to differ by a root of unity, resulting in a different pullback to $\MMb_1^1(n)$.
	
	Pulling the  orientation induced from $f$ back along a map $\Spec \C \to \MMb_1(n)$ classifying $(\C/\Lambda, \frac1n, dz)$ results exactly in the coordinate and orientation chosen in \cite{HBJ}. 
\end{construction}

\begin{thm}\label{thm:HirzebruchRealization}
	For every $n\geq 2$, there is a unique complex orientation of $MU \to \tmf_1(n)$ realizing on homotopy groups the Hirzebruch genus. Moreover, this can be uniquely refined to a morphism $MU_{\R} \to \tmf_1(n)$ of $C_2$-ring spectra.
\end{thm}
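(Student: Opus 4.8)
The plan is to produce the orientation from the coordinate $f$ constructed above via \cref{lem:cxor}, to identify the resulting genus with Hirzebruch's, to deduce uniqueness from torsion-freeness of $\pi_{2*}\tmf_1(n)$, and then to bootstrap to the $C_2$-equivariant statement using that $\tmf_1(n)$ is strongly even.

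\emph{The non-equivariant statement.} The function $f$ restricts to a coordinate of $\widehat{\CC}$ that is homogeneous of degree $1$ and has the canonical invariant differential $\lambda$ as its differential, so by \cref{lem:cxor} it defines a complex orientation $MU \to \tmf_1(n)$. The induced ring map $\pi_{2*}MU \to \pi_{2*}\tmf_1(n) \cong M(\Gamma_1(n);\Z[\tfrac1n])$ classifies the formal group law of $\widehat{\CC}$ with respect to $f$, and by the construction above this formal group law becomes, after pullback along the points $\Spec\C\to\MMb_1(n)$ classifying $(\C/\Lambda,\tfrac1n,dz)$, the one defining Hirzebruch's level-$n$ genus. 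Since $M(\Gamma_1(n);\Z[\tfrac1n])$ is torsion-free and maps injectively to $M(\Gamma_1(n);\C)$, this identifies the above ring map with the Hirzebruch genus (in particular it re-proves that the genus takes values in $M(\Gamma_1(n);\Z[\tfrac1n])$). For uniqueness, recall that $\pi_{2*}MU$ is the Lazard ring, so the ring map induced by a complex orientation depends only on the associated formal group law; by \cref{lem:cxor} every complex orientation is given by a homogeneous degree-$1$ coordinate $g$ of $\widehat{\CC}$ with $dg=\lambda$, and since $g$ and $f$ then share the same leading term, $g=\theta(f)$ for a unique strict power series $\theta$, which realizing the Hirzebruch genus forces to be a strict automorphism of $\widehat{\CC}$ over $\Spec\pi_{2*}\tmf_1(n)$. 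But $M(\Gamma_1(n);\Z[\tfrac1n])$ embeds into the $\Q$-algebra $M(\Gamma_1(n);\Q)$, over which the logarithm exhibits every one-dimensional formal group as $\widehat{\G}_a$; the latter has no nontrivial strict automorphism, so $\theta=\id$ and the orientation is unique.

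\emph{The $C_2$-equivariant refinement.} Equip $\tmf_1(n)$ with the $C_2$-structure of \cref{prop:tmf1(n)C2}. Being strongly even, it satisfies $\underline{\pi}_{k\rho-1}\tmf_1(n)=0$ for all $k$, and the Mackey functors $\underline{\pi}_{k\rho}\tmf_1(n)$ are constant, so the restriction maps $\pi_{k\rho}^{C_2}\tmf_1(n)\to\pi_{2k}\tmf_1(n)$ are isomorphisms. Recall that a Real orientation of a ring $C_2$-spectrum $E$ is a class in $\widetilde{E}^{\rho}_{C_2}(\CP^\infty)$ — for $\CP^\infty$ with its complex-conjugation action — restricting to the unit on $\CP^1\simeq S^{\rho}$, that Real orientations of $E$ are the same data as $C_2$-ring maps $MU_{\R}\to E$, and that under the forgetful functor this matches the identification of complex orientations of the underlying spectrum $E^e$ with ring maps $MU\to E^e$ (see \cite{HHR} and the references therein). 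Filtering $\CP^\infty$ by the $S^{k\rho}$ and running the cellular obstruction theory, the obstructions to extending the unit from $\CP^1$ over the higher skeleta lie in $\pi_{k\rho-1}^{C_2}\tmf_1(n)=0$; hence a Real orientation of $\tmf_1(n)$ exists. The set of Real orientations of $\tmf_1(n)$ is then a torsor under the group of strict power series $\overline{x}+\sum_{i\ge2}c_i\overline{x}^i$ with $c_i\in\pi_{(i-1)\rho}^{C_2}\tmf_1(n)$, the set of complex orientations of $\tmf_1(n)$ is a torsor under the analogous group with coefficients in $\pi_{2(i-1)}\tmf_1(n)$, and the forgetful map is a map of (nonempty) torsors over the homomorphism of groups induced by the restriction maps. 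As these are isomorphisms, the forgetful map identifies the Real orientations of $\tmf_1(n)$ with the complex orientations of $\tmf_1(n)$. Combined with the first part, the unique complex orientation realizing the Hirzebruch genus lifts uniquely to a Real orientation, i.e.\ to a unique $C_2$-ring map $MU_{\R}\to\tmf_1(n)$ with the asserted underlying map.

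\emph{Expected main difficulty.} The two substantive inputs are that torsion-freeness of $\pi_{2*}\tmf_1(n)$ kills strict automorphisms of $\widehat{\CC}$ — this is what forces uniqueness of the complex orientation — and that strong evenness of $\tmf_1(n)$, i.e.\ the combination $\underline{\pi}_{k\rho-1}=0$ and constancy of $\underline{\pi}_{k\rho}$, is exactly what is needed both to produce a Real orientation and to make the set of Real orientations map bijectively onto the set of complex orientations. Granting these, the remaining steps are formal manipulations with the correspondence between orientations and maps out of $MU$ and $MU_{\R}$.
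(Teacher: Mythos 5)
Your proposal is correct and takes essentially the same route as the paper: the non-equivariant part via \cref{lem:cxor} and the coordinate $f$ (your torsion-freeness/strict-automorphism argument just makes the uniqueness explicit), and the equivariant part via the Hu--Kriz identification of $C_2$-ring maps $MU_{\R}\to\tmf_1(n)$ with Real orientations combined with strong evenness and the $k\rho$-cell structure of $\CP^{\infty}$. The only difference is bookkeeping: the paper directly asserts that the forgetful map $\tmf_1(n)^{\rho}_{C_2}(\CP^{\infty})\to\tmf_1(n)^{2}(\CP^{\infty})$ is an isomorphism, where you run the skeletal obstruction theory and a torsor comparison, which amounts to the same computation.
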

\begin{proof}
	The first part follows from \cref{lem:cxor} as the Hirzebruch genus is given by a coordinate on the formal group associated with the universal generalized elliptic curve on $\MMb_1^1(n)$. For the second point, we recall from \cite[Theorem 2.25]{H-K01} that $C_2$-ring morphisms $MU_{\R} \to \tmf_1(n)$ are in bijection with Real orientations of $\tmf_1(n)$, i.e.\ a lift of a complex orientation to a class $\tmf_1(n)^{\rho}_{C_2}(\CP^{\infty})$. As $\CP^{\infty}$ can be built by cells in dimensions $k\rho$, the strong-evenness of $\tmf_1(n)$ from \cref{prop:tmf1(n)C2} implies that the forgetful map 
	\[ \tmf_1(n)^{\rho}_{C_2}(\CP^{\infty}) \to \tmf_1(n)^2(\CP^{\infty}) \]
	is an isomorphism; thus every complex orientation of $\tmf_1(n)$ refines to a unique Real orientation. 
\end{proof}

\begin{rmk}
	We remark that in \cite{Fra92}, Franke already gave a related but different algebro-geometric treatment of the Hirzebruch genus. 
\end{rmk}

\begin{rmk}
    After the first version of this article became available, Senger has shown in \cite{SengerObstruction} that the map $MU \to \tmf_1(n)$ actually refines to one of $E_{\infty}$-ring spectra. He also gives a reformulation of our treatment above in terms of $\Theta^1$-structures.  
\end{rmk}

\section{Compactness, formality and faithfulness of $\tmf(\Gamma)$}\label{sec:compact}
Given a (tame) congruence subgroup of level $n$, we will show that $\tmf(\Gamma)$ is a faithful and perfect $\tmf[\frac1n]$-module. In contrast, for example $\tmf_1(3)$ will not be a perfect $\tmf_0(3)$-module, even rationally. The latter result relies on $\tmf_0(3)_{\Q}$ being formal (i.e.\ multiplicatively a graded Eilenberg--MacLane spectrum), a result we prove in greater generality in a subsection on its own. 

\subsection{All $\tmf(\Gamma)$ are perfect}
Recall that for an $A_{\infty}$-ring spectrum $R$, a perfect $R$-module is a compact object in the $\infty$-category of left $R$-modules. Equivalently, the $\infty$-category of perfect $R$-modules is the smallest stable sub-$\infty$-category of all left $R$-modules that contains $R$ and is closed under retracts. 
The goal of this section is to show that the spectra $\tmf(\Gamma)$, in the cases we defined them, are perfect $\tmf[\frac1n]$-modules.  The key technical tool is the following proposition. 

\begin{prop}\label{prop:regnoet}
Let $R$ be an $A_\infty$-ring spectrum such that
\begin{enumerate}
\item $\pi_0R$ is regular noetherian,
\item all $\pi_nR$ are finitely generated $\pi_0R$-modules, and
\item $H\pi_0R$ is perfect as a $\tau_{\geq 0}R$-module. 
\end{enumerate}
Let furthermore $M$ be a perfect $R$-module. Then $\tau_{\geq k}M$ is a perfect $\tau_{\geq 0}R$-module for every $k\in\Z$.
\end{prop}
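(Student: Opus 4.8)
The plan is to proceed by a double induction/reduction: first reduce from a general perfect $R$-module $M$ to the case $M = R$ itself, and then induct on the Postnikov tower of $R$ using hypotheses (1) and (2). For the first reduction, observe that the perfect $R$-modules are generated from $R$ under finite colimits and retracts. Since $\tau_{\geq 0}$ does not commute with all colimits, I cannot directly push $\tau_{\geq k}$ through; instead I would note that for a cofiber sequence $M' \to M \to M''$ of $R$-modules the connective covers $\tau_{\geq k}M'$, $\tau_{\geq k}M$, $\tau_{\geq k}M''$ fit into a fiber sequence only up to a controlled error term, so the cleanest route is: it suffices to show $\tau_{\geq k}R$ is perfect over $\tau_{\geq 0}R$ for all $k$, and then that the class of $R$-modules $M$ with "$\tau_{\geq k}M$ perfect over $\tau_{\geq 0}R$ for all $k$" is closed under (de)suspension, cofibers and retracts — the desuspension/suspension being the point that forces us to range over all $k\in\Z$ simultaneously. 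Shifting $M$ by $\Sigma$ shifts the truncation degree, so stability of this class under suspension is automatic once we have it for all $k$; closure under retracts is clear; closure under cofibers follows from the octahedral axiom applied to $\tau_{\geq k}M' \to \tau_{\geq k}M \to \tau_{\geq k}M''$ together with the fact that the connecting map fits $\tau_{\geq k}M''$ into an extension of $\tau_{\geq k}M$ by something built from $\pi_{k}$-data, all of which are perfect by the module-level version of the argument below. So the whole statement reduces to: $\tau_{\geq k}R$ is a perfect $\tau_{\geq 0}R$-module for every $k\in\Z$.

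For that, I would induct downward on $k$ starting from $k$ very large (where $\tau_{\geq k}R = 0$, vacuously perfect) — or rather, since we want it for all $k$ including negative $k$, argue as follows. Fix $k$. The truncation $\tau_{\geq k}R$ has a finite Postnikov-type filtration relative to $\tau_{\geq 0}R$? That's false in general since the tower is infinite below, but the point is that $\tau_{\geq 0}R$ is the base ring, so we only need finitely many stages: for $k \leq 0$ we have a fiber sequence $\tau_{\geq 1}R \to \tau_{\geq 0}R \to H\pi_0R$, and hypothesis (3) says $H\pi_0R$ is perfect, hence $\tau_{\geq 1}R$ is perfect (as the fiber of a map of perfect modules). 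Iterating, each $\tau_{\geq m}R$ for $m \geq 0$ sits in a fiber sequence $\tau_{\geq m+1}R \to \tau_{\geq m}R \to \Sigma^m H\pi_m R$, and $H\pi_m R$ is perfect over $\tau_{\geq 0}R$ because $\pi_m R$ is a finitely generated module over the regular noetherian ring $\pi_0 R$ (hypothesis (1) and (2)), so $H\pi_m R$ as an $H\pi_0 R$-module is perfect — this uses that over a regular noetherian ring every finitely generated module has finite projective dimension, i.e.\ admits a finite resolution by finite projectives — and then $H\pi_0 R$ is perfect over $\tau_{\geq 0}R$ by (3), and perfect-over-perfect is perfect. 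So by induction $\tau_{\geq m}R$ is perfect over $\tau_{\geq 0}R$ for all $m \geq 0$. For $k < 0$, write $\tau_{\geq k}R$ in the fiber sequence $\tau_{\geq 0}R \to \tau_{\geq k}R \to \tau_{\leq -1}\tau_{\geq k}R$, where the cofiber has only finitely many nonzero homotopy groups (degrees $k$ through $-1$), each finitely generated over $\pi_0R$, hence is built from finitely many $\Sigma^j H\pi_j R$ and is therefore perfect over $\tau_{\geq 0}R$ by the same regularity argument; thus $\tau_{\geq k}R$ is perfect as an extension of two perfect modules.

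The main obstacle I expect is the passage from "finitely generated $\pi_0R$-module $\pi_m R$" to "$H\pi_m R$ is a perfect $H\pi_0R$-module" and then to "perfect $\tau_{\geq 0}R$-module." The first half is a purely algebraic input — Serre's theorem that a regular noetherian ring has finite global dimension, so that a finitely generated module has a finite length resolution by finitely generated projectives, which realizes $H\pi_m R$ as a finite colimit of shifted copies of $H\pi_0 R$ in $H\pi_0R$-modules, i.e.\ a perfect $H\pi_0R$-module (one should be slightly careful that "noetherian" is needed to keep the syzygies finitely generated, and "regular" — equivalently finite global dimension in the noetherian case — to make the resolution finite). The second half is the transitivity principle: if $A \to B$ is a map of connective ring spectra with $B$ perfect over $A$, then any perfect $B$-module is perfect over $A$, applied with $A = \tau_{\geq 0}R$ and $B = H\pi_0R$ and using hypothesis (3). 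The bookkeeping in the double induction — making sure the error terms produced when $\tau_{\geq k}$ fails to be exact are themselves of the controlled "finitely many finitely generated homotopy groups" form — is where I would spend the most care, but there is no conceptual difficulty once hypothesis (3) and the regularity of $\pi_0R$ are in hand.
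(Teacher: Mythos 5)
Your proposal is correct and is essentially the paper's argument: both hinge on the same key lemma that a $\tau_{\geq 0}R$-module with finitely many homotopy groups, each finitely generated over the regular noetherian ring $\pi_0R$, is perfect (finite projective resolution over $H\pi_0R$ plus hypothesis (3) and transitivity), and both then run an induction over the thick subcategory generated by $R$, using that lemma to absorb the single-degree error terms measuring the failure of $\tau_{\geq k}$ to preserve cofiber sequences. The paper merely packages the cell induction slightly differently (taking $\tau_{\geq l}$ of the attaching cofiber sequence $\Sigma^lR \to N \to M$ and comparing the resulting $M'$ with $M$), which is the same move as your cofiber-closure step.
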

\begin{lemma}\label{lem:finitely}
With notation as in the statement of the proposition, let $X$ be a $\tau_{\geq 0}R$-module with only finitely many non-trivial homotopy groups, all finitely generated over $\pi_0R$. Then $X$ is a perfect $\tau_{\geq 0}R$-module.
\end{lemma}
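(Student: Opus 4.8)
The plan is to reduce to the case where $X$ has a single nonvanishing homotopy group, and then to the commutative algebra of $\pi_0 R$. Write $A = \tau_{\geq 0}R$, a connective $A_\infty$-ring with $\pi_0 A = \pi_0 R$ and $\pi_n A = \pi_n R$ for $n\geq 0$. Since suspension is an auto-equivalence of $\Mod_A$ preserving perfectness, I would first assume $X$ is connective, say with $\pi_i X = 0$ for $i > b$, and induct on $b$. The $A$-module Postnikov truncation (available since $A$ is connective) gives a cofiber sequence of $A$-modules $\tau_{\geq 1}X \to X \to H\pi_0 X$; here $\Sigma^{-1}\tau_{\geq 1}X$ has homotopy concentrated in degrees $[0,b-1]$, still finitely generated over $\pi_0 R$, hence perfect by induction, so $\tau_{\geq 1}X$ is perfect. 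As perfect $A$-modules are closed under cofibers, it remains to treat the base case $X \simeq HM$ with $M$ a finitely generated $\pi_0 R$-module, viewed as an $A$-module via the $A_\infty$-ring map $A \to \tau_{\leq 0}A = H\pi_0 R$.

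Next I would check that $HM$ is a perfect $H\pi_0 R$-module. Since $\pi_0 R$ is regular noetherian, the finitely generated module $M$ has a finite resolution $0 \to P_d \to \cdots \to P_0 \to M \to 0$ by finitely generated projective $\pi_0 R$-modules; this exhibits $HM$ as a finite iterated cofiber of the $\Sigma^i HP_i$, each a retract of a finite free $H\pi_0 R$-module, so $HM$ is perfect over $H\pi_0 R$. (Equivalently, under the identification $\Mod_{H\pi_0 R} \simeq D(\pi_0 R)$ the object $HM$ is the complex $M$ in degree $0$, which is perfect.)

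Finally I would transport perfectness along the ring map $A \to H\pi_0 R$. Restriction of scalars $\Mod_{H\pi_0 R} \to \Mod_A$ has both adjoints, hence preserves all colimits and retracts, and it sends the unit $H\pi_0 R$ to $H\pi_0 R$ regarded as an $A$-module, which is perfect over $A$ precisely by hypothesis (3) of \cref{prop:regnoet}. Since every perfect $H\pi_0 R$-module is built from $H\pi_0 R$ by finitely many cofiber sequences, shifts and retracts, its image in $\Mod_A$ is perfect; applying this to $HM$ closes the induction.

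Every step here is fairly routine; the genuine content lies in the last paragraph, where assumption (3) is exactly what makes restriction of scalars along $A \to H\pi_0 R$ preserve perfectness, together with the regularity of $\pi_0 R$ which supplies the finite projective resolution in the middle step. If I had to name an obstacle, it would be the (easy but necessary) verification that $\tau_{\leq 0}A = H\pi_0 R$ is a genuine $A_\infty$-ring map, so that restriction of scalars is available, and that the $A$-module Postnikov filtration behaves as expected — both of which hold because $A$ is connective.
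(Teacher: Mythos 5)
Your proof is correct and follows essentially the same route as the paper: induct via the Postnikov filtration to reduce to $X = HM$ concentrated in one degree, use regularity and noetherianness of $\pi_0R$ to make $HM$ perfect over $H\pi_0R$, and then transfer perfectness along $\tau_{\geq 0}R \to H\pi_0R$ using hypothesis (3). The paper's proof is simply a terser version of these same steps, with the finite projective resolution and the restriction-of-scalars argument left implicit.
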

\begin{proof}
By induction, we can reduce to the case that $\pi_*X$ is concentrated in a single degree $n$. Then $X = H\pi_nX$ acquires the structure of a $H\pi_0R$-module and it is perfect as such because $\pi_0R$ is regular noetherian and $\pi_nX$ is finitely generated. As $H\pi_0R$ is perfect over $\tau_{\geq 0}R$, the same is thus true for $X$. 
\end{proof}

\begin{proof}[Proof of proposition]
	Let $M$ be a perfect $R$-module. As the truth of the conclusion of the proposition is clearly preserved under retracts in $M$ and also clear for $M=0$, we can assume by induction that we have a cofiber sequence
	\[\Sigma^lR \to N \to M \to \Sigma^{l+1}R\]
	where $\tau_{\geq k}N$ is a perfect $\tau_{\geq 0}R$-module for all $k\in\Z$. Taking $\tau_{\geq l}$ on the first two objects gives a diagram
\[\xymatrix{
\Sigma^l\tau_{\geq 0}R\ar[d] \ar[r] & \ar[d] \tau_{\geq l}N \ar[r]&\ar[d] M' \ar[r]&\Sigma^{l+1}\tau_{\geq 0}R\ar[d] \\
\Sigma^l R \ar[r] & N \ar[r] & M\ar[r] & \Sigma^{l+1}R. }\]
of cofiber sequences. As $\tau_{\geq l}N$ is a perfect $\tau_{\geq 0}R$-module, so is $M'$. Clearly, $\tau_{\geq l+1}M' \simeq \tau_{\geq l+1}M$. As the fiber of $\tau_{\geq l+1}M' \to M'$ fulfills the conditions of the previous lemma, $\tau_{\geq l+1}M$ is perfect as a $\tau_{\geq 0}R$-module. 

For a general $k\in\Z$, we make a case distinction: Assume first that $k\geq l+1$. Then the fiber of $\tau_{\geq k} M \to \tau_{\geq l+1}M$ is perfect by the previous lemma, hence $\tau_{\geq k} M$ is perfect as well. If $k\leq l+1$, consider the fiber of $\tau_{\geq l+1} M \to \tau_{\geq k}M$ instead.
\end{proof}

To apply \cref{prop:regnoet} to topological modular forms, we need the following lemma. 
\begin{lemma}
For every $n\geq 1$, the $\tmf[\frac1n]$-module $H\pi_0\tmf[\frac1n] = H\Z[\frac1n]$ is perfect. 
\end{lemma}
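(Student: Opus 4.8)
Here is the plan. The statement I want is that $H\pi_0\tmf[\frac1n]=H\Z[\frac1n]$ is a perfect $\tmf[\frac1n]$-module for every $n\geq 1$.

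First I would reduce to $n=1$. Localisation is flat, so $H\Z[\frac1n]\simeq H\Z\otimes_{\tmf}\tmf[\frac1n]$, and extension of scalars along a map of $E_\infty$-rings carries perfect (equivalently compact) modules to perfect modules; hence it suffices to show that $H\Z$ is a perfect $\tmf$-module. Since $\pi_0\tmf=\Z$ is noetherian and each $\pi_n\tmf$ is finitely generated over it, the module $H\Z$---whose homotopy is $\Z$ in degree $0$---is almost perfect over $\tmf$, so by \cite[Section 7.2.4]{HA} it is perfect as soon as it has finite Tor-amplitude over $\tmf$. Because $\pi_0\tmf=\Z$ is a Dedekind domain, this can be tested on residue fields, so it is enough to prove that $H\Q$ is a perfect $\tmf_\Q$-module and that $H\Z_{(p)}$ is a perfect $\tmf_{(p)}$-module for every prime $p$, with a bound on the Tor-amplitudes that is uniform in $p$.

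For $p\geq 5$ and rationally this is immediate: $\pi_*\tmf_{(p)}=\Z_{(p)}[c_4,c_6]$ and $\pi_*\tmf_\Q=\Q[c_4,c_6]$ are polynomial, so $H\Z_{(p)}$ and $H\Q$ are the Koszul quotients by the regular sequence $(c_4,c_6)$---finite cell modules of Tor-amplitude $2$. For $p=2$ I would use the equivalence of $\tmf_{(2)}$-modules $\tmf_{(2)}\otimes A_1\simeq\tmf_1(3)_{(2)}$, where $A_1$ is a finite spectrum with $H^*(A_1;\F_2)\cong A(1)$ over the Steenrod algebra (due to Mathew, building on Hopkins--Mahowald; here $\tmf_1(3)_{(2)}$ is a form of $BP\langle 2\rangle$). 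Since $A_1$ is perfect over $\S$, this exhibits $\tmf_1(3)_{(2)}$ as a perfect $\tmf_{(2)}$-module of bounded Tor-amplitude. Now $\tmf\to\tmf_1(3)$ is a map of $E_\infty$-rings and $\pi_*\tmf_1(3)_{(2)}=\Z_{(2)}[a_1,a_3]$ is regular, so $H\Z_{(2)}$ is the Koszul quotient $\tmf_1(3)_{(2)}/(a_1,a_3)$ and hence a perfect $\tmf_1(3)_{(2)}$-module; restriction of scalars along $\tmf_{(2)}\to\tmf_1(3)_{(2)}$ then carries it to a perfect $\tmf_{(2)}$-module, because the target is a perfect module over the source. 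For $p=3$ one argues in the same way, using a congruence subgroup of level prime to $3$ (for instance $\Gamma_0(2)$, for which $\pi_*\tmf_0(2)_{(3)}=\Z_{(3)}[\delta,\epsilon]$ is regular and $\tmf\to\tmf_0(2)$ is an $E_\infty$-ring map) together with the corresponding identification of $\tmf_{(3)}$ smashed with a finite complex with a form of $BP\langle 2\rangle$, which makes $\tmf_0(2)_{(3)}$ a perfect $\tmf_{(3)}$-module.

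The crux---and the step I expect to be the main obstacle---is precisely this $2$- and $3$-local input: at each of $p=2,3$ one needs an $E_\infty$-ring map $\tmf_{(p)}\to B_p$ with $\pi_*B_p$ regular noetherian, together with an equivalence $B_p\simeq\tmf_{(p)}\otimes Z_p$ of $\tmf_{(p)}$-modules for a finite spectrum $Z_p$. These should be quoted as facts about $\tmf$ itself---as in the work of Mathew, of Bruner--Rognes, or the DFHH volume---rather than as instances of the perfectness statements for $\tmf_1(3)$ or $\tmf_0(2)$ that are proved later in the paper, so that no circularity is introduced; granting them, all the remaining steps are formal. Alternatively, one may simply invoke the known fact that $H\Z$ is a perfect $\tmf$-module and deduce the lemma by base change.
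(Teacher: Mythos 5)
Your proposal is correct in substance and shares the paper's essential local input, but it assembles the local statements differently, so let me compare. Both arguments rest on the Hopkins--Mahowald--Mathew equivalences identifying $\tmf$ smashed with an explicit finite complex with a form of $BP\langle 2\rangle$ having polynomial homotopy ($\tmf_1(3)$, respectively $\tmf_1(2)=\tmf_0(2)$), followed by killing the polynomial generators and restricting scalars along $\tmf \to \tmf_1(3)$ (resp.\ $\tmf\to\tmf_1(2)$), exactly as in your crux step. The difference is in the globalization: the paper proves the statement for each $n$ at once by first treating the cases where $2$ or $3$ is invertible (via \cite[Theorems 4.13 and 4.10]{Mathom}) and then deducing the general case by a purely formal arithmetic-fracture argument --- compactness is tested by mapping $H\Z[\frac1n]$ into arbitrary direct sums $\bigoplus_i X_i$, and the cofiber sequence $\Sigma^{-1}X[\frac16]\to X \to X[\frac12]\oplus X[\frac13]\to X[\frac16]$ reduces this to the cases already handled. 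You instead reduce to $n=1$ by base change and then invoke the almost-perfect/finite-Tor-amplitude criterion of \cite{HA}, checking Tor-amplitude on residue fields with a uniform bound; this works, but it requires the extra (easy) rational and $p\geq 5$ cases via $\Z_{(p)}[c_4,c_6]$, and it leans on the residue-field detection of Tor-amplitude for almost perfect modules over a noetherian base, which you assert (the relevant point is noetherianness rather than $\Z$ being Dedekind) and would need to reference or prove; the paper's fracture argument avoids this machinery and the uniformity issue entirely. Two small corrections: at $p=2$ the finite complex is not the spectrum $A_1$ with $H^*(A_1;\F_2)\cong A(1)$, but the $8$-cell ``double'' $DA(1)$ whose $\F_2$-cohomology is $\mathcal{A}(2)/\!/E(Q_0,Q_1,Q_2)$ --- since you defer to the literature for the precise statement this does not damage the argument, but the identification as stated is wrong. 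Also, your worry about circularity is unnecessary: the later perfectness results of the paper depend on this lemma and not conversely, and indeed the paper's own proof of this lemma uses precisely the cited equivalences of Mathew together with the known homotopy rings $\Z[\frac1n][b_2,b_4]$ and $\Z[\frac1n][a_1,a_3]$, just as you propose.
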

\begin{proof}
If $2|n$, there is a $3$-cell complex $X$ such that $\tmf[\frac1n] \tensor X \simeq \tmf_1(2)[\frac1n]$ (see \cite[Theorem 4.13]{Mathom}). We have $\pi_*\tmf_1(2)[\frac1n] = \Z[\frac1n][b_2,b_4]$. Killing $b_2$ and $b_4$ gives $H\Z[\frac1n]$. Thus, $H\Z[\frac1n]$ is a perfect $\tmf_1(2)[\frac1n]$-module and hence also a perfect $\tmf[\frac1n]$-module.

If $3|n$, there is an $8$-cell complex $X$ such that $\tmf[\frac1n] \tensor X \simeq \tmf_1(3)[\frac1n]$ (see \cite[Theorem 4.10]{Mathom}). We have $\pi_*\tmf_1(3)[\frac1n] = \Z[\frac1n][a_1,a_3]$. Killing $a_1$ and $a_3$ gives $H\Z[\frac1n]$ and thus $H\Z[\frac1n]$ is also a perfect $\tmf[\frac1n]$-module in this case. 

For the general case, let $X_i$ be a collection of $\tmf[\frac1n]$-modules. Consider 
\[\Phi_k\colon \bigoplus_i \Hom_{\tmf[\frac1n]}\left(H\Z[\frac1n], X_i[\frac1k]\right) \to \Hom_{\tmf[\frac1n]}\left(H\Z[\frac1n], \bigoplus_i
 X_i[\frac1k]\right).\] 
If $k=2,3$ or $6$, then $\Phi_k$ is an equivalence by the previous results. As for every spectrum $X$, there is a cofiber sequence
\[\Sigma^{-1}X[\frac16] \to X \to X[\frac12]\oplus X[\frac13] \to X[\frac16] \]
there is a cofiber sequence of maps between mapping spectra 
\[\Sigma^{-1}\fib(\Phi_6) \to \fib(\Phi_1) \to \fib(\Phi_2\oplus \Phi_3) \to \fib(\Phi_6).\]
It follows that $\Phi_1$ is an equivalence as well and that $H\Z[\frac1n]$ is a perfect $\tmf[\frac1n]$-module. 
\end{proof}

\begin{thm}\label{thm:tmfperfect}
Let $\Gamma$ be a congruence subgroup of level $n$, which is tame or has a subgroup $\Gamma' \subset \Gamma$ of index $2$ with $\Gamma'$ tame. Then $\tmf(\Gamma)$ is a perfect  $\tmf[\frac1n]$-module. 

The same conclusion holds without the tameness hypothesis for any $\tmf[\frac1n]$-module $R$ with a map $R \to \tau_{\geq 0} \Tmf(\Gamma)$ whose fiber has finitely generated homotopy groups over $\Z[\frac1n]$, concentrated in finitely many degrees. 
\end{thm}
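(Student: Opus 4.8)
The plan is to apply \cref{prop:regnoet} with $R = \Tmf(\Gamma)[\frac1n]$ and then to relate $\tau_{\geq 0}\Tmf(\Gamma)$-modules to $\tmf[\frac1n]$-modules via base change. First I would verify the three hypotheses of \cref{prop:regnoet} for $R = \Tmf(\Gamma)[\frac1n]$: by the previous lemmas $\pi_0\Tmf(\Gamma)[\frac1n]$ is either $\Z[\frac1n]$ or $\Z[\frac1n][\zeta_n]$, both regular noetherian; the $\pi_n\Tmf(\Gamma)[\frac1n]$ are finitely generated over $\pi_0$ because (via the descent spectral sequence, which is concentrated in lines $0$ and $1$ and has coherent cohomology on the noetherian stack $\MMb(\Gamma)$) the homotopy groups are finitely generated modules of modular forms; and $H\pi_0\Tmf(\Gamma)[\frac1n]$ is perfect over $\tau_{\geq 0}\Tmf(\Gamma)[\frac1n]$ since $\tau_{\geq 0}\Tmf(\Gamma)[\frac1n]$ is a $\tmf[\frac1n]$-algebra, $H\Z[\frac1n]$ is perfect over $\tmf[\frac1n]$ by the preceding lemma, and $\pi_0\Tmf(\Gamma)[\frac1n]$ is a finite étale — in particular perfect — $\Z[\frac1n]$-algebra, so $H\pi_0\Tmf(\Gamma)[\frac1n] \simeq H\Z[\frac1n] \tensor_{\tmf[\frac1n]} \tau_{\geq 0}\Tmf(\Gamma)[\frac1n]$ remains perfect after base change. (In the $\Gamma = \Gamma(n)$ case one uses that $\Z[\frac1n] \to \Z[\frac1n][\zeta_n]$ is finite étale.)

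Next, observe that $R = \Tmf(\Gamma)[\frac1n]$ is a perfect module over itself, so \cref{prop:regnoet} gives that $\tau_{\geq k}\Tmf(\Gamma)[\frac1n]$ is a perfect $\tau_{\geq 0}\Tmf(\Gamma)[\frac1n]$-module for every $k$; in particular $\tau_{\geq 0}\Tmf(\Gamma)[\frac1n]$ is. Since $\tau_{\geq 0}\Tmf(\Gamma)[\frac1n]$ is perfect over itself, to conclude that it is perfect over $\tmf[\frac1n]$ I need the transitivity statement: a perfect module over a $\tmf[\frac1n]$-algebra $A$ that is itself perfect over $\tmf[\frac1n]$ is perfect over $\tmf[\frac1n]$. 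This holds because the perfect $A$-modules are generated under finite colimits and retracts by $A$, and $A$ is perfect over $\tmf[\frac1n]$, so $\tau_{\geq 0}\Tmf(\Gamma)[\frac1n]$ is a perfect $\tmf[\frac1n]$-module. Now for tame $\Gamma$, by \cref{thm:tmfGamma} and \cref{prop:killingpi1} the spectrum $\tmf(\Gamma)[\frac1n]$ sits in a homotopy pullback square \eqref{eq:square} with $\tau_{\geq 0}\Tmf(\Gamma)[\frac1n]$, $H\pi_0\Tmf(\Gamma)[\frac1n]$ and $\tau_{\leq 1}\Tmf(\Gamma)[\frac1n]$ at the other three corners; each of these is a perfect $\tmf[\frac1n]$-module (the first two as just shown, the last by \cref{lem:finitely} since its homotopy is finitely generated over $\Z[\frac1n]$ in finitely many degrees, combined with perfectness of $H\Z[\frac1n]$), so $\tmf(\Gamma)[\frac1n]$, being a finite limit of perfect modules, is perfect. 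For $\Gamma$ non-tame but with a tame index-$2$ subgroup $\Gamma'$, \cref{constr:Gamma'} defines $\tmf(\Gamma) = \tmf(\Gamma')^{C_2}$; since $\Gamma'$ has level dividing $n$ (up to units that are invertible in $\Z[\frac1n]$), $\tmf(\Gamma')[\frac1n]$ is perfect over $\tmf[\frac1n]$ by the tame case, and the $C_2$-fixed points of a perfect $C_2$-equivariant module over a Borel-complete $C_2$-spectrum are built from the underlying and geometric pieces by finite limits — more directly, $\tmf(\Gamma')^{C_2}$ fits in a finite tower (or a pullback against the Tate construction) whose other terms are finite colimits of the underlying $\tmf(\Gamma')[\frac1n]$, hence perfect; alternatively one uses \cref{rem:fiber}, which exhibits $\tmf(\Gamma)$ as differing from $\tau_{\geq 0}\Tmf(\Gamma)$ only by homotopy groups finitely generated over $\Z[\frac1n]$ in finitely many degrees, reducing to the next paragraph.

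For the final sentence of the theorem — dropping tameness and working with an arbitrary $\tmf[\frac1n]$-module $R$ with a map $R \to \tau_{\geq 0}\Tmf(\Gamma)$ whose fiber $F$ has finitely generated homotopy over $\Z[\frac1n]$ concentrated in finitely many degrees — I would argue directly from the cofiber sequence $F \to R \to \tau_{\geq 0}\Tmf(\Gamma)$ of $\tmf[\frac1n]$-modules. Here $\tau_{\geq 0}\Tmf(\Gamma)$ is perfect over $\tmf[\frac1n]$ by the argument above (which did not use tameness: the only place tameness entered \cref{prop:killingpi1} was the construction of $\tmf(\Gamma)$ itself, not the perfectness of $\tau_{\geq 0}\Tmf(\Gamma)$), and $F$ is perfect over $\tmf[\frac1n]$ by \cref{lem:finitely} applied with the $\tmf[\frac1n]$-algebra $\tau_{\geq 0}\Tmf(\Gamma)$ — wait, more carefully: $F$ is a $\tmf[\frac1n]$-module with finitely many nontrivial homotopy groups finitely generated over $\Z[\frac1n] = \pi_0\tmf[\frac1n]$, and $\pi_0\tmf[\frac1n] = \Z[\frac1n]$ is regular noetherian while $H\Z[\frac1n]$ is perfect over $\tmf[\frac1n]$, so \cref{lem:finitely} (with $R = \tmf[\frac1n]$) directly gives that $F$ is perfect over $\tmf[\frac1n]$. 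Since perfect modules are closed under finite colimits, $R$ is perfect over $\tmf[\frac1n]$.

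I expect the main obstacle to be the verification that $H\pi_0\Tmf(\Gamma)[\frac1n]$ is perfect over $\tau_{\geq 0}\Tmf(\Gamma)[\frac1n]$ in the case $\Gamma = \Gamma(n)$, where one must handle the ring extension $\Z[\frac1n] \to \Z[\frac1n][\zeta_n]$; this reduces to knowing that a finite étale extension $A \to B$ makes $HB$ perfect over $HA$ (true since $B$ is a finitely generated projective $A$-module) and that perfectness is stable under the base change $\tmf[\frac1n] \to \tau_{\geq 0}\Tmf(\Gamma)[\frac1n]$, both of which are standard but deserve a careful sentence. A secondary subtlety is making sure the hypotheses of \cref{prop:regnoet} are literally about $R$ (non-connective) while the conclusion is about $\tau_{\geq 0}R$, and that the perfectness statements compose correctly along the chain $\tmf[\frac1n] \to \tau_{\geq 0}\Tmf(\Gamma)[\frac1n] \to H\pi_0\Tmf(\Gamma)[\frac1n]$; these are formal but worth spelling out.
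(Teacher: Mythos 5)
There is a genuine gap at the heart of your argument, and it is a circularity. You run \cref{prop:regnoet} with the ambient ring taken to be $\Tmf(\Gamma)[\frac1n]$ itself and the perfect module $M = \Tmf(\Gamma)[\frac1n]$; the conclusion you extract, that $\tau_{\geq 0}\Tmf(\Gamma)[\frac1n]$ is perfect over $\tau_{\geq 0}\Tmf(\Gamma)[\frac1n]$, is vacuous (every ring is perfect over itself). To pass from there to perfectness over $\tmf[\frac1n]$ you invoke transitivity for a $\tmf[\frac1n]$-algebra $A$ ``that is itself perfect over $\tmf[\frac1n]$'' --- but here $A = \tau_{\geq 0}\Tmf(\Gamma)[\frac1n]$, and its perfectness over $\tmf[\frac1n]$ is exactly the statement being proved; nothing in your argument establishes it. A second problem is your verification of hypothesis (3) of \cref{prop:regnoet} for $\Tmf(\Gamma)[\frac1n]$: the claimed equivalence $H\pi_0\Tmf(\Gamma)[\frac1n] \simeq H\Z[\frac1n] \tensor_{\tmf[\frac1n]} \tau_{\geq 0}\Tmf(\Gamma)[\frac1n]$ is false in general. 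The K\"unneth spectral sequence $\Tor^{\pi_*\tmf[\frac1n]}_*\bigl(\Z[\frac1n], \pi_*\tau_{\geq 0}\Tmf(\Gamma)[\frac1n]\bigr)$ is not concentrated in degree $0$; already the $\Tor_0$-term is the quotient of the ring of modular forms by the image of the positive-degree elements of $\pi_*\tmf$, which is nonzero in positive degrees (e.g.\ for $\Gamma_1(2)$ one gets $\Z[\frac12][b_2,b_4]/(c_4,c_6,\dots)$). So even hypothesis (3) for your choice of ambient ring is unjustified.

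The missing ingredient is the external finiteness input that $\Tmf(\Gamma)$ is a \emph{perfect $\Tmf[\frac1n]$-module} (\cite[Proposition 2.12]{MeiTopLevel}); without some statement of this kind, relating $\Tmf(\Gamma)$ to $\Tmf$, the theorem cannot follow formally. The correct instantiation of \cref{prop:regnoet} is with ambient ring $R = \Tmf[\frac1n]$ and $M = \Tmf(\Gamma)$: the hypotheses are then that $\Z[\frac1n]$ is regular noetherian, that the $\pi_k\Tmf[\frac1n]$ are finitely generated over $\Z[\frac1n]$, and that $H\Z[\frac1n]$ is perfect over $\tmf[\frac1n]$ (the preceding lemma), and the conclusion is directly that $\tau_{\geq 0}\Tmf(\Gamma)$ is a perfect $\tmf[\frac1n]$-module. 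Once that is in place, the rest of what you wrote is essentially correct and agrees with the intended argument: the fiber of $\tmf(\Gamma) \to \tau_{\geq 0}\Tmf(\Gamma)$ is $\Sigma^{-1}H\pi_1\Tmf(\Gamma)$ in the tame case, respectively is controlled by $\Sigma^{\sigma}HM$ via \cref{rem:fiber} in the index-$2$ case (your detour through $C_2$-fixed points and Borel completeness is unnecessary), these have finitely generated homotopy in finitely many degrees by properness of $\MMb(\Gamma)$, so \cref{lem:finitely} applies, and closure of perfect modules under (co)fibers yields both the first part and the final statement.
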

\begin{proof}
According to \cite[Proposition 2.12]{MeiTopLevel} the $\Tmf[\frac1n]$-module $\Tmf(\Gamma)$ is perfect. All $\pi_k\Tmf[\frac1n]$ are finitely generated $\Z[\frac1n]$-modules. Furthermore, $H\pi_0\tmf[\frac1n] = H\Z[\frac1n]$ is a perfect $\tmf[\frac1n]$-module by the previous lemma. This implies that $\tau_{\geq 0}\Tmf(\Gamma)$ is a perfect $\tmf[\frac1n]$-module by \cref{prop:regnoet}. 

For any $R$ as in the statement of the theorem, $R$ is thus perfect as well by \cref{lem:finitely}. To see that $\tmf(\Gamma)$ satisfies the hypotheses on $R$, note first that every $H^s(\MMb(\Gamma);\omega^{\tensor t})$ is a finitely generated $\Z[\frac1n]$-module for every $s$ and $t$ since $\MMb(\Gamma)$ is proper over $\Z[\frac1n]$. 
If $\Gamma$ is tame, the cofiber of $\tmf(\Gamma) \to \tau_{\geq 0}\Tmf(\Gamma)$ is by construction $H\pi_1\Tmf(\Gamma)$ and $\pi_1\Tmf(\Gamma)\cong H^1(\MMb(\Gamma); \omega)$. If  there is a tame subgroup $\Gamma'\subset \Gamma$ of index $2$, the cofiber $\tmf(\Gamma) \to \tau_{\geq 0}\Tmf(\Gamma)$ agrees with $\Sigma^{\sigma}HM$ for $M$ the constant Mackey functor on $H^1(\MMb(\Gamma'); \omega)$ by \cref{rem:fiber}. The exact sequence given in the same remark implies that the homotopy groups of $\Sigma^{\sigma}HM$ are concentrated in degrees $0$ and $1$ and are finitely generated $\Z[\frac1n]$-modules. 
\end{proof}

We recall from \cite{M-R99} that a connective $p$-complete spectrum $X$ is called an \emph{fp-spectrum} if $H_*(X;\F_p)$ is finitely presented as a comodule over the dual Steenrod algebra. They show in \cite[Proposition 3.2]{M-R99} that equivalently there is a finite spectrum $F$ with non-trivial $\F_p$-homology such that the total group $\pi_*(X\tensor F)$ is finite. The following proposition can be deduced from the known $\F_p$-(co)homology of $\tmf$ (see e.g.\ \cite[Section 21]{Rezk512}) and was already noted in \cite{M-R99} for $p=2$. We prefer to give a less computational proof though. 

\begin{prop}
	The $p$-completion of $\tmf$ is an fp-spectrum for all primes $p$. 
\end{prop}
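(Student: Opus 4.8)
The plan is to verify the criterion of \cite[Proposition 3.2]{M-R99} recalled above: I will produce a finite spectrum $V$ with $H_*(V;\F_p)\neq 0$ for which $\pi_*(\tmf_p^\wedge\otimes V)$ is finite. For $V$ I take a finite complex of type $3$ — for instance a generalized Smith--Toda complex $S/(p^{i_0},v_1^{i_1},v_2^{i_2})$ for suitably large exponents, whose existence at every prime is a consequence of the periodicity theorem of Hopkins--Smith. By construction $H_*(V;\F_p)\neq 0$, so the whole content is the finiteness of $\pi_*(\tmf_p^\wedge\otimes V)$; I may also assume $V$ connective.

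The first step is to show $\Tmf\otimes V=0$. Since $V$ is finite, smashing with $V$ commutes with the homotopy limit computing global sections, so it is enough to prove $\OO^{top}(U)\otimes V=0$ for every affine $U$ \'etale over $\MMb_{ell}$. Each such $\OO^{top}(U)$ is an even periodic Landweber-exact spectrum whose associated formal group is the formal group of a generalized elliptic curve, hence has height at most $2$ at every point of its spectrum; equivalently $(p,v_1,v_2)$ is the unit ideal of $\pi_{2*}\OO^{top}(U)$. On the other hand $MU_*V$ is a finitely generated $MU_*$-module annihilated by a power of $(p,v_1,v_2)$, because $V$ has type $\geq 3$. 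By Landweber exactness $\OO^{top}(U)_*V\cong\OO^{top}(U)_*\otimes_{MU_*}MU_*V$, and this vanishes since $(p,v_1,v_2)$ acts invertibly on the first factor and nilpotently on the second. (Equivalently: $\Tmf$ is a limit of $E(2)$-local spectra, hence $E(2)$-local, and $V$ is $E(2)$-acyclic.) Consequently $\Tmf\otimes V=0$, and therefore also $\Tmf_p^\wedge\otimes V=0$.

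Now recall $\tmf=\tau_{\geq 0}\Tmf$. Smashing the cofiber sequence $\tmf\to\Tmf\to\tau_{\leq -1}\Tmf$ with $V$ and using the previous step gives
\[\tmf\otimes V\;\simeq\;\Sigma^{-1}\bigl(\tau_{\leq -1}\Tmf\otimes V\bigr).\]
The spectrum $\tau_{\leq -1}\Tmf$ is bounded above (its homotopy lives in degrees $\leq -1$) and $V$ is a finite complex, so the right-hand side is bounded above; on the other hand $\tmf\otimes V$ is connective. Hence $\tmf\otimes V$, and therefore $\tmf_p^\wedge\otimes V\simeq(\tmf\otimes V)_p^\wedge$, has homotopy concentrated in finitely many degrees. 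Moreover each $\pi_n\tmf$ is a finitely generated abelian group, so the Atiyah--Hirzebruch spectral sequence for $\tmf\otimes V$ shows each $\pi_n(\tmf\otimes V)$ is finitely generated; since $V$ is rationally acyclic these groups are in fact finite. Thus $\pi_*(\tmf_p^\wedge\otimes V)$ is finite, and \cite[Proposition 3.2]{M-R99} applies.

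The step that needs the most care is the vanishing $\Tmf\otimes V=0$: this is where one feeds in the geometric fact that the only heights occurring in the sheaf $\OO^{top}$ are $\leq 2$ (an elliptic curve has ordinary or supersingular reduction, and the Tate curve is multiplicative, so nothing worse appears), and one must be a little careful that the argument runs sheaf-locally and is compatible with $p$-completion. Everything after that — the manipulation with $\tau_{\leq -1}$, the boundedness above and below, and the degreewise finite generation of $\pi_*\tmf$ — is routine.
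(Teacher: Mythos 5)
Your argument is correct, but it takes a genuinely different route from the one in the paper. The paper first uses Mathew's splittings (\cite[Theorems 4.10, 4.13]{Mathom}) to replace $\tmf$ by $\tmf\tensor W\simeq \tmf_1(3)$ (resp.\ $\tmf_1(2)$ at $p=3$), and then exploits that $\pi_*\tmf_1(3)=\Z_{(p)}[a_1,a_3]$ is an explicit integral domain on which the sequence $p^{k_0},v_1^{k_1},v_2^{k_2}$ is regular, so that smashing with the same generalized Smith--Toda complex $V$ yields the finite ring $\pi_*\tmf_1(3)/(p^{k_0},a_1^{k_1},a_3^{k_2})$. You instead kill $\Tmf\tensor V$ outright using that the sections of $\OO^{top}$ over affines (log-)\'etale over $\MMb_{ell}$ are Landweber exact of height at most $2$ (equivalently, that $\Tmf$ is $E(2)$-local), and then use that $\tmf=\tau_{\geq 0}\Tmf$ differs from $\Tmf$ by the coconnective piece $\tau_{\leq -1}\Tmf$, so that $\tmf\tensor V$ is simultaneously connective and bounded above, with degreewise finitely generated torsion homotopy, hence finite. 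The steps are all sound: the vanishing $\OO^{top}(U)_*V\cong\OO^{top}(U)_*\tensor_{MU_*}MU_*V=0$ does follow from the ideal argument once one knows the flatness of $\MMb_{ell}\to\MM_{FG}$ (this includes the cusps, where the relevant sections are Tate $K$-theories; this fact is standard but is an input the paper never needs, so you should cite it explicitly), smashing with the dualizable $V$ commutes with the descent limit, and the finiteness bookkeeping at the end is routine. What each approach buys: the paper's proof stays entirely within results it already uses elsewhere (the splittings of \cite{Mathom}) and produces an explicit finite homotopy ring; your proof avoids those splittings altogether and, as a bonus, applies verbatim to $\tau_{\geq 0}\Tmf(\Gamma)$ and to $\tmf(\Gamma)$ (whose fiber over $\tau_{\geq 0}\Tmf(\Gamma)$ is bounded with finitely generated homotopy), so it would give \cref{cor:fp} directly without routing through the perfectness result \cref{thm:tmfperfect}.
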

\begin{proof}
	We implicitly $p$-localize. For $p\neq 3$, \cite[Theorem 4.10]{Mathom} implies the existence of a finite spectrum $W$ with non-trivial $\F_p$-homology such that $\tmf \tensor W \simeq \tmf_1(3)$. Choose a complex $V$ such that $BP_*V \cong BP_*/(p^{k_0}, v_1^{k_1}, v_2^{k_2})$ with $k_0, k_1$ and $k_2$ positive integers. As $\TMF_1(3)$ is Landweber exact, the sequence $p, v_1, v_2$ and hence the sequence $p^{k_0}, v_1^{k_1}, v_2^{k_2}$ is regular on $\pi_*\TMF_1(3)$. Since $\pi_*\tmf_1(3) = \Z_{(p)}[a_1, a_3]$ is an integral domain, the sequence is also regular on $\pi_*\tmf_1(3)$. Thus, 
	\[\pi_*\tmf\tensor W\tensor V \cong \pi_*\tmf_1(3)\tensor V \cong \pi_*\tmf_1(3)/(p^{k_0}, a_1^{k_1}, a_3^{k_2})\]
	 is a finitely generated $\Z/p^{k_0}$-algebra and of Krull dimension $0$. Hence it is of finite length as a $\Z/p^{k_0}$-module and thus finite. 
	 
	 Essentially the same argument works for $p=3$ if we choose instead a complex $W'$ with $\tmf \tensor W' \simeq \tmf_1(2)$ as in \cite[Theorem 4.13]{Mathom}. 
\end{proof}

\begin{cor}\label{cor:fp}
	The $p$-completion of $\tmf(\Gamma)$ for a congruence subgroup $\Gamma$ of level $n$ and $p$ not dividing $n$ is an fp-spectrum.
\end{cor}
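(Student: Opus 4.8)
The plan is to deduce \cref{cor:fp} from the preceding proposition together with the perfectness statement of \cref{thm:tmfperfect}. The key observation is that the class of fp-spectra is closed under the operation of taking a perfect module: if $X$ is a connective $p$-complete fp-spectrum and $M$ is a perfect $X$-module that is moreover connective (or at least bounded below), then $M$ is again an fp-spectrum. Indeed, by \cite[Proposition 3.2]{M-R99} there is a finite spectrum $F$ with $H_*(X;\F_p)\neq 0$ such that $\pi_*(X\tensor F)$ is finite; then $M\tensor F$ is a perfect $X\tensor F$-module over a ring spectrum with finite homotopy, hence has finite homotopy itself, so $M$ is an fp-spectrum. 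So the first step is to record this closure property, which is essentially a formal consequence of the definition once one knows perfect modules over a ring with finite homotopy have finite homotopy (built from $X\tensor F$ by finitely many cofiber sequences and retracts).

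Next I would apply this with $X$ the $p$-completion of $\tmf$, which is an fp-spectrum by the previous proposition, and with $M$ the $p$-completion of $\tmf(\Gamma)$. By \cref{thm:tmfperfect}, $\tmf(\Gamma)$ is a perfect $\tmf[\tfrac1n]$-module whenever $p\nmid n$; since $p$ is invertible in $\Z[\tfrac1n]$, after $p$-completion $\tmf[\tfrac1n]^{\wedge}_p\simeq \tmf^{\wedge}_p$, so $\tmf(\Gamma)^{\wedge}_p$ is a perfect module over $\tmf^{\wedge}_p$. (One should note that $p$-completion is symmetric monoidal on the relevant subcategory and preserves the property of being a perfect module, since it is a finite colimit construction.) The spectrum $\tmf(\Gamma)$ is connective by construction, hence so is its $p$-completion. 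Applying the closure property from the first step then gives that $\tmf(\Gamma)^{\wedge}_p$ is an fp-spectrum, which is exactly the assertion of the corollary.

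The main obstacle is a bookkeeping one rather than a conceptual one: one must make sure that ``perfect $\tmf[\tfrac1n]$-module'' really transports to ``perfect $\tmf^{\wedge}_p$-module'' after $p$-completion, and that the finite spectrum $F$ witnessing the fp-property of $\tmf^{\wedge}_p$ can be used simultaneously. For the latter, it is cleanest to observe that $M$ being a perfect $X$-module means $M$ lies in the thick subcategory generated by $X$ in $X$-modules, so $M\tensor_X(X\tensor F)=M\tensor F$ lies in the thick subcategory generated by $X\tensor F$ in $(X\tensor F)$-modules; since $\pi_*(X\tensor F)$ is finite, every such perfect module has finite total homotopy, as finiteness of homotopy groups is preserved by cofiber sequences, suspensions and retracts. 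Because $F$ has non-trivial $\F_p$-homology and $M\tensor F$ has finite homotopy, $M$ is an fp-spectrum by \cite[Proposition 3.2]{M-R99}. The only point needing minor care is connectivity: $\tmf(\Gamma)$ is connective and $F$ may be chosen with cells in a bounded range, so $M\tensor F$ is bounded below and its homotopy, being finitely generated in each degree and nonzero in only finitely many degrees, is genuinely finite.
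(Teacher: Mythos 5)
Your proposal is correct and follows the deduction the paper intends: combine \cref{thm:tmfperfect} (perfectness of $\tmf(\Gamma)$ over $\tmf[\frac1n]$, hence of its $p$-completion over $\tmf^{\wedge}_p$) with the preceding proposition that $\tmf^{\wedge}_p$ is fp, using that the fp-property passes to perfect modules via \cite[Proposition 3.2]{M-R99}. Only cosmetic points: $X\tensor F$ is not a ring (so speak of the thick subcategory it generates in $X$-modules or in spectra, not of ``$(X\tensor F)$-modules''), and perfectness transports along $p$-completion because for a perfect, hence dualizable, module base change along $\tmf[\frac1n]\to\tmf[\frac1n]^{\wedge}_p\simeq\tmf^{\wedge}_p$ computes the completion (it is a limit, not a finite colimit) --- neither point affects the argument.
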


For implications involving duality we refer to \cite{M-R99} and for an implication for the Hurewicz image in $H_*(\Omega^\infty\tmf(\Gamma); \F_p)$ to \cite[Theorem 1.7]{KuhnHurewicz}. 

\subsection{All $\tmf(\Gamma)_{\Q}$ are formal}
The goal of this section is to show that the $E_{\infty}$-rings $\tmf(\Gamma)_{\Q}$ are formal. While this statement is interesting in its own right, we also need it for further pursuing compactness questions in the following subsection. We begin with the following consequence of Goerss--Hopkins obstruction theory. 
\begin{prop}\label{prop:smooth}
	Let $A$ and $B$ be $E_{\infty}$-$H\Q$-algebras such that $\pi_*A$ is smooth as a $\Q$-algebra. Then 
	\[\pi_i\Map_{\CAlg}(A,B) \cong \begin{cases}
	\Hom_{\mathrm{grCRings}}(\pi_*A, \pi_*B) & \text{ if } i = 0 \\
	\Hom_{\pi_*A}(\Omega^1_{\pi_*A/\Q}, \pi_{*+i}B) & \text{ if }i>0,
	\end{cases}\]
	where for $\pi_i$ with $i>0$ a base point is chosen if a map $A \to B$ exists.
\end{prop}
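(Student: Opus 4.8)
The plan is to apply Goerss--Hopkins obstruction theory for $E_\infty$-$H\Q$-algebras and to exploit that, since $\pi_*A$ is smooth, the relevant André--Quillen cohomology is concentrated in a single degree. Recall that for $E_\infty$-$H\Q$-algebras $A$ and $B$ there is a spectral sequence
\[E_2^{s,t} = D^s_{\Q}(\pi_*A; \Sigma^t\pi_*B) \Longrightarrow \pi_{t-s}\Map_{\CAlg}(A,B),\]
convergent once a basepoint is chosen, where $D^s_{\Q}(\pi_*A;-)$ is the $s$-th André--Quillen cohomology group of the graded-commutative $\Q$-algebra $\pi_*A$ with coefficients in a graded $\pi_*A$-module and $\Sigma^t$ is the internal degree shift; over $\Q$ this is the classical André--Quillen cohomology, since the $E_\infty$-operad is formal. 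Moreover the same theory says that a graded ring homomorphism $\pi_*A\to\pi_*B$ lifts to a point of $\Map_{\CAlg}(A,B)$ provided a sequence of successive obstructions lying in the groups $D^s_{\Q}(\pi_*A;\Sigma^t\pi_*B)$ with $s\geq 2$ vanishes, and that the set of path components over a fixed such ring map is controlled by the groups with $s=1$.

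The algebraic input is then immediate. Smoothness of $\pi_*A$ over $\Q$ is equivalent to the cotangent complex $L_{\pi_*A/\Q}$ being equivalent to the module of Kähler differentials $\Omega^1_{\pi_*A/\Q}$ placed in homological degree $0$, with $\Omega^1_{\pi_*A/\Q}$ a finitely generated projective $\pi_*A$-module; I would add a sentence noting that neither the projectivity nor the concentration of $L$ is affected by whether one remembers the grading. Hence for every graded $\pi_*A$-module $M$ one has
\[D^s_{\Q}(\pi_*A;M) = \Ext^s_{\pi_*A}\!\left(\Omega^1_{\pi_*A/\Q},M\right) = \begin{cases}\Hom_{\pi_*A}\!\left(\Omega^1_{\pi_*A/\Q},M\right), & s=0,\\[2pt] 0, & s\geq 1.\end{cases}\]

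Feeding this back into the obstruction theory: since every group with $s\geq 1$ vanishes, there is no obstruction to realising any graded ring homomorphism $\pi_*A\to\pi_*B$, and all realisations of a given one lie in a single path component, which gives the bijection $\pi_0\Map_{\CAlg}(A,B)\cong\Hom_{\mathrm{grCRings}}(\pi_*A,\pi_*B)$. Choosing a basepoint $\phi$ (when one exists), the spectral sequence is concentrated in the column $s=0$, hence degenerates, and for $i>0$ we obtain
\[\pi_i\Map_{\CAlg}(A,B)\cong E_2^{0,i} = D^0_{\Q}(\pi_*A;\Sigma^i\pi_*B) = \Hom_{\pi_*A}\!\left(\Omega^1_{\pi_*A/\Q},\pi_{*+i}B\right),\]
which is the claimed formula; the answer depends on $\phi$ only through the resulting $\pi_*A$-module structure on $\pi_*B$.

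The bulk of the work is bookkeeping rather than new mathematics. The main point to be careful about is invoking the Goerss--Hopkins apparatus in a form valid for possibly non-connective $E_\infty$-$H\Q$-algebras, and checking that vanishing of $D^{\geq 1}$ really does annihilate \emph{all} the successive obstructions --- both those governing existence of a realisation of a prescribed ring homomorphism and those governing which path component it lands in --- so that $\pi_0$ is computed on the nose. A secondary point is the identification of the $E_2$-term with the \emph{algebraic} André--Quillen cohomology of $\pi_*A$, equivalently that the topological cotangent complex of $A$ over $H\Q$ has homotopy $\Omega^1_{\pi_*A/\Q}$ concentrated in a single internal degree; this again follows from smoothness of $\pi_*A$ via the algebraic-to-topological cotangent complex spectral sequence, so in particular one does not need to know in advance that $A$ is formal.
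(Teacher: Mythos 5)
Your proposal is correct and follows essentially the same route as the paper: Goerss--Hopkins obstruction theory with $E = H\Q$, the identification of the relevant ($E_\infty$/Andr\'e--Quillen) cotangent complex with $\Omega^1_{\pi_*A/\Q}$ using rationality and smoothness, projectivity killing all higher Ext/obstruction groups, and collapse of the resulting spectral sequence onto the $s=0$ column. The only difference is cosmetic (you phrase the $E_2$-term via algebraic Andr\'e--Quillen cohomology and formality of the $E_\infty$-operad over $\Q$, the paper via agreement of the various cotangent complexes rationally), and your cautionary remarks about non-connectivity and identification of the $E_2$-term are handled at the same level of detail as in the paper itself.
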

\begin{proof}
	According to \cite[Section 4]{G-H04} or \cite[Section 6]{PPvK} with $E= H\Q$, there is an obstruction theory for lifting a morphism $\pi_*A \to \pi_*B$ to a morphism $A\to B$, where the obstructions lie in $\Ext^{n+1,n}_{\pi_*A}(\mathbb{L}^{E_{\infty}}_{\pi_*A/\Q}, \pi_*B)$, where $\mathbb{L}^{E_{\infty}}$ denotes the $E_{\infty}$-cotangent complex. As we are working rationally, this coincides with other forms of the cotangent complexes. In particular, we obtain from the smoothness of $\pi_*A$ that $\mathbb{L}^{E_{\infty}}_{\pi_*A/\Q}$ is isomorphic to $\Omega^1_{\pi_*A/\Q}$ concentrated in degree $0$, which again by smoothness is a projective $\pi_*A$-module. Thus the Ext-groups vanish and there is no obstruction to lifting a morphism $\pi_*A \to \pi_*B$ to a morphism $A\to B$. The same sources provide a spectral sequence computing $\pi_*\Map_{\CAlg}(A,B)$, which collapses by a similar Ext-calculation and gives the result. 
\end{proof}

\begin{prop}
	Let $\XX$ be a smooth Deligne--Mumford stacks over $\Q$ and $\OO$ an even-periodic sheaf of $E_{\infty}$-ring spectra on $\XX$ such that $\pi_0\OO\cong \OO_{\XX}$ and the $\pi_i\OO_{\XX}$ are quasi-coherent. Assume further that $H^{i+1}(\XX; \pi_i\OO) = 0$ for all even $i\geq 1$. Then $\OO$ is formal, i.e.\ equivalent to the (sheafification of the pre)sheaf $H\pi_*\OO$ of graded Eilenberg--MacLane spectra. 
\end{prop}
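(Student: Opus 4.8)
The plan is to produce a map $\phi\colon L(H\pi_*\OO)\to\OO$ of sheaves of $E_\infty$-ring spectra on $\XX$ inducing the identity on all homotopy sheaves, where $L$ denotes sheafification; any such $\phi$ is automatically an equivalence, since equivalences of sheaves are detected on homotopy sheaves, and this is exactly formality. As $\OO$ is already a sheaf, a $\phi$ is the same datum as a point, lying over the class of the identity in $\pi_0$, of the space
\[\Map_{\Shv_{\CAlg}(\XX)}\!\big(L(H\pi_*\OO),\,\OO\big)\;\simeq\;\holim_{U}\,\Map_{\CAlg}\!\big(H\pi_*\OO(U),\,\OO(U)\big),\]
the limit running over a basis of affine opens $U\to\XX$. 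Throughout one works over $H\Q$, which is legitimate because all homotopy sheaves $\pi_i\OO$ are $\Q$-modules.

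I would first treat the purely local question. Restrict to a basis $\mathcal B$ of affine opens $U=\Spec R$ over which the line bundle $\omega=\pi_2\OO$ is trivial; such a basis exists since $\XX$ is Deligne--Mumford and $\omega$ is a line bundle. Even-periodicity then gives $\pi_*\OO(U)\cong R[\beta^{\pm1}]$ with $|\beta|=2$, and this graded $\Q$-algebra is smooth because $R$ is (as $\XX$ is smooth over $\Q$); in particular $\mathbb{L}^{E_{\infty}}_{\pi_*\OO(U)/\Q}\simeq\Omega^1_{\pi_*\OO(U)/\Q}$ is a projective graded $\pi_*\OO(U)$-module concentrated in even internal degrees. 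Applying the graded variant of \cref{prop:smooth} with $A=H\pi_*\OO(U)$, $B=\OO(U)$ (the proof via Goerss--Hopkins obstruction theory \cite{G-H04,PPvK} is unchanged) yields that $\pi_0\Map_{\CAlg}(H\pi_*\OO(U),\OO(U))\cong\Hom_{\mathrm{grCRings}}(\pi_*\OO(U),\pi_*\OO(U))$, so the identity is realized and $\OO(U)$ is already formal, and that, based at this realization, $\pi_i\Map_{\CAlg}(H\pi_*\OO(U),\OO(U))\cong\Hom_{\pi_*\OO(U)}(\Omega^1_{\pi_*\OO(U)/\Q},\pi_{*+i}\OO(U))$ for $i>0$, which vanishes for $i$ odd since $\pi_{\mathrm{odd}}\OO=0$.

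Next I would globalize by the descent (Bousfield--Kan) spectral sequence $E_2^{s,t}=H^s(\XX;\underline\pi_t\mathcal F)\Rightarrow\pi_{t-s}$ of the homotopy limit above, where $\mathcal F$ is the presheaf of mapping spaces and $\underline\pi_t\mathcal F$ is the sheafification of $U\mapsto\pi_t\Map_{\CAlg}(H\pi_*\OO(U),\OO(U))$, based at the identity for $t\ge1$. By the local analysis $\underline\pi_t\mathcal F=0$ for $t$ odd; for $t\ge2$ even, applying $\mathcal Hom_{\pi_*\OO}(-,\pi_{*+t}\OO)$ to the locally split cotangent sequence of $\Q\to\OO_\XX\to\pi_*\OO$, together with the identification $\Omega^1_{\pi_*\OO/\OO_\XX}\cong\pi_*\OO$ via $d\log\beta$, presents $\underline\pi_t\mathcal F$ as an extension
\[0\longrightarrow\pi_t\OO\longrightarrow\underline\pi_t\mathcal F\longrightarrow T_{\XX/\Q}\otimes\pi_t\OO\longrightarrow0.\]
The identity is a global section of $\underline\pi_0\mathcal F$, and the obstructions to lifting it to a point of the homotopy limit are its images under the differentials $d_r$, so they lie in subquotients of $H^{i+1}(\XX;\underline\pi_i\mathcal F)$ for even $i\ge1$. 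The part of each such obstruction coming from the subsheaf $\pi_i\OO$ is killed by the hypothesis $H^{i+1}(\XX;\pi_i\OO)=0$.

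The most delicate point — and the one I expect to be the main obstacle — is the remaining summand $H^{i+1}(\XX;T_{\XX/\Q}\otimes\pi_i\OO)$: one must check that the obstruction to realizing the \emph{identity} has trivial image there. This should come from the fact that the quotient $\underline\pi_i\mathcal F\twoheadrightarrow T_{\XX/\Q}\otimes\pi_i\OO$ records precisely the part of the obstruction seen by the induced realization problem for the $\pi_0$-truncation $H\pi_*\OO\to H\OO_\XX$, which already carries the identity and is therefore unobstructed; making this compatibility precise (a map of descent spectral sequences, pinning down where the differential on $\mathrm{id}$ lands) is what needs care. Note that in the applications $\XX$ is a stacky curve, so $H^{\ge2}(\XX;-)$ vanishes on quasi-coherent sheaves, both the hypothesis and this last group hold automatically, and the argument collapses to the local statement together with descent in cohomological degrees $\le1$. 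Granting the above, $\phi$ exists, hence is an equivalence, so $\OO$ is formal.
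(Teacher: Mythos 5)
Your overall strategy is the same as the paper's: reduce formality to producing a multiplicative map inducing the identity on homotopy sheaves, produce it locally via rational Goerss--Hopkins obstruction theory (\cref{prop:smooth}, applied on affine charts where $\pi_*\OO(U)$ is smooth over $\Q$ and the mapping space has homotopy only in even degrees), and glue by the obstruction theory for the resulting (co)simplicial/limit diagram of mapping spaces, with obstructions in sheaf cohomology of the homotopy sheaves of the mapping-space presheaf. The paper runs the comparison map in the opposite direction ($\OO \to H\pi_*\OO$, which is immaterial), works with an \'etale hypercover by affines rather than ``affine opens'' (for a Deligne--Mumford stack you need affine \'etale charts, not Zariski opens), notes hypercompleteness of $\OO$ to justify computing the mapping space as a totalization, and cites \cite[Sections 5.2, 2.4]{Bou89} for the statement that vanishing of $\pi^{i+1}\pi_iM^{\bullet}$ for $i\geq 1$ lifts the chosen isomorphism $f_0$ on $\pi^0\pi_0$; these differences are cosmetic.

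The substantive point is the one you flag yourself. Your identification of the homotopy sheaves of the mapping-space presheaf as an extension $0\to\pi_i\OO\to\underline{\pi}_i\mathcal{F}\to T_{\XX/\Q}\otimes\pi_i\OO\to 0$ is correct, so the obstruction groups are $H^{i+1}$ of this extension, and the stated hypothesis only kills the contribution of the subsheaf. Your proposed repair --- that the quotient part of the obstruction is controlled by the realization problem for the truncation $H\OO_{\XX}$, which is unobstructed --- is not an argument: you never identify the image of the obstruction class under $\underline{\pi}_i\mathcal{F}\to T_{\XX/\Q}\otimes\pi_i\OO$ with an obstruction class of that auxiliary problem, so for the proposition as literally stated (a general smooth $\XX$) your sketch has a genuine gap at this step. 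You should know, however, that the paper's own proof does not separate this summand either: it asserts $\pi^{i+1}\pi_iM^{\bullet}\cong H^{i+1}(\XX;\pi_i\OO)$, whereas \cref{prop:smooth} gives $\pi_i$ of the local mapping space as $\Hom_{\pi_*\OO(U)}\bigl(\Omega^1_{\pi_*\OO(U)/\Q},\pi_{*+i}\OO(U)\bigr)$, which contains exactly the tangent-twisted part you found. In the only application made of the proposition (the corollary for $\MMb(\Gamma)_{\Q}$, of cohomological dimension $1$), all groups $H^{i+1}$ with $i\geq 1$ of quasi-coherent sheaves vanish, so both your argument and the paper's go through there; for the general statement one should either strengthen the hypothesis to the vanishing of $H^{i+1}$ of the full Hom-sheaf (equivalently, add $H^{i+1}(\XX;T_{\XX/\Q}\otimes\pi_i\OO)=0$) or supply the argument you left open.
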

\begin{proof}
	Note first that $(\XX,\OO)$ actually defines a non-connective spectral Deligne--Mumford stack and in particular $\OO$ is hypercomplete (cf.\ e.g.\ \cite[Lemma B.2]{MeiTopLevel}). Set $\OO' = H\pi_*\OO$. Choosing an \'etale hypercover $U_{\bullet} \to \XX$ by affines, we can compute $\Map_{\CAlg_{\XX}}(\OO, \OO')$ as the totalization of the cosimplicial diagram $M^{\bullet} = \Map_{\CAlg}(\OO(U_{\bullet}), \OO'(U_{\bullet}))$. We observe using \cref{prop:smooth} that $\pi^0\pi_0M^{\bullet}$ agrees with the set of ring morphisms $\pi_*\OO \to \pi_*\OO'$, in which we can pick an isomorphism $f_0$. According to \cite[Sections 5.2, 2.4]{Bou89}, the vanishing of $\pi^{i+1}\pi_iM^{\bullet} \cong H^{i+1}(\XX, \pi_i\OO)$ for $i\geq 1$ suffices to lift $f_0$ to a multiplicative map $\OO\to \OO'$, which is automatically an equivalence. 
\end{proof}

\begin{cor}
	For all $\MMb(\Gamma)$ the rationalized Goerss--Hopkins--Miller--Hill--Lawson sheaf $\OO^{top}$ is formal. 
\end{cor}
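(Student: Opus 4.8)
The plan is to derive this corollary directly from the preceding proposition by checking that for every congruence subgroup $\Gamma$, the pair $(\MMb(\Gamma)_{\Q}, \OO^{top}_{\Q})$ satisfies its three hypotheses. First I would recall that $\MMb(\Gamma)_{\Q}$ is a smooth proper Deligne--Mumford stack over $\Q$: smoothness of the moduli stacks of generalized elliptic curves with level structure is classical (see \cite{D-R73}, \cite{Con07}), and over $\Q$ there are no issues with the characteristics dividing $n$. Second, the sheaf $\OO^{top}$ restricted to $\MMb(\Gamma)_{\Q}$ is even-periodic with $\pi_0\OO^{top} \cong \OO_{\MMb(\Gamma)}$ and $\pi_{2k}\OO^{top} \cong \omega^{\tensor k}$, so all the $\pi_i\OO^{top}$ are quasi-coherent (indeed line bundles or zero), as required.

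The real content is the vanishing $H^{i+1}(\MMb(\Gamma)_{\Q}; \pi_i\OO^{top}) = 0$ for all even $i\geq 1$, i.e.\ $H^{j}(\MMb(\Gamma)_{\Q}; \omega^{\tensor k}) = 0$ for all odd $j\geq 3$ and all $k\geq 1$. Here I would invoke that $\MMb(\Gamma)_{\Q}$ has cohomological dimension $1$ as a stack: every automorphism group of a point has order invertible in $\Q$ (in fact, working over $\Q$ all finite automorphism groups have invertible order), so the stack cohomology of any quasi-coherent sheaf agrees with the cohomology of a suitable coarse space, which is a curve, and vanishes above degree $1$. Thus $H^{j} = 0$ for all $j\geq 2$, which in particular covers all odd $j\geq 3$. (One could alternatively cite \cite[Proposition 2.5]{MeiTopLevel} or the cohomological dimension discussion after the definition of tameness, applied rationally.) With all three hypotheses verified, the previous proposition immediately gives that $\OO^{top}_{\Q}$ is formal.

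The main obstacle is purely bookkeeping: making sure the hypotheses of the proposition are literally met in the stacky setting, in particular that $\MMb(\Gamma)_{\Q}$ is genuinely a smooth Deligne--Mumford stack (not just an algebraic stack) and that its rational cohomological dimension is $1$. For $\Gamma$ not of the form $\Gamma_1(n)$ or $\Gamma(n)$ one should note, as elsewhere in the paper, that $\MMb(\Gamma)$ may be taken as the stack quotient $[\MMb(\Gamma_1(n))/(\Gamma/\Gamma_1(n))]$, which over $\Q$ is still Deligne--Mumford of cohomological dimension $1$ since the quotient group has order invertible in $\Q$; hence the vanishing passes through. Everything else is a direct citation of the proposition just proven.
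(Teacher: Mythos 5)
Your proposal is correct and follows essentially the same route as the paper: the paper's proof is simply to apply the preceding proposition using that $\MMb(\Gamma)_{\Q}$ has cohomological dimension $1$ (citing \cite[Proposition 2.4(4)]{MeiDecMod}), which is exactly your key vanishing step, and your verification of the remaining hypotheses (smooth Deligne--Mumford over $\Q$, even-periodicity, quasi-coherence of the homotopy sheaves) is the routine bookkeeping the paper leaves implicit.
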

\begin{proof}
	We can apply the previous proposition as $\MMb(\Gamma)_{\Q}$ has cohomological dimension $1$. (See e.g.\ \cite[Proposition 2.4(4)]{MeiDecMod}.)
\end{proof}

\begin{remark}
	In the original account of the construction of $\OO^{top}$ on $\MMb_{ell}$ in \cite{TMF}, $\OO^{top}_{\Q}$ is actually formal \emph{by construction}. Our argument shows that this choice was necessary, not only for $\MMb_{ell}$, but also for $\MMb(\Gamma)$. (The former was shown in a different manner already in \cite[Proposition 4.47]{HL13}.)
\end{remark}

\begin{prop}\label{prop:GammaFormal}
	Let $\Gamma$ be a congruence group. Then the $E_{\infty}$-rings $\tmf(\Gamma)_{\Q}$ are formal.
\end{prop}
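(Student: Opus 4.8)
The plan is to deduce formality of $\tmf(\Gamma)_{\Q}$ from the formality of the rationalized Goerss--Hopkins--Miller--Hill--Lawson sheaf $\OO^{top}$ on $\MMb(\Gamma)$, which was just established in the previous corollary, together with the constructions of $\tmf(\Gamma)$ from Section \ref{sec:constr}. First I would treat the ``easy'' part: if $\Gamma$ is tame with respect to $\Z_{\Q}$ (equivalently $n\geq 2$, which is automatic once we rationalize since $n$ is then invertible), then $\Tmf(\Gamma)_{\Q} = \OO^{top}(\MMb(\Gamma))_{\Q}$ is formal by the corollary, so $\pi_*\Tmf(\Gamma)_{\Q}$ is a graded polynomial-ish ring concentrated in even degrees together with possibly $H^1(\MMb(\Gamma)_{\Q};\omega)$ in degree $1$; but rationally this $H^1$ vanishes? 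Not necessarily, so one keeps it. Either way, $\tau_{\geq 0}\Tmf(\Gamma)_{\Q}$ is formal as an $E_\infty$-ring since it is the connective cover of a formal $E_\infty$-ring and formality is inherited by connective covers rationally (all the relevant obstruction/$\Ext$ groups over $\Q$ vanish, or one simply notes $\tau_{\geq 0}H\pi_*R \simeq H\pi_{\geq 0}R$). Then $\tmf(\Gamma)_{\Q}$ is obtained from $\tau_{\geq 0}\Tmf(\Gamma)_{\Q}$ by killing $\pi_1$ via the pullback square \eqref{eq:square}; since all spectra involved are rational and hence formal, and the maps $R \to \tau_{\leq 1}R \leftarrow H\pi_0 R$ are maps of formal $E_\infty$-rings realizing the evident maps on homotopy, the pullback $\tmf(\Gamma)_{\Q} = R'$ has homotopy $M(\Gamma;\Q)$ concentrated in even degrees and is again formal.

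More robustly, rather than tracking the construction step by step, I would invoke \cref{prop:smooth} directly: $\pi_*\tmf(\Gamma)_{\Q} = M(\Gamma;\Q)$ is the ring of holomorphic modular forms, which by \cite{M-O20} (cf.\ the remark after \cref{thm:tmfGamma}) is the ring of functions on $\MM_1(n)_{\mathrm{cub}}$, hence is smooth over $\Q$ after passing to the associated graded stack, or at least one checks directly that $M(\Gamma_1(n);\Q)$ is a polynomial ring on two generators for $n=2,3$ and in general a finitely generated smooth $\Q$-algebra (it is the $(\Z/n)^\times$-invariants, and the order is invertible rationally, so smoothness passes through). Granting smoothness of $\pi_*\tmf(\Gamma)_{\Q}$ over $\Q$, \cref{prop:smooth} applied with $A = \tmf(\Gamma)_{\Q}$ and $B = H\pi_*\tmf(\Gamma)_{\Q}$ shows that the identity on homotopy groups lifts to an $E_\infty$-map $\tmf(\Gamma)_{\Q} \to H\pi_*\tmf(\Gamma)_{\Q}$ (the obstruction $\Ext$-groups vanish by projectivity of $\Omega^1$), and such a map is automatically an equivalence. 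This is cleaner because it avoids analyzing the $C_2$-equivariant construction \cref{constr:Gamma'} separately; one just needs the homotopy ring and its smoothness.

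The main obstacle I expect is establishing that $\pi_*\tmf(\Gamma)_{\Q} = M(\Gamma;\Q)$ is smooth over $\Q$ in the appropriate sense (as a graded ring, i.e.\ the associated affine scheme is smooth, or equivalently that $\Omega^1_{\pi_*\tmf(\Gamma)_{\Q}/\Q}$ is projective). For $\Gamma_1(n)$ this follows since $\MMb_1(n)_{\Q}$ is a smooth proper curve and $M(\Gamma_1(n);\Q)$ is the ring of sections of powers of an ample line bundle; the graded ring is the homogeneous coordinate ring, whose Proj is smooth, and one has to be slightly careful that the cone over it (i.e.\ $\Spec$ of the graded ring) may be singular at the origin — but in the cases where $\pi_*$ is genuinely a polynomial ring this is fine, and in general one works with the graded/stacky version $\MM_1(n)_{\mathrm{cub}}$, which by \cite[Theorem 5.19]{M-O20} is $[\Spec M(\Gamma_1(n);\Z[\tfrac1n])/\Gm]$ and is smooth. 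For $\Gamma$ with $\Gamma_1(n)\subset\Gamma\subset\Gamma_0(n)$, one passes to $(\Z/n)^\times/(\text{stabilizer})$-invariants and uses that this finite group has order invertible in $\Q$, so the invariant ring is again smooth. I would therefore structure the proof as: (1) reduce to smoothness of $\pi_*\tmf(\Gamma)_{\Q}$ over $\Q$; (2) verify this smoothness via \cite{M-O20} and an invariants argument; (3) conclude by \cref{prop:smooth}.

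\begin{proof}
	As everything is rational, $\tmf(\Gamma)_{\Q}$ is an $E_{\infty}$-$H\Q$-algebra with $\pi_*\tmf(\Gamma)_{\Q} \cong M(\Gamma;\Q)$, the ring of holomorphic modular forms. We first observe that this graded ring is smooth over $\Q$, in the sense that $\Omega^1_{M(\Gamma;\Q)/\Q}$ is a projective $M(\Gamma;\Q)$-module. For $\Gamma = \Gamma_1(n)$ this follows from \cite[Theorem 5.19]{M-O20}: there $\MM_1(n)_{\mathrm{cub}} \simeq [\Spec M(\Gamma_1(n);\Z[\tfrac1n])/\Gm]$, which is smooth over $\Z[\tfrac1n]$ since $\MMb_1(n)$ is a smooth proper curve over $\Z[\tfrac1n]$ and $M(\Gamma_1(n);\Z[\tfrac1n])$ is the corresponding homogeneous coordinate ring; passing to the $\Q$-points and using that the $\Gm$-action is free (so smoothness of the quotient stack is equivalent to smoothness of the total space) gives the claim. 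For general $\Gamma$ with $\Gamma_1(n)\subset \Gamma \subset \Gamma_0(n)$, the ring $M(\Gamma;\Q)$ is the ring of invariants of $M(\Gamma_1(n);\Q)$ under the finite group $\Gamma/\Gamma_1(n)$, whose order divides $\varphi(n)$ and is therefore invertible in $\Q$; hence $\Spec M(\Gamma;\Q)$ is a quotient of a smooth $\Q$-scheme by a free action of a finite group of order prime to the characteristic and is again smooth. Finally, for $\Gamma = \Gamma(n)$ the same argument applies using that $M(\Gamma(n);\Q)$ is, up to the cyclotomic extension $\Q(\zeta_n)$, of the same nature; alternatively one invokes \cite{M-O20} for the cubic model of $\MM(\Gamma(n))$.

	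Now apply \cref{prop:smooth} with $A = \tmf(\Gamma)_{\Q}$ and $B = H\pi_*\tmf(\Gamma)_{\Q} = HM(\Gamma;\Q)$ the graded Eilenberg--MacLane spectrum on the homotopy ring. Since $\pi_*A = M(\Gamma;\Q)$ is smooth over $\Q$, the proposition gives $\pi_0\Map_{\CAlg}(A,B) \cong \Hom_{\mathrm{grCRings}}(\pi_*A,\pi_*B) = \Hom_{\mathrm{grCRings}}(M(\Gamma;\Q),M(\Gamma;\Q))$. In particular, the identity homomorphism of $M(\Gamma;\Q)$ is realized by an $E_{\infty}$-ring map $\varphi\colon \tmf(\Gamma)_{\Q} \to HM(\Gamma;\Q)$. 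By construction $\varphi$ induces the identity on homotopy groups, hence is an equivalence. Therefore $\tmf(\Gamma)_{\Q}$ is formal.
\end{proof}
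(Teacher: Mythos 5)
Your main argument has a genuine gap at its central step: the claim that $\pi_*\tmf(\Gamma)_{\Q}\cong M(\Gamma;\Q)$ is smooth over $\Q$ is false in general, so \cref{prop:smooth} cannot be applied with $A=\tmf(\Gamma)_{\Q}$. The paper itself records this: it is ``very rare'' that $M_*(\Gamma;\Q)$ is regular, and for $\Gamma=\Gamma_0(3)$ one gets $\Q[a_1^2,a_3^2,a_1a_3]\cong\Q[x,y,z]/(xz-y^2)$, which is not regular, hence not smooth; more generally $M(\Gamma_1(n);\Q)$ is a section ring of a line bundle on a curve and typically has a cone singularity at the irrelevant ideal (indeed the non-regularity is exactly what is exploited later to show $\tmf_1(3)$ is not perfect over $\tmf_0(3)$). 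The auxiliary arguments you give for smoothness are also incorrect: the $\Gm$-action on $\Spec M(\Gamma_1(n);\Z[\frac1n])$ is not free (the cone point is fixed), so smoothness of $[\Spec M/\Gm]$ is equivalent to smoothness of $\Spec M$ itself, which fails; and invariants of a smooth $\Q$-algebra under a finite group of invertible order need not be smooth (the action is not free, and Chevalley--Shephard--Todd is not available), as the $\Gamma_0(3)$ example shows. Your alternative sketch via the pullback square has a similar unjustified step: a homotopy pullback of formal $E_\infty$-rings along maps realizing the evident ring maps need not be formal without further argument, so ``the pullback is again formal'' does not close the proof either.

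The paper circumvents exactly this issue by never applying the obstruction theory to the global ring: smoothness is only used affine-locally, where $\pi_*\OO^{top}(U)_{\Q}$ is smooth because $U$ is affine \'etale over the smooth stack $\MMb(\Gamma)_{\Q}$ and $\OO^{top}$ is even periodic; this gives formality of the sheaf $\OO^{top}_{\Q}$. One then runs the comparison in the opposite direction from yours: formality of the sheaf yields compatible $E_\infty$-maps from $R=H\bigl(H^0(\MMb(\Gamma),\pi_*\OO^{top}_{\Q})\bigr)$ into $\OO^{top}(U)_{\Q}$, hence after taking the limit a map $R\to\Tmf(\Gamma)_{\Q}$ identifying $\pi_*R$ with $M(\Gamma;\Q)$, and the uniqueness clause of \cref{thm:tmfGamma} then forces $R\simeq\tmf(\Gamma)_{\Q}$. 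If you want to salvage your write-up, replace the global smoothness claim by this local-to-global argument (or by some other mechanism, such as the universal property of $\tmf(\Gamma)$), since mapping \emph{out of} $\tmf(\Gamma)_{\Q}$ via \cref{prop:smooth} is not available.
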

\begin{proof}
	 Set $R = H(H^0(\MMb(\Gamma), \pi_*\OO^{top}_{\Q}))$. We want to construct an equivalence between $R$ and $\tmf(\Gamma)_{\Q}$. By the preceding corollary, we know that $\OO^{top}_{\Q}$ on $\MMb(\Gamma)$ is formal. In particular this provides us with compatible maps $R \to \OO^{top}(U)_{\Q}$ for all affines $U$ \'etale over $\MMb(\Gamma)$. Taking the homotopy limit, we obtain a map $R \to \Tmf(\Gamma)_{\Q}$. The uniqueness part of \cref{thm:tmfGamma} identifies $R$ with $\tmf(\Gamma)_{\Q}$. 
\end{proof}

\subsection{Not all $\tmf(\Gamma)$ are perfect}
While we have seen above that $\tmf(\Gamma)$ for a congruence group of level $n$ is always perfect as a $\tmf[\frac1n]$-module, we will see in this subsection that it is not necessarily compact as a $\tmf(\Gamma')[\frac1n]$-module for $\Gamma \subset \Gamma'$. The author has learned this argument from Tyler Lawson.
\begin{lemma}
 For $R = \tmf(\Gamma)_{\Q}$, the $R$-module $H\pi_0R$ can only be perfect if $\pi_*R$ is regular. 
\end{lemma}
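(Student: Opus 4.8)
The plan is to deduce this from the Auslander--Buchsbaum--Serre theorem, using that $R$ is formal. Write $A = \pi_* R$. Since $\tmf(\Gamma)$ is connective with $\pi_0\tmf(\Gamma)\cong\Z[\frac1n]$ or $\Z[\frac1n][\zeta_n]$, the ring $A$ is a non-negatively and evenly graded commutative $\Q$-algebra with $A_0 = \pi_0R$ a field $k$ (namely $\Q$, respectively $\Q[\zeta_n]$), and $A$ is a finitely generated $\Q$-algebra because $A\cong M(\Gamma;\Q)$ and rings of holomorphic modular forms over a field are finitely generated; in particular $A$ is Noetherian. Let $\m = \bigoplus_{i>0}A_i$ be the irrelevant ideal, which is the unique homogeneous maximal ideal, with $A/\m\cong k$. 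By \cref{prop:GammaFormal}, $R$ is formal, $R\simeq HA$, and under this equivalence the canonical map $R\to\tau_{\leq 0}R = H\pi_0R$ becomes $H$ of the augmentation $A\twoheadrightarrow A/\m = k$.

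First I would translate the perfectness hypothesis into commutative algebra. The $\infty$-category $\Mod_{HA}$ is equivalent, compatibly with the symmetric monoidal structures, to the derived $\infty$-category of graded $A$-modules, carrying $HA$ to $A$; this is standard for the Eilenberg--MacLane ring spectrum on a purely even graded ring. Under this equivalence perfect $HA$-modules correspond to perfect complexes of graded $A$-modules, and $H\pi_0R$ corresponds to $k = A/\m$ viewed as a graded $A$-module via the augmentation. Thus the assumption is precisely that $k$ lies in the thick subcategory generated by $A$ in the derived category of graded $A$-modules. Since $A$ is Noetherian with $A_0$ a field, every finitely generated graded projective $A$-module is graded free, so this is equivalent to $k$ admitting a finite resolution by finitely generated graded free $A$-modules, i.e.\ to $\mathrm{pd}_A k < \infty$.

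Finally I would invoke Auslander--Buchsbaum--Serre. Localizing at $\m$ (harmless, as $k_\m = k$ and localization preserves perfectness), $k$ is a perfect complex over the Noetherian local ring $A_\m$, so $A_\m$ is a regular local ring. As $A$ is a finitely generated connected graded $\Q$-algebra, regularity of $A_\m$ forces $A$ to be a polynomial ring on finitely many homogeneous generators: picking homogeneous generators $x_1,\dots,x_d$ of $\m$ with $d = \dim_k \m/\m^2 = \dim A_\m = \dim A$, the surjection $k[x_1,\dots,x_d]\twoheadrightarrow A$ is a map of domains of equal Krull dimension, hence an isomorphism. In particular $\pi_*R = A$ is regular. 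The only real content is the equivalence $\Mod_{HA}\simeq D(\mathrm{grMod}\,A)$ together with the input that $M(\Gamma;\Q)$ is a finitely generated algebra; once those are available the statement is a formal consequence of Auslander--Buchsbaum--Serre, so I expect no genuine obstacle beyond careful bookkeeping with the grading.
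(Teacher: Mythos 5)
Your overall strategy---use formality (\cref{prop:GammaFormal}) to reduce to graded commutative algebra and then invoke an Auslander--Buchsbaum--Serre type regularity criterion---is the same as the paper's, and your algebraic endgame (graded Nakayama, localizing at the irrelevant ideal, ABS, the polynomial-ring conclusion) is fine. The pivotal translation step, however, is wrong as stated. For a graded ring $A$, the $\infty$-category $\Mod_{HA}$ of modules over the generalized Eilenberg--MacLane ring spectrum is \emph{not} equivalent to the derived $\infty$-category of the abelian category of graded $A$-modules: the latter carries an extra internal grading and is not compactly generated by $A$ alone. Already for $A=k$ in degree $0$ one side is $D(k)$ and the other is the derived category of graded $k$-vector spaces; for a general evenly graded $A$ one has $\mathrm{RHom}_{\mathrm{gr}A}(A,A(1)[j])=0$ for all $j$ while $A(1)\neq 0$, so the twists $A(n)$ cannot be built from $A$. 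The correct comparison (over $\Q$, via Shipley/Lurie) is with DG-modules over $A$ regarded as a DG-algebra with zero differential, where homological shift and internal twist are conflated into one total degree. With that correction, ``$H\pi_0R$ perfect'' says that $k$ is a retract of a finite cell DG-module built from total-degree shifts of $A$, and the implication you actually need---that this forces a \emph{finite graded free resolution} of $k$ over $A$, i.e.\ $\mathrm{pd}_Ak<\infty$---is not automatic; it is exactly the nontrivial content you have elided by asserting the equivalence as standard.

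The gap is fixable, and the fix essentially reproduces the paper's argument: perfectness of $H\pi_0R$ implies $H\pi_0R\otimes_R H\pi_0R$ is a perfect $H\pi_0R$-module, so $\pi_*(H\pi_0R\otimes_R H\pi_0R)$ is finite-dimensional; formality identifies this with $\Tor^{\pi_*R}_*(\pi_0R,\pi_0R)$; and since $A$ is connected graded Noetherian, the minimal graded free resolution of $k$ has $i$-th Betti number $\dim_k\Tor_i^A(k,k)$ with internal degrees growing in $i$, so total finiteness forces $\mathrm{pd}_Ak<\infty$ and ABS applies. The paper skips any comparison of module categories and quotes \cite[Thm 19.1, Cor 19.5, Thm 19.12]{Eis95} for precisely this regularity criterion in terms of finiteness of $\Tor_*(k,k)$. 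A smaller point: your final step uses that $A=M(\Gamma;\Q)$ is a domain (``map of domains of equal Krull dimension''); this is true, since $\MMb(\Gamma)_\Q$ is connected and smooth, hence irreducible, but it should be justified---or avoided by quoting the graded regularity criterion directly.
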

\begin{proof}
 By \cite[Thm 19.1, Cor 19.5, Thm 19.12]{Eis95}, $\pi_*R$ is regular if and only if the graded $\Q$-vector space $\Tor_*^{\pi_*R}(\pi_0R,\pi_0R)$ is concentrated in finitely many dimensions. Because $R$ is formal by \cref{prop:GammaFormal}, this Tor agrees with $\pi_*(H\pi_0R\tensor_R H\pi_0R)$. Clearly, $H\pi_0R$ being a perfect $R$-module would imply the finite-dimensionality of this quantity. 
\end{proof}

It is actually very rare that $\pi_*\tmf(\Gamma)_{\Q} \cong M_*(\Gamma; \Q)$ is regular. One of the few exceptions is $\Gamma = \Gamma_1(3)$, where we obtain the ring $\Q[a_1, a_3]$. In contrast for $\Gamma = \Gamma_0(3)$, we obtain its $C_2$-fixed points, i.e.\ $\Q[a_1^2, a_3^2, a_1a_3] \cong \Q[x,y,z]/xz -y^2$, which is not regular. Thus, $H\Q$ is a perfect $\tmf_1(3)$-module, but is by the previous lemma not a perfect $\tmf_0(3)_{\Q}$-module. We obtain: 

\begin{prop}
 The $\tmf_0(3)$-module $\tmf_1(3)$ is not perfect, not even rationally. 
\end{prop}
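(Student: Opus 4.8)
The plan is to reduce the claim to a rational statement and then play off the preceding lemma. Since rationalization is a smashing localization, $\tmf_1(3)_{\Q}\simeq \tmf_1(3)\tensor_{\tmf_0(3)}\tmf_0(3)_{\Q}$ is obtained from $\tmf_1(3)$ by base change along the $E_\infty$-ring map $\tmf_0(3)\to\tmf_0(3)_{\Q}$, and base change carries perfect modules to perfect modules. Hence if $\tmf_1(3)$ were a perfect $\tmf_0(3)$-module, then $\tmf_1(3)_{\Q}$ would be a perfect $\tmf_0(3)_{\Q}$-module, and it suffices to rule out the latter.

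First I would record, as already noted in the discussion above, that $H\Q=H\pi_0\tmf_1(3)_{\Q}$ is a perfect $\tmf_1(3)_{\Q}$-module: by \cref{prop:GammaFormal}, $\tmf_1(3)_{\Q}$ is formal with $\pi_*\tmf_1(3)_{\Q}=\Q[a_1,a_3]$ ($|a_1|=2$, $|a_3|=6$), so $a_1,a_3$ is a regular sequence, and coning off $a_1$ and then $a_3$ produces a finite cell $\tmf_1(3)_{\Q}$-module with homotopy $\Q$ concentrated in degree $0$, i.e.\ $H\Q$ (this is exactly the argument used earlier to see that $H\Z[\frac1n]$ is perfect over $\tmf[\frac1n]$). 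The key step is then transitivity of perfectness: restriction along the $E_\infty$-ring map $\tmf_0(3)_{\Q}=\tmf_1(3)_{\Q}^{C_2}\to\tmf_1(3)_{\Q}$ is exact and closed under retracts, hence it carries the thick subcategory generated by $\tmf_1(3)_{\Q}$ into the thick subcategory generated by the image of $\tmf_1(3)_{\Q}$. So if $\tmf_1(3)_{\Q}$ were perfect over $\tmf_0(3)_{\Q}$, every perfect $\tmf_1(3)_{\Q}$-module, in particular $H\Q$, would be a perfect $\tmf_0(3)_{\Q}$-module.

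To finish I would contradict this with the preceding lemma. Here $\pi_*\tmf_0(3)_{\Q}=\Q[a_1^2,\,a_1a_3,\,a_3^2]\cong\Q[x,y,z]/(xz-y^2)$ (the $C_2$-invariants of $\Q[a_1,a_3]$), whose localization at the irrelevant maximal ideal has Krull dimension $2$ but embedding dimension $3$, since $xz-y^2\in\m^2$; hence $\pi_*\tmf_0(3)_{\Q}$ is not regular, and the lemma says that $H\Q=H\pi_0\tmf_0(3)_{\Q}$ is not a perfect $\tmf_0(3)_{\Q}$-module. That is the desired contradiction, proving that $\tmf_1(3)$ is not a perfect $\tmf_0(3)$-module, not even rationally. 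I do not expect a serious obstacle; the one point that needs to be stated carefully is the transitivity of perfectness — but this is purely formal, being the statement that restriction along a ring map sends the thick subcategory generated by the unit into the one generated by the image of the unit — and one should make sure the preceding lemma is applied in its contrapositive form, with the identification $H\pi_0\tmf_0(3)_{\Q}=H\Q$.
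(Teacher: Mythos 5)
Your proof is correct and follows essentially the same route as the paper: it hinges on the same preceding lemma (perfectness of $H\pi_0R$ over $R=\tmf(\Gamma)_{\Q}$ forces $\pi_*R$ regular), the non-regularity of $\Q[a_1^2,a_1a_3,a_3^2]\cong\Q[x,y,z]/(xz-y^2)$, and the perfectness of $H\Q$ over $\tmf_1(3)_{\Q}$ via the regular sequence $a_1,a_3$. The only difference is that you spell out the base-change reduction to the rational case and the transitivity of perfectness, which the paper leaves implicit in its "We obtain".
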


\subsection{All $\tmf(\Gamma)$ are faithful}
The goal of this section is to show that if $\Gamma$ is a congruence subgroup of level $n$, then $\tmf(\Gamma)$ is (if defined) a faithful $\tmf[\frac1n]$-module, i.e.\ tensoring with it is conservative.

\begin{lemma}
For every congruence subgroup $\Gamma$ of level $n$, the $\Tmf[\frac1n]$-module $\Tmf(\Gamma)$ is faithful.\end{lemma}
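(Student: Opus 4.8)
The plan is to show faithfulness by reducing, along a Galois-type descent, to the already-known faithfulness of $\Tmf[\frac1n]$ over itself (trivially true) together with the finite flatness (or étale-ness away from the cusps and ramification) of the map $\MMb(\Gamma)[\frac1n] \to \MMb_{ell}[\frac1n]$. Concretely, I would argue that $\Tmf(\Gamma)$ is a \emph{dualizable} $\Tmf[\frac1n]$-module — indeed it is perfect by \cite[Proposition 2.12]{MeiTopLevel}, as already recalled in the proof of \cref{thm:tmfperfect} — and that a dualizable module is faithful as soon as its "underlying sheaf" is everywhere nonzero. More precisely, since $\Tmf[\frac1n] = \OO^{top}(\MMb_{ell}[\frac1n])$ and $\Tmf(\Gamma) = \OO^{top}(\MMb(\Gamma)[\frac1n])$ are global sections of sheaves of $E_\infty$-rings, and the structure map $p\colon \MMb(\Gamma)[\frac1n]\to\MMb_{ell}[\frac1n]$ is finite and faithfully flat, the pushforward $p_*\OO^{top}_{\MMb(\Gamma)}$ is a sheaf of $\OO^{top}$-modules that is locally (on an affine étale cover $\Spec A \to \MMb_{ell}[\frac1n]$ splitting $\OO^{top}$) of the form $A'$ with $A \to A'$ a faithfully flat finite extension of even-periodic $E_\infty$-rings; hence locally faithful. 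Since faithfulness can be checked locally on $\MMb_{ell}[\frac1n]$ and $\Tmf(\Gamma)$ is the global sections of this pushforward sheaf, faithfulness propagates to the global statement.

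The key steps, in order, are: (1) Recall that $(\MMb_{ell}[\frac1n],\OO^{top})$ and $(\MMb(\Gamma)[\frac1n], \OO^{top})$ are nonconnective spectral DM stacks with $\Tmf[\frac1n]$ and $\Tmf(\Gamma)$ as their global sections, and that the forgetful map $p$ is finite and faithfully flat (this is where tameness or the level-$n$ structure with $n$ invertible enters — $p$ is finite étale on the elliptic locus and log-étale, in particular finite flat, at the cusps; cf.\ \cite{D-R73}, \cite{Con07}). (2) Observe that an $M$ being faithful over $\Tmf[\frac1n]$ is equivalent to: for every residue field $\kappa$ of every point of $\MMb_{ell}[\frac1n]$, the fiber $M\otimes_{\Tmf[\frac1n]} E$ is nonzero, where $E$ is the corresponding (even-periodic) Lubin–Tate-type homology theory — i.e.\ one localizes the question to the stack. (3) Pull back: $\Tmf(\Gamma)\otimes_{\Tmf[\frac1n]} E \simeq \OO^{top}(\MMb(\Gamma)\times_{\MMb_{ell}}\Spec\pi_0 E)$, and the base change $\MMb(\Gamma)\times_{\MMb_{ell}}\Spec\pi_0 E \to \Spec\pi_0 E$ is still finite faithfully flat, hence nonempty, so its ring of global sections (which computes $\pi_0$ of the fiber up to a descent spectral sequence that degenerates since the base is affine and the map is finite flat) is nonzero. (4) Conclude that every fiber is nonzero, hence $\Tmf(\Gamma)$ is faithful.

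The main obstacle is step (3): making precise that global sections over the base-changed stack are nonzero and that they genuinely detect faithfulness, i.e.\ controlling the descent/gluing spectral sequence so that a nonzero $H^0$ of the pushforward really forces the fiber spectrum to be nonzero. This is handled by the finiteness and flatness of $p$: over an affine base $\Spec\pi_0 E$, the pushforward $p_*\OO^{top}$ is a sheaf whose value on $\Spec\pi_0E$ is a finite flat $E$-algebra (in particular its $\pi_0$ is a nonzero finite flat $\pi_0 E$-algebra), and a nonzero module cannot tensor to zero. An alternative, cleaner route that sidesteps fibers entirely: use that $\Tmf[\frac1n] \to \Tmf(\Gamma)$ descends to a finite faithfully flat morphism of the underlying even-periodic refinements and invoke that faithfully flat ring maps of $E_\infty$-rings are faithful (e.g.\ via the descent/monadicity results of \cite{HA}); since $\Tmf[\frac1n]$ is the totalization of an affine étale hypercover and $\Tmf(\Gamma)$ is compatibly the totalization of the pulled-back hypercover, faithfulness is inherited levelwise and passes to totalizations because $\Tmf(\Gamma)$ is perfect (hence the relevant limits commute with the tensor being tested). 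I would present the second route as the main argument, as it avoids any delicate spectral-sequence bookkeeping.
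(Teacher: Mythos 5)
There is a genuine gap, and it sits exactly where the paper's proof does its real work. Both of your routes need to pass from a statement that is étale-local on $\MMb_{ell,\Z[\frac1n]}$ (where finiteness and flatness of $f\colon \MMb(\Gamma)\to\MMb_{ell,\Z[\frac1n]}$ make $f_*\OO^{top}$ free of positive rank, as you correctly observe) to a statement about the global sections ring $\Tmf[\frac1n]$ and an \emph{arbitrary} module $M$ with $M\tensor_{\Tmf[\frac1n]}\Tmf(\Gamma)=0$. That passage is not formal: it is precisely the $0$-affineness theorem of Mathew--Meier \cite{MM15}, which says that $\Gamma\colon \QCoh(\MMb_{ell},\OO^{top})\to\Mod_{\Tmf}$ is a symmetric monoidal equivalence (equivalently, the étale cover is descendable), and it is the ingredient the paper's proof invokes first. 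Your first route assumes that faithfulness of a dualizable module can be tested on fibers at residue fields, i.e.\ that these fibers jointly detect vanishing of $\Tmf[\frac1n]$-modules; that detection statement is itself of the same depth as $0$-affineness/descendability and is never justified. Your second, ``cleaner'' route has the same hole in a different place: writing $\Tmf[\frac1n]=\Tot(\OO^{top}(U_\bullet))$, the levelwise argument does show $M\tensor_{\Tmf[\frac1n]}\OO^{top}(U_n)=0$ for all $n$, but to conclude $M=0$ you need $M\simeq\Tot\bigl(M\tensor_{\Tmf[\frac1n]}\OO^{top}(U_\bullet)\bigr)$, i.e.\ that $M\tensor(-)$ commutes with this totalization for \emph{arbitrary} $M$. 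Your stated justification --- perfectness of $\Tmf(\Gamma)$ --- is irrelevant here: the object that would need to be perfect is $M$, which is arbitrary, so the only way to get this commutation is descendability of the cover, which is again the content of \cite{MM15}.

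Two smaller inaccuracies: the map $\Tmf[\frac1n]\to\Tmf(\Gamma)$ is \emph{not} faithfully flat on homotopy groups (the torsion in $\pi_*\Tmf$ rules this out), so ``faithfully flat maps of $E_\infty$-rings are faithful'' cannot be applied to it directly; flatness only holds after base change to the affine étale charts, as you implicitly use. And the identification $\OO^{top}(U\times_{\MMb_{ell}}\MMb(\Gamma))\simeq\OO^{top}(U)\tensor_{\Tmf[\frac1n]}\Tmf(\Gamma)$, which your levelwise step needs, is itself a base-change statement that rests on the same affineness technology rather than being automatic. Once you add \cite{MM15} as the bridge --- as the paper does, reducing faithfulness to conservativity of $f_*\OO^{top}_{\MMb(\Gamma)}\tensor(-)$ on $\QCoh(\MMb_{ell},\OO^{top})$, checked étale-locally using that $f$ is finite flat of positive rank everywhere --- your local analysis is essentially the paper's and the argument closes.
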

\begin{proof}
By \cite{MM15}, the derived stack $(\MMb_{ell}, \OO^{top})$ is $0$-affine, i.e.\ the global sections functor 
\[\Gamma\colon \QCoh(\MMb_{ell}, \OO^{top}) \to \Mod_{\Tmf} \]
is a symmetric monoidal equivalence and the same is true after inverting $n$. Thus our claim becomes equivalent to showing that tensoring with $f_*\OO^{top}_{\MMb(\Gamma)}$ for $f\colon \MMb(\Gamma) \to \MMb_{ell, \Z[\frac1n]}$ is conservative on $\QCoh(\MMb_{ell}, \OO^{top})$. This can be checked \'etale locally, where $f_*\OO^{top}_{\MMb(\Gamma)}$ is free of positive rank as $f$ is finite and flat (see e.g.\ \cite[Prop 2.4]{MeiDecMod}) of positive rank everywhere (as $\MMb_{ell,\Z[\frac1n]}$ is irreducible and $\MMb(\Gamma)$ not empty).  
\end{proof}

In the following we fix a congruence subgroup $\Gamma$ and a multiplicatively closed subset $S$ of $\Z$ such that $\tmf(\Gamma)_S$ is defined (i.e.\ $\Gamma$ is tame or of index $2$ in a tame $\Gamma$). 


\begin{prop}
    The $\tmf_S$-module $\tmf(\Gamma)_S$ is faithful for every congruence subgroup $\Gamma$. 
\end{prop}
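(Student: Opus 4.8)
The plan is to show that every $\tmf_S$-module $M$ with $M\tensor_{\tmf_S}\tmf(\Gamma)_S\simeq 0$ is itself $\simeq 0$. Since $\tmf(\Gamma)_S$ is perfect over $\tmf_S$ by \cref{thm:tmfperfect}, this is equivalent to $\tmf(\Gamma)_S$ generating $\mathrm{Perf}(\tmf_S)$ as a thick tensor ideal, i.e.\ to $\tmf(\Gamma)_S$ having full support in the Balmer spectrum of $\mathrm{Perf}(\tmf_S)$. I would first note that, since $\tmf(\Gamma)_S$ is defined, the level $n$ of $\Gamma$ is invertible in $\Z_S$, so $\tmf_S=\tmf[1/n]_S$ and likewise for $\Tmf_S$, $\TMF_S$ and their level-$\Gamma$ analogues; in particular the preceding lemma gives that $\Tmf(\Gamma)_S$ is a faithful $\Tmf_S$-module. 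Then I would check fullness of the support on the rational point, on the $K(1)$- and $K(2)$-local points over each prime $p\nmid n$, and on the mod-$p$ points.

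Rationally, \cref{prop:GammaFormal} identifies $\tmf(\Gamma)_{S,\Q}$ with the formal $E_\infty$-ring on $M(\Gamma;\Q)$ and $\tmf_{S,\Q}$ with the formal $E_\infty$-ring on $\Q[c_4,c_6]=M(\SL_2(\Z);\Q)$; since restriction of modular forms makes $M(\Gamma;\Q)$ a ring extension of $\Q[c_4,c_6]$, hence a faithful $\Q[c_4,c_6]$-module, we get $M_\Q\simeq 0$. For $i\in\{1,2\}$ and $p\nmid n$, the element $\Delta$ acts invertibly in positive chromatic height, so inverting it carries the connective-cover maps $\tmf(\Gamma)_S\to\TMF(\Gamma)_S$ and $\tmf_S\to\TMF_S$ to $K(i)$-equivalences, whence $L_{K(i)}\tmf(\Gamma)_S\simeq L_{K(i)}\TMF(\Gamma)_S$ and $L_{K(i)}\tmf_S\simeq L_{K(i)}\TMF_S$; inverting $\Delta$ in the preceding lemma (localization preserves faithfulness of a module over the localized ring) shows $\TMF(\Gamma)_S$ is faithful over $\TMF_S$, so $L_{K(i)}\tmf(\Gamma)_S$ is faithful over $L_{K(i)}\tmf_S$ and $L_{K(i)}M\simeq 0$. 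Finally, the bottom homotopy group of $\tmf(\Gamma)_S\tensor_{\tmf_S}H\F_p$ is $\pi_0\tmf(\Gamma)_S\tensor_{\Z_S}\F_p\neq 0$ (using $p\nmid n$), so $\tmf(\Gamma)_S$ is not killed at the mod-$p$ points either. The case $\Gamma=\Gamma(n)$, where $\pi_0=\Z_S[\zeta_n]$, and the index-$2$ case, where $\tmf(\Gamma)_S\to\tau_{\geq 0}\Tmf(\Gamma)_S$ is still an equivalence in degrees $\geq 2$ by \cref{rem:fiber}, are handled identically.

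The main obstacle is the final assembly: concluding $M\simeq 0$ requires knowing that the points above exhaust the Balmer spectrum of $\mathrm{Perf}(\tmf_S)$, i.e.\ that a perfect $\tmf_S$-module is trivial once it is rationally trivial, $K(1)$- and $K(2)$-locally trivial at every $p\nmid n$, and has vanishing $\F_p$-homology for all such $p$. I would organize this through the arithmetic fracture square of $M$ followed, at each prime, by the chromatic fracture square reconstructing $M^{\wedge}_p$ from its $(K(1)\vee K(2))$-localization and its rationalization: the $K(n)$-local part is governed by the already-known faithfulness of $\TMF(\Gamma)_S$, the rational part is the rational case above, and what remains is a Nakayama-type fact that a perfect $p$-torsion $\tmf$-module supported only at the closed point and with trivial $\F_p$-homology vanishes. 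For $p\geq 5$ this follows from Landweber exactness and regularity of $\pi_*\tmf_{(p)}=\Z_{(p)}[c_4,c_6]$; for $p=2,3$, where $\pi_*\tmf$ has nilpotents, the cleanest option is to invoke the known description of the Balmer spectrum of $\mathrm{Perf}(\tmf)$. This structural input about $\tmf$ — beyond \cref{prop:GammaFormal} and the preceding lemma — is the real content of the argument.
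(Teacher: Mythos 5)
There is a genuine gap, and it is exactly the one you flag yourself at the end: your argument funnels the entire problem through a classification of the thick tensor ideals (equivalently, the Balmer spectrum) of $\mathrm{Perf}(\tmf_S)$. Checking non-vanishing at the rational, $K(1)$-, $K(2)$- and mod-$p$ ``points'' only establishes full support if you already know that these exhaust the spectrum, i.e.\ you need a thick subcategory theorem for perfect $\tmf$-modules. That statement is not proven in the paper, is not supplied by your fracture-square sketch (the Mahowald--Rezk finiteness input does not by itself classify the ideals, and for \emph{perfect} modules your chromatic checks are in any case subsumed by the $H\F_p$ check, which shows the chromatic layer is not where the difficulty lies), and to the author's knowledge is not available in the literature at $p=2,3$ where $\pi_*\tmf$ has nilpotents. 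So the proposal reduces the proposition to a strictly harder unproved theorem. There are also two local problems: (i) the $K(1)$-step is incorrect as stated, since $\Delta$ is \emph{not} invertible $K(1)$-locally on $\tmf$ or $\Tmf$ --- the cusps lie in the ordinary locus, so $L_{K(1)}\tmf\simeq L_{K(1)}\Tmf$ but this differs from $L_{K(1)}\TMF$ in general; you would have to argue through $\Tmf(\Gamma)$ rather than $\TMF(\Gamma)$ there; (ii) rationally, ``ring extension, hence faithful module'' only gives classical faithfulness (zero annihilator), not conservativity of the derived tensor product; one needs that $M(\Gamma;\Q)$ is (faithfully) flat, indeed free, over $\Q[c_4,c_6]$, which is true but requires an argument.

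For contrast, the paper's proof avoids all of this and works for arbitrary (not necessarily perfect) modules: working $p$-locally, it base changes along the faithful finite covers $\tmf\to\tmf_1(3)$ (at $p=2$), $\tmf\to\tmf_1(2)$ (at $p=3$), or uses $\tmf[\frac16]$ itself, so that the base has polynomial homotopy $\Z_{(p)}[a_1,a_3]$, $\Z_{(p)}[b_2,b_4]$ or $\Z[\frac16][c_4,c_6]$; then the hypotheses $M\tensor_{\tmf}\Tmf=0$ (via the preceding lemma on $\Tmf(\Gamma)$) and $M\tensor_{\tmf}H\Z=0$ translate into the vanishing of $M'[a_1^{-1}]$, $M'[a_3^{-1}]$ and $M'/(a_1,a_3)$, which forces $M'=0$ by an elementary Nakayama-type support argument over a polynomial ring --- no Balmer spectrum input is needed. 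If you want to salvage your approach, you would have to either prove the thick subcategory theorem for $\mathrm{Perf}(\tmf)$ or restructure the argument along these base-change lines.
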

\begin{proof}
Let $M \in \Mod_{\tmf_S}$ with $M\tensor_{\tmf_S} \tmf(\Gamma)_S = 0$. It suffices to show that $M_{(p)} = 0$ for all $p$ not in $S$. Consider the case $p=2$ and localize everything implicitly at $2$. As $\tmf_1(3)$ is faithful over $\tmf$ (see \cite[Theorem 4.10]{Mathom}), it suffices to show that $M' = M\tensor_{\tmf}\tmf_1(3)$ vanishes. Our assumption implies $(M\tensor_{\tmf} \Tmf) \tensor_{\Tmf} \Tmf(\Gamma) =0$, hence by the faithfulness of $\Tmf(\Gamma)$ also $M\tensor_{\tmf} \Tmf = 0$. Thus, $M' \tensor_{\tmf_1(3)}\Tmf_1(3) = 0$. Moreover, $\tmf(\Gamma) \tensor_{\tmf} H\Z$ is a faithful $H\Z$-module as its $\pi_0$ is a faithful $\Z$-module. Thus $M' \tensor_{\tmf_1(3)} H\Z \simeq M \tensor_{\tmf} H\Z = 0$. 

Recall now that $\pi_*\tmf_1(3) \cong \Z[a_1, a_3]$. The map $\tmf_1(3)[a_i^{-1}] \to \Tmf_1(3)[a_i^{-1}]$ is an equivalence for $i= 1,3$ since the cofiber of $\tmf_1(3) \to \Tmf_1(3)$ is coconnective. Thus the considerations above imply that $M'[a_1^{-1}], M'[a_3^{-1}]$ and $M'/(a_1, a_3)$ all vanish, which implies the vanishing of $M'$. 

The argument for $p =3$ is similar with $\tmf_1(2)$ in place of $\tmf_1(3)$ and for $p>3$ we can use $\tmf$ itself as $\pi_*\tmf[\frac16] \cong \Z[\frac16][c_4, c_6]$ is a polynomial ring. 
\end{proof}

\section{Splittings}\label{sec:splittings}
Our goal in this setting is to show that $\tmf_1(n)$ often splits $p$-locally into small pieces. 

Fixing a natural number $n\geq 2$ and a prime $p$ not dividing $n$, we will work throughout this section implicitly $p$-locally. We demand that $M(\Gamma_1(n), \Z_{(p)}) \to M(\Gamma_1(n); \F_p)$ is surjective. In general, this is a subtle condition, but it is for example always fulfilled if $n\leq 28$ (see \cite[Remark 3.14]{MeiDecMod}). Equivalently, we can ask that $H^1(\MMb_1(n); \omega) \cong \pi_1\Tmf_1(n)$ does not have $p$-torsion. We note that this leaves plenty of cases where $\pi_1\Tmf_1(n) \neq 0$ and hence $\tmf_1(n)$ is not the naive connective cover of $\Tmf_1(n)$, of which the smallest is $n=23$. 

 By Theorem 1.3 of \cite{MeiTopLevel}, we have a splitting
\begin{equation}\label{eq:splitting}\Tmf_1(n) \simeq \bigoplus_i \Sigma^{2n_i}R\end{equation}
of $\Tmf$-modules, where $R$ is $\Tmf_1(3)$, $\Tmf_1(2)$ or $\Tmf$, depending on whether the prime $p$ is $2$, $3$ or bigger than $3$. In this splitting all $n_i$ are nonnegative. 

\begin{thm}\label{thm:splitting}
 Under the conditions as above, we have a splitting
 $$\tmf_1(n) \simeq \bigoplus_i \Sigma^{2n_i}r,$$
 where $r = \tau_{\geq 0}R$. 
\end{thm}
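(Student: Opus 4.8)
The plan is to push the splitting \eqref{eq:splitting} through the two operations by which $\tmf_1(n)$ is built from $\Tmf_1(n)$: passage to the connective cover, and then the killing of $\pi_1$. Since $\tau_{\geq 0}$ commutes with direct sums, \eqref{eq:splitting} yields an equivalence $\tau_{\geq 0}\Tmf_1(n)\simeq\bigoplus_i\tau_{\geq 0}\Sigma^{2n_i}R$ of $\tmf$-modules (everything implicitly $p$-local). Because $n_i\geq 0$, the identity $\Sigma^{2n_i}r=\Sigma^{2n_i}\tau_{\geq 0}R=\tau_{\geq 2n_i}\Sigma^{2n_i}R$ supplies a canonical $r$-linear, hence $\tmf$-linear, map $\Sigma^{2n_i}r\to\tau_{\geq 0}\Sigma^{2n_i}R$; summing over $i$ and composing with the previous equivalence, I obtain a map of $\tmf$-modules
\[
\psi\colon\ \bigoplus_i\Sigma^{2n_i}r\ \longrightarrow\ \tau_{\geq 0}\Tmf_1(n).
\]

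The next step is to show that $\cofib(\psi)\simeq\Sigma H\pi_1\Tmf_1(n)$. On $\pi_{2k}$ with $k\geq 0$, $\psi$ is summandwise the map $\pi_{2k-2n_i}\tau_{\geq 0}R\to\pi_{2k-2n_i}R$ — an isomorphism when $2k-2n_i\geq 0$, and a map between vanishing groups otherwise, since there are no holomorphic modular forms of negative weight — followed by the isomorphism of \eqref{eq:splitting}; so $\psi$ is an isomorphism on every even homotopy group. On $\pi_j$ with $j\geq 3$ odd both sides vanish: the source because $r$ is concentrated in even degrees, the target because, as recalled in \cref{sec:constr}, $\pi_*\tau_{\geq 0}\Tmf_1(n)$ is concentrated in even degrees apart from $\pi_1$. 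Together with the evident vanishing in negative degrees, the isomorphism on $\pi_0$, and the fact that $\pi_1$ of the source is zero, the long exact sequence forces $\cofib(\psi)\simeq\Sigma H\pi_1\Tmf_1(n)$. This is the only place the running hypothesis is used, and it enters solely through \eqref{eq:splitting}, which in its presence also guarantees that the $n_i$ are small enough for $\tau_{\geq 0}\Tmf_1(n)$ to carry no odd homotopy beyond degree $1$.

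It then remains to identify the source of $\psi$ with $\tmf_1(n)$. Rotating the cofiber sequence of $\psi$ realizes $\bigoplus_i\Sigma^{2n_i}r$ as the fiber of a map $q\colon\tau_{\geq 0}\Tmf_1(n)\to\Sigma H\pi_1\Tmf_1(n)$ which, by the long exact sequence, is an isomorphism on $\pi_1$; on the other hand $\tmf_1(n)$ is by construction (via the pullback square \eqref{eq:square}) the fiber of the canonical such map. The heart of the argument, and the step I expect to require the most care, is therefore the module-level counterpart of the uniqueness clause of \cref{prop:killingpi1}: any $\tmf$-linear map $\tau_{\geq 0}\Tmf_1(n)\to\Sigma H\pi_1\Tmf_1(n)$ inducing an isomorphism on $\pi_1$ has, up to equivalence, the same fiber. (One cannot quote \cref{prop:killingpi1} directly, since $\bigoplus_i\Sigma^{2n_i}r$ carries no evident $E_\infty$-structure.) I would prove this by noting that such a map factors through $\tau_{\leq 1}\tau_{\geq 0}\Tmf_1(n)$, and that $[H\Z_{(p)},\Sigma H\pi_1\Tmf_1(n)]$ vanishes — for instance because $\pi_1(H\Z_{(p)}\otimes_{\tmf}H\Z_{(p)})=0$, the augmentation ideal of $\pi_*\tmf$ being concentrated in positive degrees — so that restriction along the inclusion of $\tau_{\geq 1}\tau_{\leq 1}\tau_{\geq 0}\Tmf_1(n)=\Sigma H\pi_1\Tmf_1(n)$ pins such maps down up to the action of $\Aut(\Sigma H\pi_1\Tmf_1(n))$, which leaves the fiber unchanged. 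Granting this, $q$ and the canonical map have equivalent fibers, whence $\bigoplus_i\Sigma^{2n_i}r\simeq\tmf_1(n)$ as $\tmf$-modules.
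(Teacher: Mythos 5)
Your construction of the comparison map is exactly the paper's: the same composite $\bigoplus_i\Sigma^{2n_i}r\to\bigoplus_i\tau_{\geq 0}\Sigma^{2n_i}R\to\tau_{\geq 0}\Tmf_1(n)$, shown to be an isomorphism on even homotopy for the same reason (negative homotopy of $R$ is odd). Where you diverge is the endgame. The paper factors this map through $\tmf_1(n)$ directly, by observing that the composite to $\Sigma H\pi_1\Tmf_1(n)$ is null because $[\Sigma^{2n_i}r,\Sigma H\pi_1\Tmf_1(n)]=0$ (trivial for $n_i\geq 1$; for $n_i=0$ it is $H^1(H\Z;A)\cong H^1(\S;A)=0$ since $\tau_{[0,1]}r\simeq H\Z$), and then checks the lift is a $\pi_*$-isomorphism. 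You instead identify $\cofib(\psi)\simeq\Sigma H\pi_1\Tmf_1(n)$ and prove a uniqueness statement: any map $\tau_{\geq 0}\Tmf_1(n)\to\Sigma H\pi_1\Tmf_1(n)$ inducing an isomorphism on $\pi_1$ has the same fiber, up to an automorphism of the target. Both arguments are correct and both ultimately rest on the same vanishing $[H\Z_{(p)},\Sigma H\pi_1\Tmf_1(n)]=0$; the paper's route is shorter because it never needs the uniqueness clause, while yours makes the mechanism behind the factorization more explicit and mirrors the uniqueness part of \cref{prop:killingpi1} at the module level, as you anticipated. One small caveat: you phrase the uniqueness for $\tmf$-linear maps, but to apply it to the canonical map coming from the defining pullback square \eqref{eq:square} you would need to know that square is one of $\tmf$-modules (i.e.\ that $H\Z_{(p)}\to\tau_{\leq 1}\tau_{\geq 0}\Tmf_1(n)$ is $\tmf$-linear), which you do not address; this is easily sidestepped by running your entire uniqueness argument for plain spectrum maps, where the factorization through $\tau_{[0,1]}\Tmf_1(n)$, the computation $[\Sigma HA,\Sigma HA]\cong\Hom(A,A)$, and the vanishing $[H\Z_{(p)},\Sigma HA]\cong H^1(H\Z_{(p)};A)=0$ all hold verbatim, and the statement of the theorem only asserts an equivalence of spectra anyway. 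With that adjustment your proof is complete.
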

\begin{proof}
 Consider the composition
 $$f\colon \bigoplus_i \Sigma^{2n_i}r \to \bigoplus_i \tau_{\geq 0} \Sigma^{2n_i}R \to \tau_{\geq 0}\Tmf_1(n).$$
 Here, the second map is just the connective cover of \eqref{eq:splitting} (using that $\tau_{\geq 0}$ commutes with direct sums) and the first map is the direct sum of the maps $\Sigma^{2n_i}r \simeq \tau_{\geq 2n_i} \Sigma^{2n_i}R \to \tau_{\geq 0} \Sigma^{2n_i}R$. Since all negative homotopy of $R$ is in odd degrees, we see that $f$ is an isomorphism on even homotopy groups. Moreover, the source has only homotopy groups in even degrees. 
 
 Recall that we defined $\tmf_1(n)$ as a pullback 
 \[
  \xymatrix{
  \tmf_1(n)\ar[r]\ar[d] & H\Z  \ar[d] \\
  \tau_{\geq 0}\Tmf_1(n) \ar[r] & \tau_{[0,1]} \Tmf_1(n),
  }
 \]
where we still localize implicitly everywhere at $p$. This implies a fiber sequence
 $$\tmf_1(n) \to \tau_{\geq 0}\Tmf_1(n) \to \Sigma H\pi_1\Tmf_1(n).$$
 
 To factor $f$ over $\tmf_1(n)$, it is enough to show that $H^1(\Sigma^{2n_i}r; A) = 0$ with any coefficients $A$. This is clear anyhow for $n_i\geq 1$, so assume $n_i=0$. We know that $\tau_{[0,1]}r \simeq H\Z$ and we have $H^1(H\Z; A) \cong H^1(\mathbb{S};A) = 0$ (as the the cofiber of $\S \to H\Z$ is $1$-connected). 
 
 Now $\pi_*\tmf_1(n)$ is concentrated in even degrees and $\tmf_1(n) \to \tau_{\geq 0}\Tmf_1(n)$ induces a $\pi_*$-isomorphism in even degrees. In total, we see that $f$ induces an isomorphism on $\pi_*$.  
\end{proof}

\begin{remark}
    The condition that $\pi_1\Tmf_1(n) \cong H^1(\MMb_1(n); \omega)$ does not have $p$-torsion is actually necessary in the preceding theorem. One can indeed show that $\Tmf_1(n)$ can be recovered as $\tmf_1(n) \tensor_{\tmf}\Tmf$. Thus a $p$-local $\tmf$-linear splitting of $\tmf_1(n)$ into shifted copies of $r$ implies a $p$-local splitting of $\Tmf_1(n)$ into copies of $R$. As the latter has torsionfree homotopy groups, such a splitting can indeed only occur if the homotopy groups of $\Tmf_1(n)$ are $p$-torsionfree as well.  
\end{remark}

We now fix $p=2$ and are thus assuming that $\pi_1\Tmf_1(n) \cong H^1(\MMb_1(n); \omega)$ does not have $2$-torsion (this is e.g.\ true for all odd $2\leq n<65$ by \cite[Remark 3.14]{MeiDecMod}). In this setting we also want to prove connective versions of the $C_2$-equivariant refinement
\begin{equation}\label{eq:C2splitting}\Tmf_1(n)_{(2)} \simeq_{C_2} \bigoplus_i \Sigma^{n_i\rho}\Tmf_1(3)_{(2)}\end{equation}
 of \eqref{eq:splitting} given in \cite[Theorem 6.19]{MeiTopLevel}, where $\rho$ is the regular representation of $C_2$. We need the following lemma:

\begin{lemma}\label{lem:EM}
Let $A$ be an abelian group without $2$-torsion and denote by $\underline{A}$ the corresponding constant $C_2$-Mackey functor. Then $\pi_{-\sigma}^{C_2}H\underline{A} \cong A\tensor \Z/2$ and the map 
$$[H\underline{\Z}, \Sigma^\sigma H\underline{A}]^{C_2}\xrightarrow{\pi_0^{C_2}} A \tensor \Z/2$$
is an isomorphism. 
\end{lemma}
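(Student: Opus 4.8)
The plan is to compute $\pi_\star^{C_2}H\underline{A}$ in the relevant $RO(C_2)$-degrees directly from the cellular structure of $S^{-\sigma}$ and $S^{\sigma}$, exploiting that $A$ has no $2$-torsion. First I would recall the standard cofiber sequence $(C_2)_+ \to S^0 \xrightarrow{a_\sigma} S^\sigma$ and its desuspension $S^{-\sigma} \xrightarrow{a_\sigma} S^0 \to \Sigma^{-\sigma}(C_2)_+ \simeq \Sigma^{-1}(C_2)_+$ (using that the $C_2$-set $C_2$ has a free, hence trivializable, action on $\sigma$). Smashing with $H\underline{A}$ and taking $\pi_0^{C_2}$ gives an exact sequence
\[
\pi_1^{C_2}\bigl(\Sigma^{-1}(C_2)_+ \wedge H\underline{A}\bigr) \to \pi_{-\sigma}^{C_2}H\underline{A} \xrightarrow{a_\sigma} \pi_0^{C_2}H\underline{A} \to \pi_0^{C_2}\bigl(\Sigma^{-1}(C_2)_+\wedge H\underline{A}\bigr).
\]
By the Wirthmüller/adjunction identification $\pi_k^{C_2}((C_2)_+\wedge H\underline{A}) \cong \pi_k^e H\underline{A}$, which is $A$ in degree $0$ and vanishes otherwise, the outer terms reduce to $0$ and $A$, so $\pi_{-\sigma}^{C_2}H\underline{A}$ is the kernel of a map $\pi_0^{C_2}H\underline{A} = A \to A$; identifying this map as multiplication by $2$ (it is the composite $\res$ followed by $\tr$ up to sign, or directly the effect of $a_\sigma$ on the bottom cell) and using that $A$ is $2$-torsion-free shows the kernel is $0$. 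Hence I would instead run the sequence one degree up: $\pi_{-\sigma}^{C_2}H\underline{A}$ sits in $\pi_0^e H\underline{A} \xrightarrow{\tr} \pi_0^{C_2}H\underline{A} \to \pi_{-\sigma}^{C_2}H\underline{A} \to \pi_{-1}^e H\underline{A} = 0$, so $\pi_{-\sigma}^{C_2}H\underline{A} \cong \coker(\tr\colon A \to A)$, and for the constant Mackey functor $\underline{A}$ the transfer is multiplication by $2$, giving $\pi_{-\sigma}^{C_2}H\underline{A} \cong A/2 = A\otimes \Z/2$. (The $2$-torsion-freeness is exactly what is needed for the preceding $\pi_1$-term to vanish and make this identification clean.)

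For the second statement I would use the equivalence $\tau_{\leq 1}C\overline{\eta} \simeq H\underline{\Z}$ from \cref{lem:sliceHZ} together with the connectivity estimates already in the excerpt: the map $C(\overline{\eta}) \to C^{\S}(\overline{\eta})$ is slice-$2$-connected and $\tau_{\leq 1}C\overline{\eta}$ agrees with $\tau_{\leq 1}C^{\S}(\overline{\eta})$, so mapping out of $H\underline{\Z}$ into a $1$-slice-truncated target factors through the cofiber sequence $S^\sigma \xrightarrow{\overline{\eta}} S^0 \to C\overline{\eta}$ after truncation. Concretely, $\Sigma^\sigma H\underline{A}$ is $1$-coconnective in the ordinary (not slice) $t$-structure, so $[H\underline{\Z}, \Sigma^\sigma H\underline{A}]^{C_2} \cong [\tau_{\leq 1}^{(t)}C\overline{\eta}, \Sigma^\sigma H\underline{A}]^{C_2} \cong [C\overline{\eta}, \Sigma^\sigma H\underline{A}]^{C_2}$, and the cofiber sequence yields an exact sequence
\[
[S^0, \Sigma^{\sigma-1}H\underline{A}]^{C_2} \to [C\overline{\eta}, \Sigma^\sigma H\underline{A}]^{C_2} \to [S^0, \Sigma^\sigma H\underline{A}]^{C_2} \xrightarrow{\overline{\eta}^*} [S^\sigma, \Sigma^\sigma H\underline{A}]^{C_2}.
\]
The first group is $\pi_{1-\sigma}^{C_2}H\underline{A}$, which one checks vanishes for $A$ $2$-torsion-free (again via the $(C_2)_+$-cofiber sequence, since both $\pi_1^e$ and $\pi_0^e$ contributions collapse appropriately — here is where I would double-check the sign/torsion bookkeeping), the last map $\overline{\eta}^*\colon \pi_{-\sigma}^{C_2}H\underline{A} \cong A/2 \to \pi_0^{C_2}H\underline{A} = A$ is multiplication by $\eta$ followed by identification, which is zero since $\eta$ acts as zero on any $H\underline{A}$; so $[C\overline{\eta}, \Sigma^\sigma H\underline{A}]^{C_2} \xrightarrow{\sim} \pi_{-\sigma}^{C_2}H\underline{A} \cong A\otimes\Z/2$. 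Tracing through the identifications shows the composite is exactly restriction to the bottom cell $S^0 \subset C\overline{\eta}$, i.e. the map labeled $\pi_0^{C_2}$ in the statement, so it is the claimed isomorphism.

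The main obstacle I anticipate is the careful identification of the boundary/connecting maps as multiplication by $2$ (for the transfer on $\underline{A}$) and as zero (for $\eta$-multiplication), and in particular pinning down that the relevant $\pi_{1-\sigma}^{C_2}H\underline{A}$ term genuinely vanishes under the $2$-torsion-free hypothesis rather than merely being $2$-torsion; a clean way to do this is to note $H\underline{A} \simeq H\underline{\Z}\otimes_{\text{Burnside-level}} A$ is not available as stated, so instead I would either cite the standard computation of $\underline{\pi}_\star H\underline{\Z}$ in the region $|{\star}| \leq 1$ (e.g. from the tables in \cite{HHR} or \cite{H-M17}) and tensor with the flat/torsion-free $A$, or argue degreewise with the $(C_2)_+$-cofiber sequence exactly as in \cref{rem:fiber}. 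The rest is bookkeeping with exact sequences that the paper has already set up.
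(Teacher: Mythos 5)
Your first part and the overall skeleton of your second part coincide with the paper's proof: the same cofiber sequence $(C_2)_+ \to S^0 \to S^\sigma$ with the transfer of $\underline{A}$ identified as multiplication by $2$, and the same reduction of $[H\underline{\Z},\Sigma^\sigma H\underline{A}]^{C_2}$ to $[C\overline{\eta},\Sigma^\sigma H\underline{A}]^{C_2}$ followed by the long exact sequence of $S^\sigma\xrightarrow{\overline{\eta}}S^0\to C\overline{\eta}$. However, there is a genuine gap at the reduction step: you justify $[H\underline{\Z},\Sigma^\sigma H\underline{A}]^{C_2}\cong[C\overline{\eta},\Sigma^\sigma H\underline{A}]^{C_2}$ through the \emph{ordinary} $t$-structure, which implicitly requires that the Postnikov truncation $\tau_{\leq 1}^{(t)}C\overline{\eta}$ be $H\underline{\Z}$. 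But \cref{lem:sliceHZ} identifies the \emph{slice} $1$-truncation, and slice coconnectivity does not control $\underline{\pi}_1$: a slice $\geq 2$ spectrum can have nonzero $\underline{\pi}_1$ (for instance $\pi_1^{C_2}S^\rho=\pi_{-\sigma}^{C_2}\S\cong\Z$), so $\underline{\pi}_1 C\overline{\eta}=0$ is neither supplied by the cited lemma nor obvious --- indeed $\pi_1^{C_2}C\overline{\eta}$ is the cokernel of $\overline{\eta}\cdot\colon\pi_{1-\sigma}^{C_2}\S\to\pi_1^{C_2}\S$, and there is no reason to expect this to vanish. The correct (and the paper's) argument uses instead that $\Sigma^\sigma H\underline{A}\leq 1$ in the slice filtration, so that maps out of $C\overline{\eta}$ into it factor uniquely through the slice truncation $\tau_{\leq1}C\overline{\eta}\simeq H\underline{\Z}$; with this replacement the rest of your argument goes through.

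Two smaller points. In your four-term exact sequence the leftmost group should be $[\Sigma S^\sigma,\Sigma^\sigma H\underline{A}]^{C_2}=\pi_1^{C_2}H\underline{A}=0$, not $\pi_{1-\sigma}^{C_2}H\underline{A}$ (the triangle was rotated on the wrong cell); this is harmless here, since the group you wrote also vanishes for $2$-torsion-free $A$, but it shifts where the hypothesis appears to enter. Your argument that the last map $\pi_{-\sigma}^{C_2}H\underline{A}\to\pi_0^{C_2}H\underline{A}$ vanishes is fine once ``$\eta$'' is replaced by $\overline{\eta}$ and justified via $\pi_\sigma^{C_2}H\underline{\Z}=0$ together with the $H\underline{\Z}$-module structure of $\Sigma^\sigma H\underline{A}$; this is a legitimate alternative to the paper, which instead observes that any homomorphism from the $2$-torsion group $A\tensor\Z/2$ to the $2$-torsion-free group $A$ is zero. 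Finally, contrary to your parenthetical, the identification $\pi_{-\sigma}^{C_2}H\underline{A}\cong A\tensor\Z/2$ in the first part needs no torsion hypothesis at all; the hypothesis is only used in the second half.
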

\begin{proof}
	Smashing the fundamental cofiber sequence 
	\[(C_2)_+ \to S^0 \to S^{\sigma} \to \Sigma(C_2)_+\]
	 with $S^{-\sigma}$ and mapping out of it yields an exact sequence
	\[\pi_{-1}^eH\underline{A} \leftarrow \pi_{-\sigma}^{C_2}H\underline{A} \leftarrow \pi_{0}^{C_2}H\underline{A} \leftarrow  \pi_0^eH\underline{A}.\]
	The rightmost arrow can be identified with the transfer $\mathrm{tr} = 2\colon A \to A$ of the constant  Mackey functor, while $\pi_{-1}^eH\underline{A} = 0$. We obtain $\pi_{-\sigma}^{C_2}H\underline{A} \cong A\tensor \Z/2$ as claimed.  
	
	To finish the proof, we recall from \cref{sec:C2argument} that $\tau_{\leq 1}C\overline{\eta} \simeq H\underline{\Z}$. As $\Sigma^{\sigma}H\underline{A}\leq 1$ in the slice filtration, this implies that $[H\underline{\Z}, \Sigma^{\sigma}H\underline{A}]^{C_2} \cong [C\overline{\eta}, \Sigma^{\sigma} H\underline{A}]^{C_2}$. This sits in a long exact sequence
	\[0 = \pi_1^{C_2}H\underline{A} \to [C\overline{\eta}, \Sigma^{\sigma}H\underline{A}] \to \pi_{-\sigma}^{C_2}H\underline{A}\to \pi_0^{C_2}H\underline{A} = A.\]
	As $A$ does not have $2$-torsion and we have shown above that $\pi_{-\sigma}^{C_2}H\underline{A} \cong A\tensor \Z/2$, the result follows. 
\end{proof}

\begin{thm}\label{thm:splittingC2}
 Assuming that $n\geq 3$ is odd and $H^1(\MMb_1(n); \omega)$ does not have $2$-torsion, we have $2$-locally a $C_2$-equivariant splitting
 $$\tmf_1(n) \simeq \bigoplus_i \Sigma^{n_i\rho}\tmf_1(3).$$
\end{thm}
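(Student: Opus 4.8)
The plan is to imitate the proof of \cref{thm:splitting}, feeding in the $C_2$-equivariant splitting \eqref{eq:C2splitting} in place of \eqref{eq:splitting} and invoking \cref{lem:EM} at the single point where the non-equivariant argument genuinely breaks down. We work $2$-locally throughout. Since $n$ is odd, $\Gamma_1(n)$ and $\Gamma_1(3)$ are tame with respect to $\Z_{(2)}$, so by \cref{prop:tmf1(n)C2} both $\tmf_1(n)$ and $\tmf_1(3)$ are strongly even connective $E_\infty$-ring $C_2$-spectra; as $\pi_1\Tmf_1(3)=0$ (cf.\ \cref{sec:constr}), the construction collapses to $\tmf_1(3)=\tau_{\geq 0}\Tmf_1(3)$. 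Write $\underline{M}$ for the constant Mackey functor on $M:=\pi_1\Tmf_1(n)\cong H^1(\MMb_1(n);\omega)$, which is $2$-torsion free by hypothesis. From the pullback construction of \cref{prop:Killingpi1C2} one extracts (cf.\ \cref{rem:fiber}) a cofibre sequence of $C_2$-spectra
\[\tmf_1(n)\xrightarrow{\ g\ }\tau_{\geq 0}\Tmf_1(n)\xrightarrow{\ q\ }\Sigma^{\sigma}H\underline{M},\]
in which $g$ is a map of $E_\infty$-rings and the target is the first slice of $\tau_{\geq 0}\Tmf_1(n)$.

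First I would reproduce the construction from the proof of \cref{thm:splitting}: using that $\Sigma^{k\rho}$ shifts the slice filtration by $2k$, so that $\Sigma^{n_i\rho}\tmf_1(3)=\tau_{\geq 2n_i}(\Sigma^{n_i\rho}\Tmf_1(3))$, build
\[f\colon\bigoplus_i\Sigma^{n_i\rho}\tmf_1(3)\longrightarrow\bigoplus_i\tau_{\geq 0}\bigl(\Sigma^{n_i\rho}\Tmf_1(3)\bigr)\xrightarrow{\ \sim\ }\tau_{\geq 0}\Tmf_1(n),\]
where the first map is the sum of the canonical slice-connective-cover maps and the second is $\tau_{\geq 0}$ of \eqref{eq:C2splitting} (all $n_i\geq 0$, as in \eqref{eq:splitting}). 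Because $M_*(\Gamma_1(n);\Z_{(2)})$ is free of rank one in weight $0$ over $M_*(\Gamma_1(3);\Z_{(2)})$, exactly one index has $n_i=0$; I would take the splitting so that this unshifted summand is the unit submodule $\tmf_1(3)\cdot 1$, so that its inclusion $\iota\colon\tmf_1(3)\hookrightarrow\tau_{\geq 0}\Tmf_1(n)$ factors the unit of $\tau_{\geq 0}\Tmf_1(n)$.

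The heart of the argument — and the step I expect to be the main obstacle — is to lift $f$ through $g$. The obstruction lies in $\bigl[\bigoplus_i\Sigma^{n_i\rho}\tmf_1(3),\,\Sigma^\sigma H\underline{M}\bigr]^{C_2}$. On each summand with $n_i\geq 1$ this vanishes, since $\Sigma^{n_i\rho}\tmf_1(3)$ is slice $\geq 2$ while $\Sigma^\sigma H\underline{M}$ is a $1$-slice, and there are no maps from a slice-$\geq 2$ object to a slice-$\leq 1$ object. On the unshifted summand, however — and this is exactly where the non-equivariant proof of \cref{thm:splitting} fails to transcribe — one has $[\tmf_1(3),\Sigma^\sigma H\underline{M}]^{C_2}\cong[\tau_{\leq 1}\tmf_1(3),\Sigma^\sigma H\underline{M}]^{C_2}=[H\underline{\Z},\Sigma^\sigma H\underline{M}]^{C_2}$ (using $\tau_{\leq 1}\tmf_1(3)=H\underline{\Z}$ by strong-evenness), and by \cref{lem:EM} this equals $M\otimes\Z/2$, which is in general nonzero. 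The resolution is that \cref{lem:EM} also tells us this group is detected by the induced map on $\pi_0^{C_2}$; so it is enough to show $q\circ\iota$ induces zero on $\pi_0^{C_2}$. But $\iota$ carries the unit of $\pi_0^{C_2}\tmf_1(3)$ to the unit of $\pi_0^{C_2}\tau_{\geq 0}\Tmf_1(n)$, which lies in the image of $g_*$ since $g$ is a ring map, and $q_*\circ g_*=0$ because $q\circ g$ is null. Hence $q\circ f=0$, and $f$ lifts to some $\tilde f\colon\bigoplus_i\Sigma^{n_i\rho}\tmf_1(3)\to\tmf_1(n)$.

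It remains to see $\tilde f$ is an equivalence. Its underlying map $\tilde f_e$ is a lift through $g_e$ of the non-equivariant map of \cref{thm:splitting}; any two such lifts differ by a class in $[\tmf_1(3)_e,HM]^e=H^0(\tmf_1(3)_e;M)=M$ (the shifted summands contributing nothing for degree reasons), realised by a map through the fibre inclusion $HM\to\tmf_1(n)_e$ of $g_e$, and this inclusion is trivial on all homotopy groups — in positive degrees because $HM$ is concentrated in degree $0$, and in degree $0$ because $\pi_0 HM\to\pi_0\tmf_1(n)_e$ is the cokernel of the connecting isomorphism $\pi_1\tau_{\geq 0}\Tmf_1(n)_e\xrightarrow{\sim}\pi_0 HM$. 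Since \cref{thm:splitting} exhibits one lift of $f_e$ that is an equivalence, $\tilde f_e$ is an isomorphism on all homotopy groups, hence an equivalence. As source and target of $\tilde f$ are strongly even, their $\underline{\pi}_{k\rho}$ are constant Mackey functors, so $\tilde f$ induces isomorphisms on $\underline{\pi}_{k\rho}$ for all $k$. The cofibre $C$ of $\tilde f$ is then connective, has contractible underlying spectrum, and satisfies $\underline{\pi}_{k\rho}C=0$ for all $k\geq 0$ (using $\underline{\pi}_{k\rho-1}=0$ on source and target); since $C_e\simeq 0$, $C$ is concentrated over the fixed point, $a_\sigma$ acts invertibly on $\pi^{C_2}_\star C$, and so $\pi^{C_2}_{k+j\sigma}C$ is independent of $j$ for each integer $k$ — vanishing for $k<0$ by connectivity and for $k\geq 0$ because $\pi^{C_2}_{k\rho}C=0$. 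Hence $C^{C_2}\simeq 0$, which together with $C_e\simeq 0$ forces $C\simeq 0$, so $\tilde f$ is an equivalence.
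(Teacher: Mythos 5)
Your overall strategy is the same as the paper's (lift $\tau_{\geq 0}$ of the splitting \eqref{eq:C2splitting} through the fibre sequence $\tmf_1(n)\to\tau_{\geq 0}\Tmf_1(n)\to\Sigma^{\sigma}H\underline{M}$, kill the obstruction on the shifted summands by slice reasons and on the unshifted summand via \cref{lem:EM}, then upgrade an underlying equivalence using strong evenness), but there is a genuine gap at the one place where the equivariant statement requires new work. You assert, with no argument beyond ``exactly one index has $n_i=0$'', that the splitting \eqref{eq:C2splitting} can be ``taken'' so that the unshifted summand is the unit submodule $\tmf_1(3)\cdot 1$. The splitting of \cite[Theorem 6.19]{MeiTopLevel} is merely a $\Tmf_1(3)$-module equivalence $\Phi$; it carries the unit of the bottom summand to some class $e_0\in\pi_0^{C_2}\Tmf_1(n)$ about which nothing is known a priori, and by \cref{lem:EM} the obstruction on the unshifted summand is exactly the image $r(e_0)\in H^1(\MMb_1(n);\omega)\otimes\Z/2$ under the sequence \eqref{eq:pi0sequence}, which has no reason to vanish for the given $\Phi$. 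Your entire vanishing argument (``$\iota$ carries the unit to the unit, which is in the image of $g_*$'') rests on the unproved normalization, so as written the proof is incomplete precisely where the paper's proof does its real work: the paper replaces $e_0$ by a class $x_0$ in the image of $\pi_0^{C_2}\tmf_1(n)\to\pi_0^{C_2}\Tmf_1(n)$ with the same underlying restriction, and checks that the modified module map $\Phi'$ is still an equivalence. The gap is fillable by essentially that argument (indeed one can take $x_0$ to be the unit itself: $\res_e(e_0)$ is a generator of $\pi_0^e\Tmf_1(n)\cong\Z_{(2)}$ for degree reasons, so replacing it by $1$ changes the underlying map by a unit scalar on the bottom summand, and an underlying equivalence between these cofree $C_2$-spectra is an equivalence), but this verification must be supplied; it is not a consequence of the freeness of $M_*(\Gamma_1(n);\Z_{(2)})$ in weight $0$.

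Two smaller points. First, in the last step you say $\tilde f_e$ is a lift of ``the non-equivariant map of \cref{thm:splitting}''; it is in fact a lift of the underlying map of the equivariant $f$, which is built from \eqref{eq:C2splitting} and need not coincide with the splitting \eqref{eq:splitting} used in \cref{thm:splitting}. This is harmless: since source and target have homotopy concentrated in even degrees and $g_e$ is an isomorphism on even homotopy, any lift of $f_e$ is automatically a $\pi_*$-isomorphism, so you can argue directly as in the proof of \cref{thm:splitting} without comparing lifts. Second, your concluding argument that an underlying equivalence between strongly even $C_2$-spectra is an equivalence (via $a_\sigma$-invertibility on the cofibre) is correct, though it amounts to reproving \cite[Lemma 3.4]{H-M17}, which the paper simply cites.
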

\begin{proof}
We localize everywhere implicitly at $2$ and consider the map 
$$\bigoplus_i  \Sigma^{n_i\rho}\tmf_1(3) \to \bigoplus_i \tau_{\geq 0}\Sigma^{n_i\rho} \Tmf_1(3) \xrightarrow{\tau_{\geq 0}\Phi} \tau_{\geq 0}\Tmf_1(n),$$
for a chosen $C_2$-equivalence $\Phi$ between $\bigoplus_i \Sigma^{n_i\rho} \Tmf_1(3)$ and $\Tmf_1(n)$. 
We have a fiber sequence
$$\tmf_1(n) \to \tau_{\geq 0}\Tmf_1(n) \to \Sigma^\sigma H\underline{A},$$
where $A= H^1(\MMb_1(n);\omega)$ since $\Sigma^\sigma H\underline{A}$ is the $1$-slice of $\Tmf_1(n)$ by \cite[Theorem 6.16]{MeiTopLevel}. On $\pi_0^{C_2}$ this induces (using \cref{lem:EM}) a short exact sequence
\begin{equation}\label{eq:pi0sequence}0 \to \Z \to \pi_0^{C_2}\Tmf_1(n) \xrightarrow{r} A \tensor \Z/2 \to 0.\end{equation}
The composite $\bigoplus  \Sigma^{n_i\rho}\tmf_1(3)  \to \Sigma^\sigma H\underline{A}$ factors over the $1$-slice coconnective cover of the source, which agrees with $H\underline{\Z}$ since there is precisely one $n_i$ equalling $0$ (by considering non-equivariant homotopy groups). Using \cref{lem:EM} again, the resulting map $H\underline{\Z} \to \Sigma^\sigma H\underline{A}$ is null iff the image $r(\Phi(1))$ of $\Phi(1)$ in $A\tensor \Z/2$ is $0$. 

We want to show that we can change $\Phi$ so that this is true. Using $\Phi$, the $C_2$-spectrum $\Tmf_1(n)$ gets the structure of a $\Tmf_1(3)$-module. Thus, $\Tmf_1(3)$-module maps $$\bigoplus_{i=0}^{N} \Sigma^{n_i\rho}\Tmf_1(3) \to \Tmf_1(n)$$ correspond to a sequence of classes $x_i \in \pi_{n_i\rho}^{C_2}\Tmf_1(n)$ by considering the images of $1\in \pi_{n_i\rho}^{C_2}\Sigma^{n_i\rho}\Tmf_1(3)$. Denote the sequence corresponding to $\Phi$ by $e_0, \dots, e_N$. By possibly reordering, we can assume $n_0= 0$. We construct a new map $\Phi'\colon \bigoplus_{i=0}^{N} \Sigma^{n_i\rho}\Tmf_1(3) \to \Tmf_1(n)$ corresponding to $x_0, x_1, \dots, x_N$ with $x_i = e_i$ for $i>0$ and $x_0$ corresponding to the image of $u\in\Z$ in \eqref{eq:pi0sequence}, where $u$ maps to $\res_{e}^{C_2}(e_0)$ along the isomorphism $\Z \cong \pi_0^e\tmf_1(n) \to \pi_0^e\Tmf_1(n)$. As $\Phi'$ and $\Phi$ induce the same map on underlying homotopy groups, the map $\Phi'$ is an equivalence. By construction, $r(x_0) = 0$. 

Thus the map 
\[\bigoplus_i\Sigma^{n_i\rho}\tmf_1(3) \to \bigoplus_i \tau_{\geq 0}\Sigma^{n_i\rho}\tmf_1(3)\xrightarrow{\tau_{\geq 0}\Phi'} \tau_{\geq 0} \Tmf_1(n)\] 
factors indeed over $\tmf_1(n)$. As before, the map $\Sigma^{n_i\rho}\tmf_1(3) \to \tmf_1(n)$ induces an isomorphism on underlying homotopy groups. Both source and target are strongly even and thus the map is a $C_2$-equivariant equivalence by \cite[Lemma 3.4]{H-M17}.
\end{proof}

\bibliographystyle{alpha}
\bibliography{chromatic}
\end{document}